\newtheorem{proposition}{Proposition}
\newtheorem{lemma}[proposition]{Lemma}
\newtheorem{theorem}{Theorem}
\newtheorem*{lwp1}{Cauchy Theory in $H^1\times L^2$}
\newtheorem*{lwp2}{Cauchy Theory in $H^s\times H^{s-1}$}
\newtheorem*{FPS}{Finite Propagation Speed}
\newtheorem*{BA}{Bootstrap Assumption}
\newtheorem*{keywords}{Keywords}
\newtheorem*{MSC2010}{MSC2010}
\newtheorem*{ack}{Acknowledgments}
\theoremstyle{remark}
\newtheorem{remark}{Remark}
\DeclarePairedDelimiter{\norm}{\lVert}{\rVert}
\DeclareMathOperator{\Span}{Span}
\newcommand{\dual}[2]{\left\langle #1,#2\right\rangle}
\newcommand{ \psld}[2]{\left(#1,#2\right)_2}
\newcommand{\R}{\mathbb R}
\renewcommand{\Re}{\operatorname{Re}}
 \renewcommand{\Im}{\operatorname{Im}}
\begin{document}

\title{Multi-solitary waves \\for the nonlinear Klein-Gordon equation}

\author{ Jacopo Bellazzini \thanks{ Universit\`a degli Studi di Sassari, Via
Piandanna 4, Sassari, ITALY. e-mail: \texttt{jbellazzini@uniss.it}}   \and  Marco Ghimenti \thanks{
Dipartimento di Matematica, Universit\`a di Pisa, Largo Bruno Pontecorvo 5, Pisa, ITALY. e-mail: \texttt{ghimenti@mail.dm.unipi.it}} \and  Stefan Le Coz \thanks{Institut de Math\'ematiques de Toulouse, Universit\'e Paul Sabatier 118 route de Narbonne, 31062 Toulouse Cedex 9 FRANCE. e-mail: \texttt{slecoz@math.univ-toulouse.fr}}}

\maketitle

\begin{abstract}
\noindent We consider the nonlinear Klein-Gordon equation in $\R^d$. We call multi-solitary waves a solution behaving at large time as a sum of boosted standing waves. Our main result is the existence of such multi-solitary waves, provided the composing boosted standing waves are stable. It is obtained by solving the equation backward in time around a sequence of approximate multi-solitary waves and showing convergence to a solution with the desired property. The main ingredients of the proof are finite speed of propagation, variational characterizations of the profiles, modulation theory and energy estimates.
\end{abstract}

\begin{keywords}
Klein-Gordon equation, multi-soliton, asymptotic behavior
\end{keywords}

\begin{MSC2010}
35Q51, 35C08, 35Q40, 35L71, 37K40.
\end{MSC2010}

\section{Introduction}


 We consider the following nonlinear Klein-Gordon equation 
\begin{equation}\label{eq:nlkg}\tag{\textsc{nlkg}}
u_{tt}-\Delta u+mu-|u|^{p-1}u=0
\end{equation}
where $u:\mathbb{R}\times\mathbb{R}^{d}\rightarrow\mathbb{C}$, $m\in (0,+\infty)$, and the nonlinearity is $H^1$-subcritical, i.e. $1<p<1+\frac{4}{d-2}$ if $d\geq 3$ or $1<p<+\infty$ if $d=1,2$.

This equation arises in particular in  Quantum Physics  where it has been proposed as a simple model describing a self interacting scalar field. 
Mathematically speaking, the Klein-Gordon equation is one of the model dispersive equations. It is a Hamiltonian equation which is invariant under gauge and Lorentz transform and in particular it conserves energy, charge and momentum.  Due to the sign of the nonlinearity, the equation is focusing. At the balance  between dispersion and focusing, we find ``truly'' nonlinear solutions: the stationary/standing/solitary  waves. 

A \emph{standing wave} with frequency $\omega\in\mathbb R$ is a solution of~\eqref{eq:nlkg} of the form $u(t,x)=e^{i\omega t}\varphi_\omega(x)$. Such solution has the particularity to exist globally  and to remain localized at any time.  In the physics literature this kind of solutions are sometimes referred to as \emph{Q-balls}.
A \emph{soliton} (or \emph{solitary wave})  with speed $v\in\R^d$, frequency $\omega\in\mathbb R$ and initial phase and position $\theta\in\R$, $x_0\in\R^d$ is a  boosted standing wave solution of~\eqref{eq:nlkg}.
More precisely  a soliton is a  solution of~\eqref{eq:nlkg} of the form 
\[
e^{i\frac{\omega}{\gamma}t+i\theta}\varphi_{\omega,v}(x-vt-x_0),
\]
 where $\varphi_{\omega,v}$ is a  profile depending on $\omega$ and $v$, and  $\gamma:=\frac{1}{\sqrt{1-|v|^2}}$ is the Lorentz factor.

We shall consider ground states solitons, i.e boosted standing waves with ground states profiles (profiles minimizing a certain action functional, see Section~\ref{sec:preliminaries} for a more precise definition). The orbital stability properties of such solitons have been widely studied.
It started with the work of Shatah~\cite{Sh83} where it was shown that there exists a critical frequency $\omega_c>0$ such that if $p<1+4/d$ and  $\omega_c<|\omega|<m$ then standing waves are stable under radial perturbation. Later on, Shatah~\cite{Sh85} proved that the stationary solution (i.e. the standing wave with $\omega=0$) is strongly unstable and the picture for standing waves was completed by Shatah and Strauss~\cite{ShSt85} when they showed that if either $p\geq1+4/d$ or if $|\omega|<\omega_c$ then standing waves are unstable. These results were generalized and consolidated by Grillakis, Shatah and Strauss in their celebrated works~\cite{GrShSt87,GrShSt90}.  
The stability theory of solitons was revisited by Stuart~\cite{St01} via the modulational approach introduced by Weinstein~\cite{We85} for nonlinear Schr\"odinger equations. Compare to prior results,  Stuart~\cite{St01} provided two improvements: first, he treated the whole range of possible speeds $|v|<1$ without the radiality assumptions, second he gave the laws of the modulations parameters. In particular, it was shown in~\cite{St01} that the ground state solitons are stable if the parameters are within the following open set
\begin{equation}\label{eq:O-stab}
\mathcal O_{\mathrm{stab}}:=\Bigg\{
(\omega,\theta,v,x)\in\R^{2+2d};\;
|\omega|<\sqrt{m},\,|v|<1,\,\frac{1}{1+\frac{4}{p-1}-d}<\frac{\omega^2}{m}
\Bigg \}.
\end{equation}
Note that $\mathcal O_{\mathrm{stab}}$ is nonempty only if $p<1+\frac{4}{d}$, i.e. the nonlinearity is $L^2$-subcritical. 
Instability was further investigated by Liu, Ohta and Todorova~\cite{LiOhTo07,OhTo05,OhTo07} (see also~\cite{JeLe09} for a companion result), who proved that when standing waves are unstable, then the instability is either strong (i.e. by blow up in possibly infinite time) when $p<1+4/d$ or very strong (i.e. by blow up in finite time) when $p\geq1+4/d$. 

Recently,  
further informations on the dynamics of \eqref{eq:nlkg} around solitons have been obtained by Nakanishi and Schlag.
In~\cite{NaSc12-2}, using a method refered to as Hadamard approach in dynamical systems, they show the existence of a center-stable manifold which contains all solutions of \eqref{eq:nlkg}  staying close to the solitons manifold and describe precisely this manifold. Furthermore, in~\cite{NaSc11-2}, they adopt a Lyapunov-Perron approach for the study of the dynamics around ground state stationary solitons of \eqref{eq:nlkg} for the 3-d cubic case and in a radial setting. In particular, they show the existence of a center stable manifold such that the following trichotomy occurs for a solution with inital data close to the ground state stationary solution. On one side of the center stable manifold, the solution scatters to $0$, on the other side it blows up in finite time and on the center stable manifold itself the solution scatters to the ground state. The same authors~\cite{NaSc12-1} extended later their results in the non-radial setting.
 One can also refer to their monograph~\cite{NaSc11} for a complete introduction to the mathematical study of equations similar to \eqref{eq:nlkg}, in particular the study of the dynamics of the equation around stationary/standing waves.

In this paper we address the question whether it is possible to construct a multi-soliton solution for~\eqref{eq:nlkg}, i.e. a solution behaving at large time like a sum of solitons. Multi-solitons are long time known to exist for integrable equations such as the  Korteweg-de Vries equation or the 1-d cubic nonlinear Schr\"odinger equation. Indeed, existence of multi-solitons follows from the inverse scattering transform,  see e.g. the survey of Miura~\cite{Mi76} for the Korteweg-de Vries equation and the work of Zakharov and Shabat~\cite{ZaSh72} for the 1-d cubic nonlinear Schr\"odinger equation. In the recent years, there has been a series of works around the existence and dynamical properties of multi-solitons for various dispersive equations. 

One of the first existence result of multi-solitons for non-integrable equations was obtained by Merle~\cite{Me90} as a by-product of the construction of a multiple blow-up points solution to the $L^2$-critical nonlinear Schr\"odinger equation (indeed a pseudo-conformal transform of this solution gives the multi-soliton). Later on, Perelman~\cite{Pe97,Pe04}  (see also \cite{RoScSo03}) studied asymptotic stability of a sum of solitons of nonlinear Schr\"odinger equation under spectral hypotheses and in weighted spaces. In the energy space, Martel, Merle and Tsai~\cite{MaMe06,MaMeTs06} showed the existence and orbital stability of multi-solitons made of stable solitons. The existence of multi-solitons made of unstable solitons was obtained by C\^ote, Martel and Merle~\cite{CoMaMe11} for ground state and by C\^ote and Le Coz~\cite{CoLe11} for excited states under a high speed assumption. Further results on the existence of exotic solutions like a train of infinitely many solitons were obtained by Le Coz, Li and Tsai \cite{LeLiTs13}. 

For the non-integrable generalized Korteweg-de Vries equation, Martel~\cite{Ma05-1} showed the existence and uniqueness of multi-solitons for $L^2$-subcritical nonlinearities. These multi-solitons were shown to be stable and asymptotically stable by Martel, Merle and Tsai~\cite{MaMeTs02}. Combet~\cite{Co11} investigated further the existence of multi-solitons in the supercritical case and showed the existence and uniqueness of a $N$-parameter family of multi-solitons. Outstanding results on the description of the interaction between two solitons were recently obtained by Martel and Merle~\cite{MaMe11-2,MaMe11-3,MaMe11}.

Despite the many works  on multi-solitons previously cited, to our knowledge the present paper and the recent preprint~\cite{CoMu12} are the first works dealing with existence of multi-soliton type solutions for nonlinear Klein-Gordon equations (see nevertheless~\cite{CoZa13} for related results on the nonlinear wave equation).

Our goal is to prove the following existence result for multi-solitons.
\begin{theorem}\label{thm:1}
Assume that $1<p<1+\frac{4}{d}$.
For any  $N\in\mathbb N$, take 
$(\omega_j,\theta_j,v_j,x_j)_{j=1,\dots, N}\subset\mathcal O_{\mathrm{stab}}$ and let $(\varphi_j)$ be the associated ground state profiles $\varphi_j:=\varphi_{\omega_j,v_j}$, and $(\gamma_j)$ the  Lorentz factors $\gamma_j:=(1-|v_j|^2)^{-\frac{1}{2}}$. Denote the corresponding solitons by 
\[
\mathcal R_j(t,x):=e^{i\frac{\omega_j}{\gamma_j}t+i\theta_j}\varphi_j(x-v_jt-x_j).
\]
Define
\vspace{-5pt}
\begin{align}
v_\star&:=\min\{|v_j-v_k|,j,k=1,\dots ,N,j\neq k\}&&\text{({minimal relative speed})}\label{eq:v-star}\\
\omega_\star&:=\max\{|\omega_j|,j=1,\dots ,N\}&&\text{({maximal frequency})}\label{eq:omega-star}
\end{align}
There exists $\alpha=\alpha(d,N)>0$, such that if $v_j\neq v_k$ for any $j\neq k$,  then there exist $T_0\in\R$ and a solution $u$ to~\eqref{eq:nlkg} existing on $[T_0,+\infty)$ and such that  for all $t\in[T_0,+\infty)$  the following estimate holds
\[
\norm[\bigg]{u(t)-\sum_{j=1}^N\mathcal R_j(t)}_{H^1}+\norm[\bigg]{\partial_tu(t)-\sum_{j=1}^N\partial_t\mathcal R_j(t)}_{L^2}\leq e^{-\alpha\sqrt{m-\omega^2_\star}v_\star t}.
\]
\end{theorem}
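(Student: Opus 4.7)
My plan is to follow the classical backward-in-time compactness strategy of Martel-Merle-Tsai. Write $R(t) := \sum_{j=1}^N \mathcal{R}_j(t)$, pick an increasing sequence $T_n \to +\infty$, and let $u_n$ be the solution of \eqref{eq:nlkg} with final data $(u_n, \partial_t u_n)(T_n) = (R, \partial_t R)(T_n)$. The aim is to establish a uniform estimate
\[
\|u_n(t) - R(t)\|_{H^1} + \|\partial_t u_n(t) - \partial_t R(t)\|_{L^2} \leq e^{-\alpha \sqrt{m-\omega_\star^2}\, v_\star t}
\]
on some interval $[T_0, T_n]$ with $T_0$ independent of $n$, and then extract a weak limit of $(u_n, \partial_t u_n)(T_0)$ along a subsequence. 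The limiting data generates by local well-posedness in $H^1\times L^2$ a solution $u$ of \eqref{eq:nlkg} on $[T_0, +\infty)$, and the uniform estimate transfers to $u$ thanks to weak lower semicontinuity of the $H^1\times L^2$ norm.

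The core step is a bootstrap argument on this uniform estimate. First I would introduce modulation: write $u_n(t) = \sum_{j=1}^N \tilde{\mathcal{R}}_j(t) + \eta(t)$, where each $\tilde{\mathcal{R}}_j$ is a soliton with slowly varying parameters $(\tilde\omega_j, \tilde\theta_j, \tilde v_j, \tilde x_j)(t)$ chosen so that $\eta$ and $\partial_t \eta$ are orthogonal to the symmetry directions (gauge, translation, boost, frequency) at each $\tilde{\mathcal{R}}_j$. Modulation produces ODEs for these parameters whose right-hand side is controlled by $\|\eta\|_{H^1} + \|\partial_t\eta\|_{L^2}$ plus soliton-interaction remainders. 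Next, using cutoff functions $\chi_j$ transported at speeds chosen strictly between the relative velocities $|v_j - v_k|$, I would build a Lyapunov functional of the form
\[
\mathcal{F}_n(t) := \sum_{j=1}^N \Bigl( E_j - v_j \cdot P_j + \tfrac{\omega_j}{\gamma_j} Q_j \Bigr)(u_n, \partial_t u_n)(t),
\]
where $E_j, P_j, Q_j$ denote the energy, momentum, and charge localized against $\chi_j$. The stability assumption $(\omega_j, \theta_j, v_j, x_j) \in \mathcal{O}_{\mathrm{stab}}$ furnishes the coercivity of the second variation of the action at each $\mathcal{R}_j$ on the complement of the symmetry directions, and through localization and summation delivers a bound
\[
\|\eta(t)\|_{H^1}^2 + \|\partial_t \eta(t)\|_{L^2}^2 \lesssim \mathcal{F}_n(t) - \mathcal{F}_n(T_n) + (\text{small modulation corrections}).
\]
Differentiating $\mathcal{F}_n$ in time, the contributions that survive come from the cutoffs $\chi_j$ and from cross-interaction integrals, both exponentially small because the solitons are spatially separated by $\gtrsim v_\star t$ and the ground states decay like $e^{-\sqrt{m-\omega_j^2}\,|x|}$ (with $\sqrt{m-\omega_\star^2}$ being the slowest rate). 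This should give $|\tfrac{d}{dt}\mathcal{F}_n(t)| \lesssim e^{-2\alpha \sqrt{m-\omega_\star^2}\, v_\star t}$, which integrated backward from $T_n$ closes the bootstrap.

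The hardest part, compared to the Schr\"odinger or KdV settings, is designing the localized Lyapunov functional so that it simultaneously respects the Lorentz-boosted geometry of each soliton and preserves the delicate cancellations producing coercivity at each $\mathcal{R}_j$. The second-order-in-time nature of \eqref{eq:nlkg} forces the functional to control momentum density in addition to potential energy, thereby entangling energy, charge, and momentum through the Lorentz factors $\gamma_j$; one must check that the cutoff-induced errors respect this entanglement so that coercivity is retained. Controlling the motion of the modulation parameters along the dynamics, and showing that the orthogonality conditions remain compatible with the cutoffs, is another point requiring care. Once the bootstrap is closed, the sequence $(u_n(T_0), \partial_t u_n(T_0))$ is bounded in $H^1\times L^2$; a diagonal extraction together with Rellich-type compactness (to pass to the limit in the nonlinearity locally in space) yields the desired initial data, and the solution it generates inherits the exponential convergence to $R$ claimed in Theorem~\ref{thm:1}.
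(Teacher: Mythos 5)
Your overall strategy coincides with the paper's: backward resolution from final data $R(T^n)$, modulation to enforce orthogonality, a localized action $\sum_j\bigl(E_j+\frac{\omega_j}{\gamma_j}Q_j+v_j\cdot P_j\bigr)$ that is almost conserved and coercive on the orthogonal complement, a bootstrap yielding uniform estimates on $[T_0,T^n]$, and a compactness step. (Two small remarks: with the paper's conventions the profile is a critical point of $E+\frac{\omega}{\gamma}Q+v\cdot P$, so the momentum term should enter your Lyapunov functional with a plus sign; and the paper modulates only phases, frequencies and translations --- not velocities --- with three orthogonality conditions per soliton, the one against $iJ\tilde R_j$ being what neutralizes the single negative direction of $S''$.)

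The genuine gap is in the final limiting argument. You extract a weak $H^1\times L^2$ limit $U_0$ of $(u_n,\partial_t u_n)(T_0)$ and assert that the uniform estimate transfers to the solution $u$ emanating from $U_0$ ``by weak lower semicontinuity''. But weak lower semicontinuity applies at a time $t$ only once you know $(u_n,\partial_t u_n)(t)\rightharpoonup (u,\partial_t u)(t)$ for every $t\geq T_0$, and weak convergence of the data at $T_0$ does not by itself propagate along the flow; nor does local Rellich compactness suffice, since it gives strong convergence only on bounded sets and leaves open both the identification of the limiting curve with the solution generated by $U_0$ and the possibility of mass escaping to spatial infinity. The paper closes this with a quantitative tightness statement: finite speed of propagation combined with the uniform estimates shows that $(u_n,\partial_t u_n)(T_0)$ has uniformly small tails (Lemma~\ref{lem:compact}), which upgrades the weak convergence to strong convergence in $H^s\times H^{s-1}$, $s<1$, globally in space; the Cauchy theory in $H^s\times H^{s-1}$ with continuous dependence then propagates this strong convergence to every later time, and only after that do boundedness in $H^1\times L^2$, uniqueness of limits, and weak lower semicontinuity give the estimate for $u$. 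This use of finite propagation speed is exactly the paper's replacement for the virial-type arguments of the Schr\"odinger case, and your sketch needs it (or an equivalent tightness argument) to be complete.
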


\begin{remark}
During the preparation of this paper we have been aware of the work~\cite{CoMu12} by C\^ote and Mu\~noz. Our two results are companions in the following sense. In~\cite{CoMu12} the authors  used spectral theory and a topological argument to prove the existence of  multi-solitons made of unstable solitons. To the contrary, we use finite speed of propagation, classical modulation theory and energy estimates to obtain the existence of multi-solitary waves based on stable solitons. Merging our results together would give the existence of multi-solitons made with any kind (stable or unstable) of solitons. 
\end{remark}


Our strategy for the proof of Theorem~\ref{thm:1} is to slove  \eqref{eq:nlkg} backwards around suitable approximate solutions. It is inspired by the works of Martel, Merle and Tsai  on multi-solitons of Schr\"odinger equations~\cite{MaMe06, MaMeTs06} (see also~\cite{CoLe11,CoMaMe11} where similar strategies were enforced). The main new ingredients on which we rely are the variational characterizations of the profile and a coercivity property of the total linearized action.

We start  by introducing the mathematical framework in which we are going to work in Section~\ref{sec:preliminaries}. After transforming~\eqref{eq:nlkg} into its Hamiltonian form~\eqref{eq:nlkg2}, we list the tools which are going to be useful for our purposes: Cauchy Theory in $H^1\times L^2$ and $H^s\times H^{s-1}$, Conservation laws,  Finite Propagation Speed, standing waves, Lorentz transform and finally definitions of solitons and their profiles.

Then we go on with the core of the proof of Theorem~\ref{thm:1}. We consider a sequence of times $T^n\rightarrow \infty$, a set of final data $u_n=\sum\mathcal R_j(T^n)$ and the associated solutions $(u_n)$ of~\eqref{eq:nlkg} backward in time. The sequence $(u_n)$ provides us with a sequence of approximate multi-solitons, and we need to prove its convergence  to a solution of~\eqref{eq:nlkg} satisfying to the conclusion of Theorem~\ref{thm:1}. For this purpose, we show that  each $u_n$ exists backwards in time up to some time $T_0$ independent of $n$ and decay uniformly in $n$ to the sum of solitons (Proposition~\ref{prop:uniform}). Eventually a compactness argument (Lemma~\ref{lem:compact}) permits to show that $(u_n)$ converges to a multi-soliton of~\eqref{eq:nlkg} on $[T_0, \infty)$. Most steps are performed in Section~\ref{sec:proof}, apart from  uniform estimates whose proof needs more preparation.

The proof of the uniform estimates relies on several ingredients:  coercivity of the Hessian of the action around each component of the multi-soliton, modulation theory and slow variation of localized conservation laws, energy, charge, momenta.

We study the profiles of the solitons in Section~\ref{sec:variational}.
We characterize the profiles variationally using the conserved quantities of~\eqref{eq:nlkg} (Proposition~\ref{prop:variational}). 
and show that 
the ground state profiles are at the mountain pass level, the least energy level and the Nehari level. 
Our proofs are self-contained and do not rely on the (\textsc{nls}) case. 

After obtaining the variational characterizations, we prove a coercivity property (Lemma~\ref{lem:coercivity}) for the second variation of the action functional around a soliton (linearized action). To this aim, we study the spectrum of the linearized action and prove in particular the non degeneracy of the kernel (i.e. the kernel contains only the eigenvectors generated by the invariances of the equation, see Lemma~\ref{lem:kernel}). It is usually a crucial point in these matters
We underline that the coercivity properties are related to the fact that our standing waves are stable.

Coercivity of the linearized action is obtained provided orthogonality conditions hold. This prompt the question of obtaining orthogonality conditions around a sum of solitons, which is resolved in Section~\ref{sec:modulation} via modulation theory. The modulation result is twofold. First, it shows that close to a sum of solitons one can recover orthogonality conditions (see~\eqref{eq:ortho-dyn}) by adjusting the modulation parameters phases, translation and scaling. Second, it gives the dynamical laws followed by the  parameters (see~\eqref{eq:parameters-derivatives-estimate}). 

Finally, we define cutoff functions around each soliton and use them to localize the action around each soliton. We use these localized actions to build a global action adapted to the sum of solitons. Several properties are transported from the local actions to the global one. In particular, the global action inherits from the coercivity (see Lemma~\ref{lem:multi-coercivity}). Due to errors generated by the cutoff it is not a conserved quantity, but we can however prove that it is almost conserved (i.e. it varies slowly). We use  these properties combined with the modulation result to prove the uniform estimates.

This work is organized as follows. In Section~\ref{sec:preliminaries}, we set the mathematical work context. Section~\ref{sec:proof} contains the proof of the main result, assuming uniform estimates. In Section~\ref{sec:variational}, we establish variational characterizations of the profiles and use them to prove a coercivity statement for the hessian of the action functional related to a soliton. In Section~\ref{sec:modulation}, we explain the modulation theory in the neighborhood of a sum of solitons. Finally, we put all pieces together in Section~\ref{sec:uniform} to prove the uniform estimates. The Appendix contains the proof of a compact injection used in Section~\ref{sec:proof} and interactions estimates used in Section~\ref{sec:uniform}.

\section{Mathematical context}\label{sec:preliminaries}

In this section we introduce rigorously all the necessary material for our study and restate our result in the Hamiltonian formulation for~\eqref{eq:nlkg}, which is a more suitable formulation for our needs. But before  let us precise some notations.  We denote by $L^q(\mathbb R^d)$ the standard Lebesgue space and its norm by $\norm{\cdot}_q$. The space $L^2(\R^d)$ is viewed as a real Hilbert space endowed with the scalar product
\[
\psld{u}{v}=\Re\int_{\R^d}u\bar vdx.
\]
The Sobolev spaces $H^s(\R^d)$ are endowed with their usual norms $\norm{\cdot}_{H^s}$. For the product space $L^2(\R^d)\times L^2(\R^d)$ we use the norm
\[
\norm[\bigg]{\binom{u_1}{u_2}}_{L^2\times L^2}=\sqrt{\norm{u_1}^2_2+\norm{u_2}_2^2},
\]
with similar convention for $H^1(\R^d)\times L^2(\R^d)$, $H^s(\R^d)\times H^{s-1}(\R^d)$, etc.
We shall sometimes us the following notational shortcut:
\[
\psld{u}{\nabla v}:=\begin{pmatrix}\psld{u}{\frac{\partial v}{\partial x_1}}\\\vdots\\\psld{u}{\frac{\partial v}{\partial x_d}}\end{pmatrix}.
\]
Finally, unless otherwise specified the components of a vector $W\in L^2(\R^d)\times L^2(\R^d)$ will be denoted by $W_1$ and $W_2$.

The \emph{Hamiltonian Formalism} for the nonlinear Klein-Gordon equation~\eqref{eq:nlkg} is formulated  as follows. For $(u_{1},u_{2})\in H^{1}(\R^d)\times L^{2}(\R^d)$ we
define the following Hamiltonian (which we will call \emph{energy} in the sequel)
\begin{equation*}
E\binom{u_1}{u_2} 
	:=  \frac{1}{2}\norm{ u_2}_2^2+\frac12\norm{\nabla u_1}_2^2+\frac{m}{2}\norm{u_1}_2^2-\frac{1}{p+1}\norm{u_1}_{p+1}^{p+1}.
\end{equation*}
Define the matrix $J:=\begin{pmatrix}
0 & 1\\
-1 & 0
\end{pmatrix}$. 
Then $u$ is a solution of~\eqref{eq:nlkg} if and only if 
$(u_{1},u_{2}):=(u,u_{t})$ 
solves
 the following equation
\begin{equation}\label{eq:nlkg2}
\partial_{t}\binom{u_{1}}{u_{2}}=JE'\binom{u_{1}}{u_{2}}.
\end{equation}
From now on we shall work only with the Hamiltonian equation~\eqref{eq:nlkg2}.

Due to this Hamiltonian formulation the energy is (at least formally) conserved. In addition, the invariance of~\eqref{eq:nlkg2} under phase shifts and space translations generates two other \emph{conservations laws}, the \emph{charge} $Q$ and the (vectorial) \emph{momentum} $P$, defined in the following way:
\begin{equation*}
Q\binom{u_{1}}{u_{2}} = \Im\int_{\R^d} u_{1}\bar{u}_{2}dx,\qquad
P\binom{u_{1}}{u_{2}}=\Re\int_{\R^d}\nabla u_{1}\bar{u}_{2}dx.
\end{equation*}

With our restrictions on the growth of the nonlinearity in Theorem~\ref{thm:1} ($L^2$-subcritical), it is well-known that the Cauchy problem for~\eqref{eq:nlkg2} is globally well-posedness in the energy space $H^1(\R^d)\times L^2(\R^d)$. More precisely, the following well-posedness theory holds.

\begin{lwp1}
Assume $1<p<1+\frac4d$. For any initial data $U_0=(u_1^0,u_2^0)\in H^1(\R^d)\times L^2(\R^d)$ there exists a unique maximal  solution of~\eqref{eq:nlkg2}
\[
U\in \mathcal C\left(\R,H^1(\R^d)\times L^2(\R^d)\right)\cap \mathcal C^1\left(\R,L^2(\R^d)\times H^{-1}(\R^d)\right).
\]
Furthermore, we have the following properties.\\
\emph{Conservation of energy, charge and momentum}: for all $t\in\R$, we have
\[
E(U(t))=E(U_0),\quad Q(U(t))=Q(U_0),\quad P(U(t))=P(U_0),
\]
\emph{Global estimate}: there exist $C_0>0$ such that $\norm{U}_{\mathcal C(\R,H^1\times L^2)}\leq C_0\norm{U_0}_{H^1\times L^2}$.\\
\emph{Uniqueness in light cones}: If $\tilde U$ is another solution to~\eqref{eq:nlkg2} on $(0,T)$ for $T>0$ with $\tilde U(0)=U_0$ in $\{x:\,|x-x_0|<T\}$ for some $x_0\in\R^d$, then $\tilde U\equiv U$ on the backward light cone $\{(t,x)\in[0,T)\times \R^d\;:|x-x_0|<T-t\}$.\\
\emph{Continuous dependency upon the initial data}:  if $(U_0^n)\subset H^1(\R^d)\times L^2(\R^d)$ converges to $U_0$ in $H^1(\R^d)\times L^2(\R^d)$, then the associated solutions $(U_n)$ of~\eqref{eq:nlkg2} converge in $\mathcal C\left(I,H^1(\R^d)\times L^2(\R^d)\right)$ for any compact time interval $I\subset \R$ to the solution $U$ of~\eqref{eq:nlkg2} with initial data $U(0)=U_0$.
\end{lwp1}

For this set of results, we refer to 
the classical papers by Ginibre and Velo~\cite{GiVe85,GiVe89}, or the recent review in the  paper~\cite{KiStVi12} by Killip, Stovall and Visan.
For our purposes, we will also need a more refined result on local well-posedness in the slightly larger space $H^s(\R^d)\times H^{s-1}(\R^d)$ for some $s<1$ (see Lindblad and Sogge~\cite{LiSo95} or  Nakamura and Ozawa~\cite{NaOz01}).

\begin{lwp2}
Let $s>0$ be such that either $s>d/2$ or $1/2\leq s<d/2$ and $p< 1+\frac{4}{d-2s}$. For any initial data $U_0=(u_1^0,u_2^0)\in H^s(\R^d)\times H^{s-1}(\R^d)$ there exists a unique maximal  solution of~\eqref{eq:nlkg2}
\[
U\in \mathcal C\left((-T_\star,T^\star),H^s(\R^d)\times H^{s-1}(\R^d)\right).
\]
Furthermore, we have the \emph{continuous dependent upon the initial data}:  if $(U_0^n)\subset H^s(\R^d)\times H^{s-1}(\R^d)$ converges to $U_0$ in  $H^s(\R^d)\times H^{s-1}(\R^d)$ , then the associated solutions $(U_n)$ of~\eqref{eq:nlkg2} converge to $U$ in $\mathcal C\left(I,H^s(\R^d)\times H^{s-1}(\R^d)\right)$ for any compact time interval $I\subset (-T_\star,T^\star)$, where $U$ is the solution to~\eqref{eq:nlkg2} with initial data $U(0)=U_0$.
\end{lwp2}

A useful consequence of the uniqueness in light cones (Cauchy Theory in $H^1\times L^2$) is the following \emph{finite speed of propagation} property.
\begin{FPS}
Let $U=(u_1,u_2)\in H^1(\R^d)\times L^2(\R^d)$ be a solution of~\eqref{eq:nlkg2} on $(-\infty,T^\star]$. There exists $C_0$, depending only on $\norm{U(T^\star)}_{H^1\times L^2}$  such that if there exist $0<\varepsilon$ and $M>0$ satisfying 
\[
\int_{|x|>M}|\nabla u_1(T^\star)|^2+|u_1(T^\star)|^2+|u_2(T^\star)|^2dx\leq \varepsilon,
\]
then for any $t\in [-\infty,T^\star]$ we have
\[
\int_{|x|>2M+(T^\star-t)}|\nabla u_1(t)|^2+|u_1(t)|^2+|u_2(t)|^2dx\leq C_0\varepsilon,
\]	
\end{FPS}

\begin{proof}
Let $\chi_M$ be a cutoff function such that 
\[
\chi_M(x)=\begin{cases}
1&\text{ for }|x|>2M\\
0&\text{ for }|x|<M 
\end{cases}
\quad\text{ and }\quad
\norm{\nabla\chi_M}_\infty<\frac{C_0}{M}.
\]
Define $U_{T^\star,M}:=U(T^\star)\chi_M$ and denote by $U_M$ the associated solution of~\eqref{eq:nlkg2}. By assumption, we have $\norm{U_{T^\star,M}}^2_{H^1\times L^2}\leq \varepsilon$ and by the Cauchy Theory in $H^1\times L^2$ the solution $U_M$ exists on $\R$ and verifies for all $t\in\R$
\[
\norm{U_M(t)}^2_{H^1\times L^2}\leq C_0\varepsilon.
\]
However, by uniqueness on light cones, $U_M$ and $U$ coincide on $\{(t,x)\in \R\times\R^d:\,|x|>2M+(T^\star-t)\}$, and  for any $t\in (-\infty,T^\star)$ this implies
\[
\int_{|x|>2M+(T^\star-t)}|\nabla u_1(t)|^2+|u_1(t)|^2+|u_2(t)|^2dx\leq \norm{U_M(t)}^2_{H^1\times L^2}\leq C_0\varepsilon,
\]	
which was the desired conclusion.
\end{proof}

\paragraph{Lorentz transform.} Among the symmetries of~\eqref{eq:nlkg2}, we already mentioned the phase shift and translation. We consider now the Lorentzian symmetry, defined as follows.
Take $U(t,x)=(u_1,u_2)(t,x)$ and  $v\in \R^d$ with $|v|$ smaller than the speed of light for~\eqref{eq:nlkg2}, namely $|v|<1$. The \emph{Lorentz transform}  $\mathscr{L}_vU$ of $U$ is the function of $(t,x)$ defined by
\[
(\mathscr{L}_vU)(t,x):=\begin{pmatrix}u_1(\tau,y)\\
\gamma\left( u_2(\tau,y)- v\nabla_y u_1(\tau,y) \right)
\end{pmatrix}
\]
where $\tau$ and $y$ are defined by
 \begin{align*}
\tau&=\tau(t,x):=\gamma(t-v\cdot x)=\frac{1}{\gamma}t-\gamma (x-vt)\cdot v,\\
y&=y(t,x):=x-x_v+\gamma(x_v-vt)=x-vt+(\gamma-1)(x-vt)_v.
\end{align*}
Here, the Lorentz parameter $\gamma$ is defined by
\[
\gamma:=\frac{1}{\sqrt{1-|v|^2}},
\]
 and the subscript $v$ denote the orthogonal projection onto the vectorial line generated by $v$, that is
\[ 
x_v:=\frac{x\cdot v}{|v|^2}v.
\]
It is simple algebra to verify that~\eqref{eq:nlkg2} is Lorentz invariant, in the sense that if $U$ is a solution of~\eqref{eq:nlkg2}, then  so is $\mathscr{L}_vU$. Also note that the Lorentz transform is invertible with inverse $\mathscr{L}_{-v}$.

\paragraph{Standing waves.} 
Take $\omega\in\mathbb{R}$. In the Hamiltonian formulation,  a \emph{standing wave} with
frequency $\omega$ is a solution of~\eqref{eq:nlkg2} of the form $U(t,x)=e^{i\omega t}\Phi_\omega(x)$. 
Plugging this ansatz for $U$ into~\eqref{eq:nlkg2}, it is easy to see that  $\Phi_\omega=\binom{\varphi_{\omega,1}}{\varphi_{\omega,2}}$  must  be a critical point of $E+\omega Q$, hence a solution to the stationary elliptic system
\begin{equation*}
\left\{
\begin{aligned}
-\Delta \varphi_{\omega,1} +m\varphi_{\omega,1}-|\varphi_{\omega,1}|^{p-1}\varphi_{\omega,1}+i\omega\varphi_{\omega,2}&=0,\\
\varphi_{\omega,2}-i\omega\varphi_{\omega,1}&=0.
\end{aligned}
\right.
\end{equation*}
The solutions of this system are clearly of the form $\binom{\varphi_\omega}{i\omega\varphi_{\omega}}$, where $\varphi_\omega$ satisfies the scalar equation
\begin{equation}\label{eq:scalar-snlkg}
-\Delta \varphi_\omega +(m-\omega^2)\varphi_\omega-|\varphi_\omega|^{p-1}\varphi_\omega=0.
\end{equation}
Solutions to~\eqref{eq:scalar-snlkg} and their properties are well-known (see~\cite{BeLi83-1,BeLi83-2,GiNiNi79,Kw89} and the references therein). 
For every $\omega\in(-\sqrt{m},\sqrt{m})$ there exists a unique, positive, and radial function $\varphi_{\omega}\in\mathcal{C}^2(\R^d)$  solution  of~\eqref{eq:scalar-snlkg}. 
In addition, 
the function $\varphi_{\omega}$ is exponentially decaying at infinity: for any $\mu<(m-\omega^2)$ there exists $C(\mu,\omega)>0$ such that
\begin{equation}\label{eq:exponential-decay}
|\varphi_{\omega}(x)|\leq C(\mu,\omega)e^{-\sqrt{\mu}|x|}\qquad\text{for all }x\in\R^d. 
\end{equation}
Furthermore, any $\varphi_{\omega}$ satisfy the scaling property 
\begin{equation}\label{eq:scaled-phi} 
\varphi_{\omega}(x)=\left({m-\omega^2}\right)^{\frac{1}{p-1}} \tilde\varphi\left(\left({m-\omega^2}\right)^{\frac12}x\right),
\end{equation}
where $ \tilde\varphi$ is the unique positive radial solution to $-\Delta \tilde \varphi+\tilde\varphi-|\tilde\varphi|^{p-1}\tilde\varphi=0$.
The function $\varphi_\omega$ is called \emph{ground state}. In dimension $d\geq 2$, there exist infinitely many other solutions to~\eqref{eq:scalar-snlkg}, called \emph{excited states}.
In the sequel, we shall deal only with ground states solutions to~\eqref{eq:scalar-snlkg}. Indeed, our analysis deeply relies on properties of the ground states which do not hold for other solutions, in particular the stability of the associated standing waves (see Section~\ref{sec:variational} for details).
\begin{remark}
It is interesting to notice that, although the presence of the nonlinear term permits the existence of states with negative energy, the standing waves have always positive energies. Indeed, a straightforward computation assures that for a standing wave $U(t,x)=e^{i\omega t}\Phi_\omega(x)$ the corresponding energy is given by
$$ 
E(\Phi_\omega)=\left(\frac{1}{2}-\frac{1}{p+1}\right)\left(\norm{\nabla \varphi_{\omega}}_2^2+m\norm{\varphi_{\omega}}_2^2\right) +\bigg(\frac 12+\frac{1}{p+1}\bigg)\omega^2\norm{\varphi_{\omega}}_2^2
$$ 
The fact that $p>1$ guarantees that $E(\Phi_\omega)>0.$
\end{remark}

\begin{remark}
The scaling property  \eqref{eq:scaled-phi} guarantees that the energy of the ground states varies continuously with respect to $\omega$. This fact implies that
 the  multi-soliton solutions for~\eqref{eq:nlkg} behave at large time like a sum of solitons that are allowed to have different energies. A straightforward computation indeed gives
\begin{align*}
&\norm{\varphi_{\omega}}_2^2=(m-\omega^2)^{\frac{4-d(p-1)}{2(p-1)}}\norm{\tilde\varphi}_2^2\\
&\norm{\nabla \varphi_{\omega}}_2^2=(m-\omega^2)^{\frac{p(2-d)+2+d}{2(p-1)}}\norm{\nabla \tilde\varphi}_2^2.
\end{align*}
Now $ \tilde\varphi$ is solution of  $-\Delta \tilde \varphi+\tilde\varphi-|\tilde\varphi|^{p-1}\tilde\varphi=0$ such that, by means of Pohozaev identity, 
$$\norm{\nabla \tilde\varphi}_2^2=\frac{d(p-1)}{2d-(d-2)(p+1)}\norm{\tilde\varphi}_2^2.$$
Merging all this information we get the relation between energy and $\omega$ given by
$$E(\varphi_\omega)=g(\omega)\norm{\tilde\varphi}_2^2,$$
where
\[
g(\omega)=\frac{p-1}{2(p+1)}\left( (m-\omega^2)^{\frac{p(2-d)+2+d}{2(p-1)}}+m(m-\omega^2)^{\frac{4-d(p-1)}{2(p-1)}}\right)
+\left( \frac{p+3}{2(p+1)}\omega^2(m-\omega^2)^{\frac{4-d(p-1)}{2(p-1)}}\right),
\]
which can be rewritten as
$$
g(\omega)=\frac{(m (p-1)+2 \omega^2) (m-\omega^2)^{\frac2{p-1}-\frac d2}}{(p+1)}.
$$
The monotonicity of $g( \omega)$ when $\omega$ belongs to $\mathcal O_{\rm stab}$
follows easily.
\end{remark}

\paragraph{Solitons.}
Starting from a standing wave, one generates a  new family of solutions to~\eqref{eq:nlkg2} simply by boosting them using the Lorentz transform. These new solutions are the \emph{solitary waves} (or simply \emph{solitons}). 
Precisely, take a frequency $|\omega|\leq\sqrt{m}$, the profile $\Phi_\omega:=\binom{\varphi_\omega}{i\omega\varphi_\omega}$ (where $\varphi_\omega$ is the ground state of~\eqref{eq:scalar-snlkg}), a phase $\theta\in\mathbb{R}$, and a speed and a position $v,x_{0}\in\mathbb{R}^{d}$  with
$|v|<1$. The associated soliton is 
\[
e^{i\frac{\omega}{\gamma}t+i\theta}\Phi_{\omega,v}(x-vt-x_0),
\]
where the new profile $\Phi_{\omega,v}$ is given by
 \begin{equation}\label{eq:profile}
\Phi_{\omega,v}(x)=e^{-i\gamma\omega v\cdot x}\binom{\varphi_{\omega}(x+(\gamma-1)x_{v})}{\gamma(i\omega\varphi_\omega(x+(\gamma-1)x_{v})-v\nabla\varphi_{\omega}(x+(\gamma-1)x_{v}))}.
\end{equation}
By direct computation, and provided we have noticed that
\begin{gather*}
E'\binom{u_{1}}{u_{2}} =  \begin{pmatrix}
-\Delta u_{1}+mu_{1}-|u_{1}|^{p-1}u_{1}\\
u_{2}
\end{pmatrix},\\
Q'\binom{u_{1}}{u_{2}} =  \begin{pmatrix}
iu_{2}\\
-iu_{1}
\end{pmatrix}
=iJ\binom{u_{1}}{u_{2}},
\qquad
P'\binom{u_{1}}{u_{2}} = \begin{pmatrix}
-\nabla u_{2}\\
\nabla u_{1}
\end{pmatrix}
=-J\nabla\binom{u_{1}}{u_{2}},
\end{gather*}
it is not difficult to see that $\Phi_{\omega,v}$ is a critical point of 
\[
S:=E+\frac{\omega}{\gamma}Q+v\cdot P.
\]

With all these preliminaries out of the way, we can go on with the proof of Theorem~\ref{thm:1}.

\section{Existence of Multi-Solitons}\label{sec:proof}

This section contains the core of the proof of Theorem~\ref{thm:1} assuming uniform estimates (Proposition~\ref{prop:uniform}) which are proved in Section~\ref{sec:uniform}. 

Assume that $p<1+\frac{4}{d}$. Take $N\in\mathbb N$, $(\omega_j,\theta_j,v_j,x_j)_{j=1,\dots, N}\subset\mathcal O_{\mathrm{stab}}$, $\Phi_j$  the associated Hamiltonian profiles (as in~\eqref{eq:profile}), $v_\star$ and $\omega_\star$ as in~\eqref{eq:v-star},~\eqref{eq:omega-star},  $(\gamma_j)$ the Lorentz parameters,  $(R_j)$ the corresponding solitons 
\[
R_j(t,x):=e^{i\frac{\omega_j}{\gamma_j}t+i\theta_j}\Phi_j(x-v_jt-x_j),
\]
 and $R$ the sum of the solitons :
\[
R(t,x):=\sum_{j=1}^N R_j(t,x).
\]
Reformulated using the Hamiltonian expression~\eqref{eq:nlkg2} of~\eqref{eq:nlkg}, our goal is to prove that there exists $\alpha=\alpha(d,N)>0$, such that if $v_j\neq v_k$ for any $j\neq k$,  then there exist $T_0\in\R$ and a solution $U$ to~\eqref{eq:nlkg2} existing on $[T_0,+\infty)$ and such that   the following estimate holds for all $t\in[T_0,+\infty)$
\[
\norm*{U(t)-R(t)}_{H^1\times L^2}\leq e^{-\alpha\sqrt{m-\omega^2_\star}v_\star t}.
\]

We are going to define a sequence of approximate multi-solitons and prove its convergence to the desired solution of~\eqref{eq:nlkg2}.
Take an increasing sequence of time $T^n\to+\infty$ and for each $n$ let $U_n$ be the solution to~\eqref{eq:nlkg2} obtained by solving~\eqref{eq:nlkg2} backward in time from $T^n$ with final data  $U_n(T^n)=R(T^n)$. Our proof will rely  on two main ingredients. First we have uniform estimates for the sequence of approximate multi-solitons.

\begin{proposition}[Uniform Estimates]\label{prop:uniform}
There exist $\alpha=\alpha(d,N)>0$,  and $T_0\in\R$ (independent of $n$) such that for $n$ large enough the solution $U_n$ of~\eqref{eq:nlkg2} with $U_n(T^n)=R(T^n)$ exists on $[T_0,T^n]$ and satisfies for all $t\in[T_0,T^n]$ the estimate
\begin{equation}\label{eq:uniform}
\norm{U_n(t)-R(t)}_{H^1\times L^2}\leq e^{-\alpha \sqrt{m-\omega_\star^2} v_\star t}
\end{equation}
\end{proposition}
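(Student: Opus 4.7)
The plan is a continuity argument of bootstrap type. For fixed $n$ (large enough), let us define
\[
T^\star := \inf\bigl\{\tau \in [T_0, T^n] \, : \, \norm{U_n(s)-R(s)}_{H^1\times L^2} \leq e^{-\alpha\sqrt{m-\omega_\star^2}v_\star s}\ \text{for all } s\in [\tau, T^n]\bigr\}.
\]
Global wellposedness in $H^1\times L^2$ ensures that $U_n$ is defined on $[T_0, T^n]$, and since $U_n(T^n)=R(T^n)$, continuity in time gives $T^\star<T^n$. The goal is to prove that, on $[T^\star, T^n]$ and under the bootstrap assumption defining $T^\star$, the estimate actually holds with the constant $\tfrac12$ in front of the exponential, provided $T_0$ is chosen sufficiently large. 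A standard continuity argument then forces $T^\star = T_0$ and delivers the proposition.

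To improve the bound, I would first apply the modulation result of Section~\ref{sec:modulation} on $[T^\star, T^n]$: this produces time-dependent parameters $(\omega_j(t), \theta_j(t), v_j(t), x_j(t))$, close to the reference ones, together with modulated solitons $\tilde R_j(t)$, so that the error $W(t) := U_n(t) - \sum_{j=1}^N \tilde R_j(t)$ satisfies orthogonality conditions of type~\eqref{eq:ortho-dyn} enabling the coercivity of the Hessian of each local action on the orthogonal subspace. The modulation equations~\eqref{eq:parameters-derivatives-estimate} then bound the derivatives $|\dot\omega_j| + |\dot\theta_j - \omega_j/\gamma_j| + |\dot v_j| + |\dot x_j - v_j|$ in terms of $\norm{W(t)}_{H^1\times L^2}$ plus exponentially small interaction terms, so the modulated parameters stay close to their reference counterparts throughout $[T^\star, T^n]$.

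Next, following the strategy sketched in the introduction, I would construct a localized Lyapunov functional. Let $\chi_j(t,x)$ be smooth cutoffs concentrated in a ball around the trajectory of the $j$-th soliton and forming a partition of unity; this can be arranged because the velocities $v_j$ are pairwise distinct, so the solitons separate at rate at least $v_\star$. Define
\[
\mathcal H(t) := \sum_{j=1}^N \Bigl( E^{\chi_j} + \tfrac{\omega_j(t)}{\gamma_j(t)}\, Q^{\chi_j} + v_j(t) \cdot P^{\chi_j} \Bigr)(U_n(t)),
\]
where $E^{\chi_j}, Q^{\chi_j}, P^{\chi_j}$ are the localized versions of energy, charge and momentum. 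Two properties of $\mathcal H$ are central. First, almost-conservation, $\bigl|\tfrac{d}{dt}\mathcal H(t)\bigr| \leq C e^{-2\alpha\sqrt{m-\omega_\star^2}v_\star t}$, which relies on the exact conservation of $E, Q, P$, on the fact that the cutoff errors are supported in the regions between solitons where the profiles are already exponentially small by~\eqref{eq:exponential-decay}, on finite speed of propagation, and on the modulation laws. Second, coercivity, essentially Lemma~\ref{lem:multi-coercivity}, giving $\mathcal H(U_n(t)) - \mathcal H\bigl(\sum_j \tilde R_j(t)\bigr) \geq c \norm{W(t)}_{H^1\times L^2}^2 - C e^{-2\alpha\sqrt{m-\omega_\star^2}v_\star t}$, obtained by transferring the single-soliton coercivity of Lemma~\ref{lem:coercivity} through each cutoff and exploiting the orthogonality conditions from modulation.

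Integrating the almost-conservation bound on $[t, T^n]$ and using that $W(T^n)=0$ and that the cross-terms in $\mathcal H$ at $t=T^n$ are themselves exponentially small by soliton separation, I would obtain $\norm{W(t)}_{H^1\times L^2}^2 \leq C e^{-2\alpha\sqrt{m-\omega_\star^2}v_\star t}$. Combined with the control on the deviation of the modulated parameters from the reference ones (obtained by integrating~\eqref{eq:parameters-derivatives-estimate} backward from $T^n$), this yields $\norm{U_n(t)-R(t)}_{H^1\times L^2} \leq C e^{-\alpha\sqrt{m-\omega_\star^2}v_\star t}$, which strictly improves the bootstrap assumption once $T_0$ is chosen large enough to absorb the prefactor $C$. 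The main obstacle is to pick $\alpha=\alpha(d,N)$ strictly smaller than a threshold dictated by the exponential decay of the profiles (the factor $\sqrt{m-\omega_\star^2}$ in~\eqref{eq:exponential-decay}) and by the separation rate $v_\star$, so that all the interaction errors generated in both the almost-conservation and the coercivity steps decay strictly faster than $e^{-\alpha\sqrt{m-\omega_\star^2}v_\star t}$ and the closure of the bootstrap is preserved.
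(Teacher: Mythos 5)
Your skeleton is the same as the paper's (bootstrap/continuity argument, modulation, localized action, almost conservation, localized coercivity), but the quantitative closure of the bootstrap fails as written. The decisive point is that \emph{every} error term must be $o(e^{-2\alpha\sqrt{m-\omega_\star^2}v_\star t})$, not $O(e^{-2\alpha\sqrt{m-\omega_\star^2}v_\star t})$: from a bound $\norm{W(t)}_{H^1\times L^2}^2\leq C e^{-2\alpha\sqrt{m-\omega_\star^2}v_\star t}$, i.e.\ $\norm{U_n-R}\leq C e^{-\alpha\sqrt{m-\omega_\star^2}v_\star t}$, you cannot ``absorb the prefactor $C$ by choosing $T_0$ large'': both sides decay at exactly the same rate, so no choice of $T_0$ turns $C$ into $\tfrac12$. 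The paper arranges strictly faster decay everywhere: interaction terms carry $e^{-3\alpha\sqrt{m-\omega_\star^2}v_\star t}$, and the cutoff errors carry a factor $t^{-1/2}$ because the partition of unity consists of slabs orthogonal to a well-chosen direction $e_1$ with transition regions of width $\sqrt t$ (so $\norm{\nabla\sqrt{\phi_j}}_\infty\lesssim t^{-1/2}$, which is used both in Lemma~\ref{lem:almost-conservation} and in Lemma~\ref{lem:multi-coercivity}). Your ``balls around the trajectories'' with unspecified (fixed-shape) transition give only $O(\norm{W}^2)=O(e^{-2\alpha\sqrt{m-\omega_\star^2}v_\star t})$ for these errors, which is exactly borderline; the same remark applies to the $-Ce^{-2\alpha\sqrt{m-\omega_\star^2}v_\star t}$ error you allow in your coercivity statement and to your almost-conservation bound.

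The second, related gap is the treatment of the term that is genuinely borderline even with good cutoffs. You put the modulated coefficients $\omega_j(t)/\gamma_j(t)$ and $v_j(t)$ inside the Lyapunov functional (note also that the modulation of Proposition~\ref{prop:dyn-mod} does not modulate velocities: there are exactly $d+2$ orthogonality conditions per soliton, matching $\theta_j,\omega_j,x_j$, and no law for $\dot v_j$ is available). Then $\tfrac{d}{dt}\mathcal H$ contains $\partial_t(\omega_j/\gamma_j)\,Q^{\chi_j}(U_n)$, and since $Q^{\chi_j}(U_n)=O(1)$, even the improved estimate~\eqref{eq:better-estimate} only gives $O(e^{-2\alpha\sqrt{m-\omega_\star^2}v_\star t})$ for this term — again borderline, and your sketch proposes no cancellation (e.g.\ subtracting $\sum_j S_{\tilde\omega_j}(\tilde R_j)$) to cure it. The paper instead freezes the coefficients at $\omega_j,v_j$; the price is a nonzero linear term $\tfrac{\omega_j-\tilde\omega_j}{\gamma_j}Q'(\tilde R_j)\Upsilon$ in the expansion of $\mathcal S(t,U_n)$ (Lemma~\ref{lem:taylor}), which is made $O(e^{-3\alpha\sqrt{m-\omega_\star^2}v_\star t})$ thanks to the improved bound $|\tilde\omega_j-\omega_j|=O(e^{-2\alpha\sqrt{m-\omega_\star^2}v_\star t})$ of Lemma~\ref{lem:claim3}, itself proved via the almost-conserved localized charges together with the nondegeneracy $\partial_\omega Q(\Phi_{\omega,v})\neq 0$ coming from the stability condition in $\mathcal O_{\mathrm{stab}}$. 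Your proposal never confronts this linear/coefficient-derivative term, and without it (or an equivalent device) the bootstrap with constant $\tfrac12$ does not close.
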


Proposition~\ref{prop:uniform} establishes that the approximate multi-solitons $U_n$ all satisfy the desired estimate on time intervals of the form $[T_0,T^n]$, with $T_0$ \emph{independent} of $n$. 
The proof of Proposition~\ref{prop:uniform} is rather involved and we postpone it to Section~\ref{sec:uniform} (useful informations for this proof are derived in Sections~\ref{sec:variational} and~\ref{sec:modulation}).  

The second ingredient of the proof of Theorem~\ref{thm:1} is an $H^1\times L^2-$compactness property of the sequence of initial data of the approximate multi-solitons.

\begin{lemma}[Compactness]\label{lem:compact}
Let $T_0$ be given by Proposition~\ref{prop:uniform}. For any $\varepsilon>0$ there exists $M_\varepsilon$ such that for any $n$ large enough $U_n$ verifies
\[
\int_{|x|>M_\varepsilon}|\nabla U_{n,1}(T_0)|^2+|U_{n,1}(T_0)|^2+|U_{n,2}(T_0)|^2dx\leq \varepsilon.
\]
\end{lemma}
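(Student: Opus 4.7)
The strategy is to apply the Finite Propagation Speed property at a suitably chosen intermediate time $T_\star\in[T_0,T^n]$, rather than trying to use Proposition~\ref{prop:uniform} directly at $T_0$. At $t=T_0$ the bound $e^{-\alpha\sqrt{m-\omega_\star^2}v_\star T_0}$ supplied by Proposition~\ref{prop:uniform} is a \emph{fixed} positive number depending on $T_0$, whereas we need to control the tail by an arbitrarily small $\varepsilon>0$. Taking $T_\star$ large makes the analogous bound at $T_\star$ arbitrarily small, at the price of enlarging the exterior radius when this tail estimate is transported back to $T_0$ via Finite Propagation Speed; this price is a finite quantity depending only on $\varepsilon$, not on $n$.

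More precisely, fix $\varepsilon>0$ and let $C_0$ be the constant from Finite Propagation Speed. Using the exponential decay~\eqref{eq:exponential-decay} of each ground state $\varphi_{\omega_j}$ together with the explicit formula~\eqref{eq:profile} for the Lorentz-boosted profile, I first pick $L=L(\varepsilon)$ so large that, for every $j=1,\dots,N$,
\[
\int_{|y|>L}\bigl(|\nabla\Phi_{j,1}(y)|^2+|\Phi_{j,1}(y)|^2+|\Phi_{j,2}(y)|^2\bigr)\,dy\leq \frac{\varepsilon}{16N^2C_0}.
\]
Next, I pick $T_\star=T_\star(\varepsilon)\geq T_0$ large enough that $e^{-2\alpha\sqrt{m-\omega_\star^2}v_\star T_\star}\leq \varepsilon/(8C_0)$, and set
\[
M':=\max_{1\leq j\leq N}|v_jT_\star+x_j|+L,\qquad M_\varepsilon:=2M'+(T_\star-T_0).
\]
Since $T_\star$ and $L$ depend only on $\varepsilon$, so does $M_\varepsilon$.

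For $n$ so large that $T^n\geq T_\star$, the argument runs as follows. In the tail integral of $R(T_\star)=\sum_j R_j(T_\star)$ outside $|x|>M'$, the inequality $|x-v_jT_\star-x_j|\geq |x|-\max_k|v_kT_\star+x_k|\geq L$ allows me to translate variables in each term and, combined with $(a_1+\dots+a_N)^2\leq N\sum a_j^2$, yields a control of the $(H^1\times L^2)^2$-tail of $R(T_\star)$ outside $|x|>M'$ by $\varepsilon/(16C_0)$. Proposition~\ref{prop:uniform} applied at $t=T_\star\in[T_0,T^n]$ gives $\norm{U_n(T_\star)-R(T_\star)}_{H^1\times L^2}^2\leq \varepsilon/(8C_0)$. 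Combining these with $|a+b|^2\leq 2|a|^2+2|b|^2$ bounds the tail of $U_n(T_\star)$ outside $|x|>M'$ by $\varepsilon/(2C_0)$. Finally, Finite Propagation Speed applied with $T^\star=T_\star$ and $t=T_0$ on the interval $[T_0,T_\star]\subset[T_0,T^n]$ transports the smallness to $T_0$:
\[
\int_{|x|>M_\varepsilon}\bigl(|\nabla U_{n,1}(T_0)|^2+|U_{n,1}(T_0)|^2+|U_{n,2}(T_0)|^2\bigr)\,dx\leq C_0\cdot\frac{\varepsilon}{2C_0}=\frac{\varepsilon}{2}\leq \varepsilon,
\]
which is the desired estimate. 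The only nontrivial point is the calibration of $T_\star$ just discussed; the rest is routine.
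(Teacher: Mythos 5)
Your proposal is correct and follows essentially the same route as the paper: choose an intermediate time $T^\star$ large enough that the uniform estimate of Proposition~\ref{prop:uniform} plus the exponential decay of the solitons makes the tail of $U_n(T^\star)$ small, then transport this smallness back to $T_0$ by Finite Propagation Speed, taking $M_\varepsilon$ of the form $2M'+(T^\star-T_0)$. Your version merely tracks the constant $C_0$ from the Finite Propagation Speed statement more explicitly than the paper does, which is a harmless refinement rather than a different argument.
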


The argument for the proof of Lemma~\ref{lem:compact} is different from the Schr\"odinger equation case.  Indeed, we benefit with  the Klein-Gordon equation of the Finite Propagation Speed, which is not the case for Schr\"odinger equations where one has to us virial identities (see e.g.~\cite[Lemma 2]{MaMe06}).

\begin{proof}[Proof of Lemma~\ref{lem:compact}]
The result is a consequence of the Finite Speed of Propagation and the uniform estimates of Proposition~\ref{prop:uniform}. Indeed, take $\varepsilon>0$ and let $T^\star$ be such that $e^{-\alpha \sqrt{m-\omega_\star^2} v_\star T^\star}<\frac{\varepsilon}{2}$. Then it follows from Proposition~\ref{prop:uniform} that for $n$ large enough
\begin{equation}\label{eq:compact-1}
\norm{U_n(T^\star)-R(T^\star)}_{H^1\times L^2}\leq \frac{\varepsilon}{2}.
\end{equation}
By exponential decay of the sum of solitons, there exists $\tilde M_\varepsilon$ such that 
\begin{equation}\label{eq:compact-2}
\int_{|x|>\tilde M_\varepsilon}|\nabla (R(T^\star)_1)|^2+|(R(T^\star))_1|^2+|(R(T^\star))_2|^2dx\leq \frac{\varepsilon}{2}.
\end{equation}
Combining~\eqref{eq:compact-1} and~\eqref{eq:compact-2}, we get  
\[
\int_{|x|>\tilde M_\varepsilon}|\nabla U_{n,1}(T^\star)|^2+|U_{n,1}(T^\star)|^2+|U_{n,2}(T^\star)|^2dx\leq \varepsilon.
\]
By  Finite Speed of Propagation, this implies
\[
\int_{|x|>\tilde 2M_\varepsilon+(T^\star-T_0)}|\nabla U_{n,1}(T_0)|^2+|U_{n,1}(T_0)|^2+|U_{n,2}(T_0)|^2dx\leq \varepsilon.
\]
Setting $M_\varepsilon=2\tilde M_\varepsilon+(T^\star-T_0)$ finishes the proof.
\end{proof}

\begin{proof}[Proof of Theorem~\ref{thm:1}]
With in hand our sequence of approximate multi-solitons satisfying the desired estimate, the only thing left to do is to prove that it actually converges to a solution of~\eqref{eq:nlkg2} satisfying the same estimate~\eqref{eq:uniform}. 

First of all, we show the convergence of initial data. Since $U_n$ satisfies~\eqref{eq:uniform}, the sequence $U_n(T_0)$ is bounded in $H^1(\R^d)\times L^2(\R^d)$. Therefore there exists $U_0\in H^1(\R^d)\times L^2(\R^d)$ such that $U_n\rightharpoonup U_0$ weakly in $H^1(\R^d)\times L^2(\R^d)$. We are going  to prove that the previous convergence is strong in  $H^s(\R^d)\times H^{s-1}(\R^d)$ for any $0<s<1$. Take $\varepsilon>0$. Using Lemma~\ref{lem:compact}, we infer the existence of $M_\varepsilon>0$ such that for $n$ large enough
\begin{multline}\label{eq:compact}
\int_{|x|>M_\varepsilon}|\nabla U_{n,1}(T_0)|^2+|U_{n,1}(T_0)|^2+|U_{n,2}(T_0)|^2dx+\\
\int_{|x|>M_\varepsilon}|\nabla U_{0,1}(T_0)|^2+|U_{0,1}(T_0)|^2+|U_{0,2}(T_0)|^2dx\leq \frac\varepsilon2.
\end{multline}
Define $\chi_\varepsilon:\R^d\to[0,1]$ a cutoff function such that $\chi_\varepsilon(x)=1$ if $|x|<M_\varepsilon$, $\chi_\varepsilon(x)=0$ if $|x|>2M_\varepsilon$, and $\norm{\nabla\chi_\varepsilon}_\infty\leq 1$.
We have
\[
\norm{U_n(T_0)-U_0}_{H^s\times H^{s-1}}\leq \norm{(U_n(T_0)-U_0)\chi_\varepsilon}_{H^s\times H^{s-1}}+\norm{(U_n(T_0)-U_0)(1-\chi_\varepsilon)}_{H^s\times H^{s-1}}
\]
From the compactness\footnote{a proof of this fact is included in the Appendix, Lemma~\ref{lem:rellich} for the reader's convenience.} of the injection $H^s(\Omega)\hookrightarrow H^{s-\delta}(\Omega)$ when $\Omega$ is bounded and $\delta>0$ , we infer that, for $n$ large enough and maybe up to a subsequence, we have
\[
\norm{(U_n(T_0)-U_0)\chi_\varepsilon}_{H^s\times H^{s-1}}\leq \frac{\varepsilon}{2}.
\]
Moreover, by~\eqref{eq:compact}
\[
\norm{(U_n(T_0)-U_0)(1-\chi_\varepsilon)}_{H^s\times H^{s-1}}\leq \norm{(U_n(T_0)-U_0)(1-\chi_\varepsilon)}_{H^1\times L^2}\leq \frac{\varepsilon}{2}
\]
 Combining the last three equations gives us
\[
\norm{U_n(T_0)-U_0}_{H^s\times H^{s-1}}\leq \varepsilon.
\]
Hence $U_n(T_0)$ converges strongly to $U_0$ in $H^s(\R^d)\times H^{s-1}(\R^d)$. 

Let us now show that the solution $U$  of~\eqref{eq:nlkg2} in $H^1(\R^d)\times L^2(\R^d)$ with data $U(T_0)=U_0$ satisfies the required estimate. 
By Local Cauchy Theory of~\eqref{eq:nlkg2} in $H^s(\R^d)\times H^{s-1}(\R^d)$, we have the strong convergence 
\[
U_n(t)\to U(t)\qquad \text{ in } H^s(\R^d)\times H^{s-1}(\R^d).
\]
for any $t\in[T_0,+\infty)$. In addition, by uniqueness of the limit and since $U_n(t)$ is bounded in $H^1(\R^d)\times L^2(\R^d)$ (by~\eqref{eq:uniform}), we have the weak convergence for $t\in[T_0,+\infty)$
\[
U_n(t)\rightharpoonup U(t)\qquad \text{ in } H^1(\R^d)\times L^2(\R^d).
\]
Therefore, by weak lower semi-continuity of the $H^1\times L^2$-norm and~\eqref{eq:uniform}, we have
\[
\norm{U(t)-R(t)}_{H^1\times L^2}\leq \liminf_{n\to+\infty}\norm{U_n(t)-R(t)}_{H^1\times L^2}\leq e^{-\alpha\sqrt{m-\omega_\star^2}v_\star t},
\]
which concludes the proof of Theorem~\ref{thm:1}.
\end{proof}

\section{Properties of the profiles}\label{sec:variational}

Since we will be working mainly within the Hamiltonian formulation of~\eqref{eq:nlkg}, it will be convenient to characterize the soliton profiles using the conserved quantities. We already mentioned that the profile $\Phi_{\omega,v}$ is a critical point of the functional \emph{action}
\[
S:=E+\frac{\omega}{\gamma}Q+v\cdot P,
\]
or more explicitly a solution to
\begin{equation}\label{eq:snlkg}
\left\{
\begin{aligned}
-\Delta w_1 +mw_1-|w_1|^{p-1}w_1+i\frac{\omega }{\gamma}w_2-v\cdot\nabla w_2&=0,\\
w_2-i\frac{\omega }{\gamma} w_1+v\cdot\nabla w_1&=0.
\end{aligned}
\right.
\end{equation}
 In this section, we are going to give some variational characterizations of  $\Phi_{\omega,v}$ and study the Hessian $S''(\Phi_{\omega,v})$. 

As far as we know, the variational characterizations given in the following Proposition~\ref{prop:variational} were never derived before, although they are expected in view of what happens in the scalar setting. The ideas on the relationships between different variational characterizations  used further in this section  were introduced by Jeanjean and Tanaka in~\cite{JeTa03-1,JeTa03-2}  (see also~\cite{BeVi10} for related results). We believe that these variational characterizations of the profile $\Phi_{\omega,v}$  are of independent interest. 

For the purpose of constructing multi-solitons, the main result of this section is the coercivity property given in Lemma~\ref{lem:coercivity}. The proof of this result relies on the variational characterization of the profiles as well as on their non-degeneracy, which is given by Lemma~\ref{lem:kernel}.
We shall follow closely the presentation made in~\cite{Le09} for the standing waves of \textsc{nls}.

\subsection{Variational Characterizations}

We define the \emph{mountain pass level} by
\[
MP:=\inf_{\eta\in\Gamma}\sup_{s\in[0,1]}S(\eta(s))
\]
where $\Gamma$ is the set of admissible paths
\[
\Gamma:=\{\eta\in\mathcal C \left((0,1),H^1(\R^d)\times L^2(\R^d)\right);\;\eta(0)=0,S(\eta(1))<0\}.
\]
We define the Nehari constraint for $W\in H^1(\R^d)\times L^2(\R^d)$ by
\[
I(W):=\dual{S'(W)}{W}
\]
and the Nehari level by
\[
NL:=\min\{S(W);\;I(W)=0,W\neq0\}.
\]
We also define the \emph{least energy level} by 
\[
LE:=\min\{S(W);\;W\in H^1(\R^d)\times L^2(\R^d),\,W\not\equiv0,\,S'(W)=0\}.
\]

\begin{proposition}\label{prop:variational}
The profile $\Phi_{\omega,v}$ admits the following variational characterizations:
\[
S(\Phi_{\omega,v})=MP=NL=LE.
\]
\end{proposition}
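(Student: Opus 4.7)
The plan is to establish the chain $S(\Phi_{\omega,v}) \leq MP \leq NL \leq LE \leq S(\Phi_{\omega,v})$, which forces equality throughout. The link $MP = NL$ will come from a Jeanjean--Tanaka fiber-map argument, while $LE = S(\Phi_{\omega,v})$ will be obtained by reducing to the scalar equation~\eqref{eq:scalar-snlkg}.

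Writing $S(W) = A(W) - \tfrac{1}{p+1}\|w_1\|_{p+1}^{p+1}$ where $A$ is the purely quadratic part of $S$, I would first use $|v|<1$ and $\omega^2<m$ together with a short Sylvester computation (on the $3\times 3$ form obtained from Cauchy--Schwarz bounds in the variables $\|w_1\|_2$, $\|\nabla w_1\|_2$, $\|w_2\|_2$) to establish coercivity of $A$ on $H^1\times L^2$. Evaluating $\langle S'(W),W\rangle$ then yields the identities
\[
I(W) \;=\; 2A(W) - \|w_1\|_{p+1}^{p+1} \;=\; 2S(W) - \tfrac{p-1}{p+1}\|w_1\|_{p+1}^{p+1},
\]
from which I read off that, for $w_1\not\equiv 0$, the fiber $s\mapsto S(sW)$ has a unique positive critical point---a strict maximum---located at $s=1$ precisely when $I(W)=0$; in particular, $S(W) = \tfrac{p-1}{2(p+1)}\|w_1\|_{p+1}^{p+1} > 0$ on the Nehari manifold.

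From here, $MP = NL$ is standard. For $MP \leq NL$: given $W$ with $I(W)=0$, pick $T$ so large that $S(TW)<0$ and use the straight path $s\mapsto sTW$; by the fiber analysis, $\sup_{s\in[0,1]} S(sTW) = S(W)$. For the reverse, for any $\eta\in\Gamma$, coercivity of $A$ plus Sobolev embedding give $I(\eta(s)) > 0$ for small nonzero $\eta(s)$, while the second identity above combined with $S(\eta(1))<0$ forces $I(\eta(1))<0$; the intermediate value theorem supplies $s^\star$ with $\eta(s^\star)\neq 0$ on the Nehari manifold, so $\sup_s S(\eta(s)) \geq NL$. The inequalities $NL \leq LE$ (every nontrivial critical point satisfies $I=0$) and $LE \leq S(\Phi_{\omega,v})$ are immediate.

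To close the loop with $S(\Phi_{\omega,v}) \leq LE$ I would reduce to the scalar problem. For any nontrivial critical point $W$ of $S$, the second equation of~\eqref{eq:snlkg} gives $w_2 = i\tfrac{\omega}{\gamma}w_1 - v\cdot\nabla w_1$, and substituting into the first produces a scalar PDE for $w_1$; applying the Lorentz-type ansatz $w_1(x) = e^{-i\omega\gamma v\cdot x}\tilde w(x+(\gamma-1)x_v)$---the very boost used in~\eqref{eq:profile} to build $\Phi_{\omega,v}$ from $\varphi_\omega$---reduces this PDE to~\eqref{eq:scalar-snlkg} for $\tilde w$ at frequency $\omega$. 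Tracking the Jacobian shows that $S(W)$ and the natural scalar action of $\tilde w$ differ by a $W$-independent positive factor, the same one relating $S(\Phi_{\omega,v})$ to the scalar action of $\varphi_\omega$. Classical uniqueness and least-energy results for~\eqref{eq:scalar-snlkg} then give $S(W) \geq S(\Phi_{\omega,v})$, hence $LE \geq S(\Phi_{\omega,v})$. The main obstacle I foresee is precisely this last piece of bookkeeping: verifying explicitly that the Lorentz ansatz intertwines the reduced scalar PDE with~\eqref{eq:scalar-snlkg} and transports the action by a single uniform factor. Everything else is a direct adaptation of the Jeanjean--Tanaka template modulo the coercivity of $A$.
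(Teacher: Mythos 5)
There is a genuine gap: your chain never closes. You prove $MP\leq NL$ (fiber path through a Nehari point), $NL\leq MP$ (every admissible path crosses the Nehari manifold), $NL\leq LE$, $LE\leq S(\Phi_{\omega,v})$, and, via the Lorentz/scalar reduction applied to \emph{critical points}, $LE\geq S(\Phi_{\omega,v})$. Altogether this yields only $MP=NL\leq LE=S(\Phi_{\omega,v})$; the first link of your announced chain, $S(\Phi_{\omega,v})\leq MP$ (equivalently $NL\geq LE$, or $NL\geq S(\Phi_{\omega,v})$), is never addressed, and without it nothing ``forces equality throughout.'' This missing inequality is precisely the hard part: it amounts to showing that \emph{every} point of the Nehari manifold (not just every critical point of $S$) has action at least $S(\Phi_{\omega,v})$, i.e.\ that the ground state minimizes $S$ on the Nehari manifold, and your scalar reduction as formulated only applies to solutions of \eqref{eq:snlkg}, where the second equation lets you eliminate $w_2$. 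In the paper this link is supplied by Lemma~\ref{lem:critical-point}: a Palais--Smale sequence at the mountain-pass level is shown to be bounded and non-vanishing (Lions' lemma), its weak limit $\Psi$ is a nontrivial critical point, and weak lower semicontinuity of the quadratic part gives $S(\Psi)\leq MP$, whence $LE\leq S(\Psi)\leq MP\leq NL\leq LE$; the identification with $\Phi_{\omega,v}$ then proceeds by the same boost reduction you use. So the compactness/existence step is not an optional refinement of the Jeanjean--Tanaka template --- it is the ingredient that closes the loop.

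The gap is repairable in two ways. Either reproduce the paper's route (mountain-pass geometry, which your coercivity of $A$ plus Lemma~\ref{lem:rewriting-trick}-type estimates do give, then Palais--Smale plus concentration-compactness to produce a critical point below $MP$), or push your scalar reduction to \emph{arbitrary} Nehari points: using the reformulation \eqref{eq:S-reformulated}, for any $W$ one has $A(W)\geq \frac1\gamma a(\tilde w_1)$ with $a$ the scalar quadratic form and $\|w_1\|_{p+1}^{p+1}=\frac1\gamma\|\tilde w_1\|_{p+1}^{p+1}$, so the Nehari constraint $2A(W)=\|w_1\|_{p+1}^{p+1}$ forces the scalar Nehari functional of $\tilde w_1$ to be nonpositive; rescaling $\tilde w_1$ onto the scalar Nehari manifold and invoking the classical Nehari characterization of $\varphi_\omega$ then gives $S(W)\geq \frac1\gamma\,(E+\omega Q)(\Phi_{\omega,0})=S(\Phi_{\omega,v})$, hence $NL\geq S(\Phi_{\omega,v})$ without any compactness in the vector setting. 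Neither argument appears in your proposal; as written, the proof establishes strictly less than the Proposition. (The pieces you do prove --- coercivity of the quadratic part, the fiber identities, the crossing argument, and the boost reduction for critical points with the uniform factor $\frac1\gamma$ --- are correct and consistent with the paper's computations.)
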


Let us start by proving using mountain pass arguments that $S$ admits a  critical point. Then we will show that this critical point is at the mountain pass level and also at the least energy level and at the Nehari level and we will identify it with $\Phi_{\omega,v}$. 
\begin{lemma}\label{lem:critical-point}
There exists $\Psi\in H^1(\R^d)\times L^2(\R^d)$ a non-trivial critical point of $S$, i.e. 
\[
\Psi\neq 0,\qquad S'(\Psi)=0.
\]
\end{lemma}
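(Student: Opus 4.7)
My plan is to obtain $\Psi$ by applying the mountain pass theorem to $S$ on the Hilbert space $H:=H^1(\mathbb R^d)\times L^2(\mathbb R^d)$, in the spirit of \cite{JeTa03-1,Le09}.

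The first step is to verify that the quadratic part of $S$,
\[
S_2(W):=\tfrac12\|W_2\|_2^2+\tfrac12\|\nabla W_1\|_2^2+\tfrac{m}{2}\|W_1\|_2^2+\tfrac{\omega}{\gamma}Q(W)+v\cdot P(W),
\]
is coercive on $H$. Using the crude bounds $|Q(W)|\le\|W_1\|_2\|W_2\|_2$ and $|v\cdot P(W)|\le|v|\|\nabla W_1\|_2\|W_2\|_2$ together with Young's inequality applied with two independent parameters $\delta_1,\delta_2>0$, one is reduced to finding $\delta_1,\delta_2$ satisfying $\delta_1+\delta_2<1$, $\delta_2>|v|^2$ and $\delta_1>\tfrac{\omega^2(1-|v|^2)}{m}$, and this system is compatible precisely under the stability assumptions $|v|<1$ and $|\omega|<\sqrt m$. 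Hence $S_2(W)\ge c\,\|W\|_H^2$ for some $c>0$. Combined with the Sobolev bound $\|W_1\|_{p+1}^{p+1}\lesssim\|W_1\|_{H^1}^{p+1}$ (valid since $p$ is $H^1$-subcritical), this gives $S(0)=0$ and $S(W)\ge\delta>0$ on a small sphere $\|W\|_H=\rho$. The second leg of the mountain pass geometry, $S(W^\star)<0$ for some $W^\star\in H$, is immediate by taking $W^\star=(\lambda W_1,0)$ with $W_1\not\equiv 0$ and $\lambda$ large, since the $-\lambda^{p+1}/(p+1)\,\|W_1\|_{p+1}^{p+1}$ term dominates once $p>1$.

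Next I would invoke the mountain pass theorem (without assuming the Palais--Smale condition) to produce a sequence $W^n\in H$ with $S(W^n)\to MP>0$ and $S'(W^n)\to 0$ in $H^\ast$. The algebraic identity
\[
S(W)-\tfrac{1}{p+1}\dual{S'(W)}{W}=\tfrac{p-1}{p+1}S_2(W),
\]
which follows by comparing the degrees of homogeneity of the quadratic and nonlinear parts, combined with the coercivity of $S_2$ above, yields at once boundedness of $(W^n)$ in $H$. Up to a subsequence $W^n\rightharpoonup W^\infty$ weakly in $H$, and by weak continuity of $S'$---the nonlinearity $|u|^{p-1}u$ is sub-Sobolev-critical, so it is handled via the local compactness $H^1_{\mathrm{loc}}\hookrightarrow L^{p+1}_{\mathrm{loc}}$---one obtains $S'(W^\infty)=0$.

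The hard part will be ruling out the possibility $W^\infty\equiv 0$, which is the usual loss of compactness on $\mathbb R^d$. From the same identity together with $\dual{S'(W^n)}{W^n}\to 0$, one obtains $\|W^n_1\|_{p+1}^{p+1}\to\tfrac{2(p+1)}{p-1}MP>0$, so $W^n_1$ cannot vanish in $L^{p+1}$. Lions' concentration--compactness lemma then produces translations $y_n\in\mathbb R^d$ and a nonzero $\tilde W\in H$ such that, up to a subsequence, $W^n(\cdot-y_n)\rightharpoonup\tilde W$ weakly in $H$. Since $S$ is translation invariant, the shifted sequence is still a bounded Palais--Smale sequence at level $MP$, whence $S'(\tilde W)=0$, and $\Psi:=\tilde W$ yields the desired nontrivial critical point of $S$.
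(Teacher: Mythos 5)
Your proof is correct and follows essentially the same route as the paper: mountain-pass geometry for $S$, a Palais--Smale sequence at level $MP$, boundedness via the coercive quadratic part, exclusion of vanishing through Lions' lemma, and passage to a translated weak limit which is a nontrivial critical point. The only cosmetic differences are that you establish coercivity of the quadratic part by Cauchy--Schwarz and Young's inequality rather than via the reformulation~\eqref{eq:S-reformulated} and Lemma~\ref{lem:rewriting-trick}, and you obtain boundedness of the Palais--Smale sequence unconditionally from the identity $S(W)-\frac{1}{p+1}\dual{S'(W)}{W}$ instead of inside the paper's vanishing contradiction argument.
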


Before going further, we make the following useful observations on the formulation of $S$: for $W=(w_1,w_2)\in H^1(\R^d)\times L^2(\R^d)$ it is simple algebra to see that
\begin{multline*}
S(W)
=\frac{1}{2} \norm{\nabla w_1}_2^2-\frac12\norm{v\cdot{\nabla w_1}}_2^2+\frac{1}{2}\left(m-\frac{\omega^2}{\gamma^2}\right) \norm{w_1}_2^2-\frac12\frac \omega \gamma v\cdot\Im\int_{ \R^d}w_1\nabla \bar w_1\\
+\frac12\norm[\big]{v\cdot\nabla w_1-i\frac{\omega}{\gamma}w_1+w_2}_2^2-\frac{1}{p+1}\norm{w_1}_{p+1}^{p+1}.
\end{multline*}
We can remark further that if $\tilde w_1$ is such that $ w_1(x)=e^{-i\omega \gamma v\cdot x}\tilde w_1(x+(\gamma-1)x_v)$ then we have
\begin{multline}\label{eq:S-reformulated}
S(W)
=\frac1\gamma\left(\frac12 \norm{\nabla \tilde w_1}_2^2+\frac{1}{2}\left(m-\omega^2\right) \norm{\tilde w_1}_2^2\right)
+\frac12\norm[\big]{v\cdot\nabla w_1-i\frac{\omega}{\gamma}w_1+w_2}_2^2-\frac{1}{p+1}\norm{w_1}_{p+1}^{p+1}.
\end{multline}
We shall also use the following Lemma at several occasions.

\begin{lemma}\label{lem:rewriting-trick}
For any $\varepsilon>0$ there exists $\delta>0$ such that for any $W=(w_1,w_2) \in H^1(\R^d)\times L^2(\R^d)$ we have 
\[
\varepsilon\norm{w_1}_{H^1}^2+\norm{v\cdot\nabla w_1-i\frac\omega\gamma w_1+w_2}_2^2\geq \delta\norm{W}_{H^1\times L^2}^2.
\]
\end{lemma}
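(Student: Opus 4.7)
The plan is to use a weighted Young-type inequality (Peter-Paul) to split the mixed square, trading part of the $w_1$ contribution for a usable lower bound on $\norm{w_2}_2^2$. The key elementary fact is that for any $\eta>0$ and any $a,b$ in a Hilbert space,
\[
\norm{a+b}_2^2 \geq \frac{1}{1+\eta}\norm{b}_2^2 - \frac{1}{\eta}\norm{a}_2^2,
\]
which follows directly from expanding $\norm{a+b}_2^2 = \norm{a}_2^2+\norm{b}_2^2+2\Re\psld{a}{b}$ and applying $2|\Re\psld{a}{b}|\leq \eta\norm{a}_2^2 + \frac{1}{\eta}\norm{b}_2^2$ (rearranging carefully to get $1/(1+\eta)$ in front of $\norm{b}_2^2$).

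Applying this with $a:=v\cdot\nabla w_1-i\frac{\omega}{\gamma}w_1$ and $b:=w_2$, and using the crude bound
\[
\norm[\big]{v\cdot\nabla w_1-i\tfrac{\omega}{\gamma}w_1}_2^2 \leq C\norm{w_1}_{H^1}^2,
\]
with $C$ depending only on $|v|<1$ and $|\omega|/\gamma$, I obtain
\[
\varepsilon\norm{w_1}_{H^1}^2 + \norm[\big]{v\cdot\nabla w_1-i\tfrac{\omega}{\gamma}w_1 + w_2}_2^2 \geq \Bigl(\varepsilon-\tfrac{C}{\eta}\Bigr)\norm{w_1}_{H^1}^2 + \tfrac{1}{1+\eta}\norm{w_2}_2^2.
\]

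It then suffices to pick $\eta$ large enough, namely $\eta := 2C/\varepsilon$, so that both coefficients are strictly positive. This gives the conclusion with
\[
\delta := \min\Bigl(\tfrac{\varepsilon}{2},\,\tfrac{\varepsilon}{\varepsilon+2C}\Bigr) > 0,
\]
which depends on $\varepsilon$, $|v|$ and $\omega/\gamma$ but not on $W$. There is no real obstacle here; the argument is a purely algebraic Peter-Paul absorption, and the quadratic $\norm{v\cdot\nabla w_1-i\frac{\omega}{\gamma}w_1}_2^2$ is automatically controlled by $\norm{w_1}_{H^1}^2$ because $v\cdot\nabla$ is a first-order operator with bounded coefficient.
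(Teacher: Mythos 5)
Your proof is correct. The key inequality $\norm{a+b}_2^2\geq \frac{1}{1+\eta}\norm{b}_2^2-\frac{1}{\eta}\norm{a}_2^2$ is valid (most directly from $\norm{b}_2^2=\norm{(a+b)-a}_2^2\leq(1+\eta)\norm{a+b}_2^2+(1+\frac{1}{\eta})\norm{a}_2^2$), the crude bound $\norm{v\cdot\nabla w_1-i\frac{\omega}{\gamma}w_1}_2^2\leq C\norm{w_1}_{H^1}^2$ holds with $C$ depending only on $|v|$ and $\omega/\gamma$, and the choice $\eta=2C/\varepsilon$ yields an explicit admissible $\delta$, uniform in $W$. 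The paper reaches the same conclusion by a somewhat different execution: it writes the orthogonal decomposition $w_2=\alpha\left(v\cdot\nabla w_1-i\frac{\omega}{\gamma}w_1\right)+w_2^\perp$, expresses both $\norm{v\cdot\nabla w_1-i\frac{\omega}{\gamma}w_1+w_2}_2^2$ and $\norm{w_2}_2^2$ in terms of $\alpha$ and $\norm{w_2^\perp}_2^2$, and notes that the only possible degeneracy, $\alpha$ near $-1$ (i.e. $w_2$ cancelling the first-order term), is compensated by a fraction of $\varepsilon\norm{w_1}_{H^1}^2$ through the same bound on $v\cdot\nabla w_1-i\frac{\omega}{\gamma}w_1$. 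Both arguments exploit the identical mechanism; yours avoids the explicit decomposition and the case discussion and produces an explicit constant $\delta=\min\left(\frac{\varepsilon}{2},\frac{\varepsilon}{\varepsilon+2C}\right)$, while the paper's version makes the geometric source of the degeneracy ($\alpha=-1$) more visible. One cosmetic remark: with the Young weights as you literally wrote them, $2|\Re\psld{a}{b}|\leq\eta\norm{a}_2^2+\frac{1}{\eta}\norm{b}_2^2$, the constants $\frac{1}{1+\eta}$ and $\frac{1}{\eta}$ do not come out; you need the weights $\frac{1+\eta}{\eta}$ on $\norm{a}_2^2$ and $\frac{\eta}{1+\eta}$ on $\norm{b}_2^2$ (this is the ``careful rearrangement'' you allude to), and it does not affect the correctness of the argument.
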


\begin{proof}
We only have to make $\norm{w_2}_2^2$ appear.
 Write 
\[
w_2=\alpha\left(v\cdot\nabla w_1-i\frac{\omega}{\gamma}w_1\right)+w_2^\perp,
\]
 where $\alpha\in\R$ and $\psld{v\cdot\nabla w_1-i\frac{\omega}{\gamma}w_1}{w_2^\perp}=0$. We have 
\begin{gather}
\norm[\big]{v\cdot\nabla w_1-i\frac{\omega}{\gamma}w_1+w_2}_2^2=
(1+\alpha)^2\norm[\big]{v\cdot\nabla w_1-i\frac{\omega}{\gamma}w_1}_2^2+\norm{w_2^\perp}_2^2\label{eq:possible-degen}\\
\norm{w_2}_2^2=
\alpha^2\norm[\big]{v\cdot\nabla w_1-i\frac{\omega}{\gamma}w_1}_2^2+\norm{w_2^\perp}_2^2\nonumber.
\end{gather}
There is a possible degeneracy in~\eqref{eq:possible-degen} if $\alpha=-1$, but we can compensate it by using a piece of $\norm{w_1}^2_{H^1}$:
\begin{multline}\label{eq:rewriting-trick}
\frac{\varepsilon}{2}\norm{w_1}^2_{H^1}+ \norm[\big]{v\cdot\nabla w_1-i\frac{\omega}{\gamma}w_1+w_2}_2^2\geq \\
 C\left((1+\alpha)^2+\frac{\varepsilon}{2}\right)\norm[\big]{v\cdot\nabla w_1-i\frac{\omega}{\gamma}w_1}_2^2+C\norm{w_2^\perp}_2^2
\geq \tilde C \norm{w_2}_2^2.
\end{multline}
The desired inequality is then a direct consequence of~\eqref{eq:rewriting-trick}.
\end{proof}

\begin{proof}[Proof of Lemma~\ref{lem:critical-point}]
\noindent\emph{Step 1 : Mountain-Pass geometry.}

We claim that the functional $S$ has a mountain-pass geometry, i.e
\[
MP>0.
\]
We start by showing that $\Gamma$ is not empty. Indeed, take $W=(w_1,w_2)\in H^1(\R^d)\times L^2(\R^d)$ and $s>0$. Then using~\eqref{eq:S-reformulated} we see that 
\begin{multline*}
S(sW)
=\frac{s^2}{2} \bigg(
\frac1\gamma\Big(\frac12 \norm{\nabla \tilde w_1}_2^2+\frac{1}{2}\left(m-\omega^2\right) \norm{\tilde w_1}_2^2\Big)
+\frac12\norm[\big]{v\cdot\nabla w_1-i\frac{\omega}{\gamma}w_1+w_2}_2^2\bigg)
-\frac{s^{p+1}}{p+1}\norm{w_1}_{p+1}^{p+1}.
\end{multline*}
Therefore if $s$ is large enough we have $S(sW)<0$, hence the path $s\mapsto S(\frac{s}{C}W)$ belongs to $\Gamma$ provided $C$ has been chosen large enough.

To show that $MP>0$, it is enough to prove that there exists a function $f:\R^+\to\R$ such that $f(s)>0$ for $s$ close to $0$ and  $S(W)\geq f(\norm{W}_{H^1\times L^2})$.
Using~\eqref{eq:S-reformulated}, the continuity of $w_1\to\tilde w_1$ in $H^1(\R^d)$ and Sobolev embeddings, it is easy to see that there exists $\varepsilon>0$ such that
\[
S(W)\geq \frac\varepsilon 2\norm{ w_1}_{H^1}^2+\frac12\norm[\big]{v\cdot\nabla w_1-i\frac{\omega}{\gamma}w_1+w_2}_2^2-C\norm{w_1}_{H^1}^{p+1}.
\]
From Lemma~\ref{lem:rewriting-trick} we infer that there exists $\tilde\delta>0$ such that
\[
S(W)\geq \tilde\delta\norm{W}_{H^1\times L^2}^2-C\norm{W}_{H^1\times L^2}^{p+1}.
\]
This implies that $S(W)>0$ if $\norm{W}_{H^1\times L^2}$ is small and there exist $C>0$, $\delta>0$ such that $S(W)>C>0$ for $\norm{W}_{H^1\times L^2}=\delta$. This implies $MP>0$ and $S$ has a mountain pass geometry.

\noindent\emph{Step 2 : Existence of a Palais-Smale sequence.}

From Ekeland variational principle (see e.g.~\cite{Wi96}) and Step 1, we infer the existence of a Palais-Smale sequence $W_n=(w_{1,n},w_{2,n})$ at the level $MP$, i.e. 
\begin{equation}\label{eq:PS}
S(W_n)\to MP, \quad S'(W_n)\to 0,\qquad \text{as }n\to+\infty.
\end{equation}

\noindent\emph{Step 3 : Non-vanishing of the Palais-Smale sequence.}

Assume by contradiction that the sequence $W_n$ is vanishing, more precisely for any $R>0$ we have 
\[
\lim_{n\to+\infty}\sup_{y\in\R^d}\int_{|x-y|<R}\left(|w_{1,n}|^2+|w_{2,n}|^2\right)dx=0.
\]
Take $\varepsilon>0$ and $R>0$ and let $n$ be large enough so that 
\[	
\sup_{y\in\R^d}\int_{|x-y|<R}\left(|w_{1,n}|^2+|w_{2,n}|^2\right)dx<\varepsilon.
\]
Recall Lions' Lemma (see~\cite{Li84}): for any $w\in H^1(\R^d)$ we have
\begin{equation}\label{eq:lions}
\norm{w}_{p+1}^{p+1}\leq C\left( \sup_{y\in\R^d}\int_{|x-y|<R}|w|^2dx \right)^{p-1}\norm{w}_{H^1}^2.
\end{equation}
Therefore, for $n$ large enough, and using~\eqref{eq:S-reformulated} and Lemma~\ref{lem:rewriting-trick}, we get
\[
S(W_n)\geq C\norm{W_n}_{H^1\times L^2}^2-\varepsilon \norm{w_{1,n}}^2_{H^1},
\]
which implies (if $\varepsilon$ has been chosen small enough) that $(W_n)$ is bounded in $H^1(\R^d)\times L^2(\R^d)$. This boundedness has two consequences: as $n\to+\infty$, we have
\begin{equation}\label{eq:two-consequences}
\dual{S'(W_n)}{W_n}\to0\qquad \text{and}\qquad\norm{w_{1,n}}_{p+1}\to0.
\end{equation}
where the first limit is due to the fact that $(W_n)$ is a Palais-Smale sequence (see~\eqref{eq:PS}) and the second limit comes from~\eqref{eq:lions}. However, we have 
\[
\left(\frac{1}{p+1}-\frac12\right)\norm{w_{1,n}}_{p+1}^{p+1}=S(W_n)-\frac12\dual{S'(W_n)}{W_n}
\]
and therefore~\eqref{eq:two-consequences} implies
\[
\lim_{n\to+\infty}S(W_n)=0,
\]
which enters in contradiction with $\lim_{n\to+\infty}S(W_n)=MP>0$. Therefore the sequence $(W_n)$ is non-vanishing.

\emph{Step 4 : Convergence to a critical point}

 Since $W_n$ is non-vanishing, there exists $R,\delta>0$ and $(y_n)\subset \R^d$ such that for $n$ large enough
\begin{equation}\label{eq:non-vanishing}
\int_{|x-y_n|<R}|w_{1,n}|^2dx>\delta.
\end{equation}
If we substitute  $W_n(\cdot-y_n)$ to $W_n$ (keeping the same notation), the sequence $( W_n)$ is still a Palais-Smale sequence and keeps the same properties. In particular, as known from Step 3,  $( W_n)$ is bounded in $H^1(\R^d)\times L^2(\R^d)$, and therefore we have the existence of $\Psi\in H^1(\R^d)\times L^2(\R^d)$ such that $ W_n\rightharpoonup \Psi$ weakly in $H^1(\R^d)\times L^2(\R^d)$. Since $ W_n$ is a Palais-Smale sequence and $S'$ is continuous, we have $S'(\Psi)=0$. Hence we only have to show that $\Psi$ is non-trivial. This is a direct consequence of~\eqref{eq:non-vanishing} and the compact injection $H^1(|x|<R)\hookrightarrow  L^2(|x|<R)$. Hence $\Psi$ is a non-trivial critical point of $S$ and the proof of Lemma~\ref{lem:critical-point} is finished.
\end{proof}

We turn now to the variational characterizations of the critical point obtained in Lemma~\ref{lem:critical-point}.

\begin{lemma}\label{lem:variational}
Take $\Psi$ the critical point of $S$ found in Lemma~\ref{lem:critical-point}. The following equality is satisfied.
\[
S(\Psi)=MP=NL=LE.
\]
\end{lemma}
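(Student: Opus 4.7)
My plan is to establish the chain
\[
MP\le NL\le LE\le S(\Psi)\le MP,
\]
which, once closed, gives all four quantities equal. Two of these bounds are essentially immediate. Any nontrivial critical point $W$ of $S$ satisfies $I(W)=\dual{S'(W)}{W}=0$, so the set $\{W\neq 0:\,S'(W)=0\}$ defining $LE$ is contained in the Nehari set, yielding $NL\le LE$; and $LE\le S(\Psi)$ is immediate because $\Psi$ itself is a nontrivial critical point provided by Lemma~\ref{lem:critical-point}.

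For the bound $MP\le NL$, I would run the classical fibering argument. Writing
\[
S(W)=\frac12 Q(W)-\frac{1}{p+1}\norm{w_1}_{p+1}^{p+1},\qquad I(W)=Q(W)-\norm{w_1}_{p+1}^{p+1},
\]
where $Q$ denotes the positive-definite quadratic form that appears in~\eqref{eq:S-reformulated} (coercivity coming from Lemma~\ref{lem:rewriting-trick} and the fact that $|\omega|<\sqrt m$), I would fix $W\neq 0$ with $I(W)=0$ and examine $g(s):=S(sW)$. Since $p>1$ and $Q(W),\norm{w_1}_{p+1}^{p+1}>0$, the function
\[
g(s)=\frac{s^2}{2}Q(W)-\frac{s^{p+1}}{p+1}\norm{w_1}_{p+1}^{p+1}
\]
is strictly concave on $(0,\infty)$, tends to $-\infty$ at infinity, and attains its unique positive maximum at the $s$ where $I(sW)=0$; the Nehari condition on $W$ forces this value to be $s=1$. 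Choosing $s_1>0$ with $S(s_1 W)<0$, the straight segment $\eta(t):=ts_1 W$ belongs to $\Gamma$, and
\[
MP\le \max_{t\in[0,1]}S(\eta(t))\le \max_{s\ge 0}S(sW)=S(W).
\]
Taking the infimum over $W$ in the Nehari set then gives $MP\le NL$.

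The last inequality $S(\Psi)\le MP$ requires a semicontinuity argument, and this is where I expect the only real work. Using the identity
\[
S(W)-\frac{1}{p+1}I(W)=\Big(\frac12-\frac{1}{p+1}\Big)Q(W),
\]
and applying it to the Palais-Smale sequence $(W_n)$ constructed in Lemma~\ref{lem:critical-point}, I would use that $(W_n)$ is bounded in $H^1\times L^2$ (established in Step~3 of that proof), so that $I(W_n)=\dual{S'(W_n)}{W_n}\to 0$ and therefore $Q(W_n)\to \frac{2(p+1)}{p-1}MP$. Since $W_n\rightharpoonup \Psi$ and $I(\Psi)=0$, weak lower semicontinuity of $Q$ gives
\[
S(\Psi)=\Big(\frac12-\frac{1}{p+1}\Big)Q(\Psi)\le\Big(\frac12-\frac{1}{p+1}\Big)\liminf_{n\to\infty} Q(W_n)=MP.
\]
The delicate point here is verifying weak lower semicontinuity of $Q$ despite the presence of the first-order cross term $v\cdot\nabla w_1$; this is precisely settled by the rewriting~\eqref{eq:S-reformulated}, which exhibits $Q$ as a sum of (squared) $L^2$-norms of linear forms each of which is weakly continuous from $H^1\times L^2$ to $L^2$.
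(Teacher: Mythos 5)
Your proposal is correct and follows essentially the same route as the paper: the cycle $S(\Psi)\le MP\le NL\le LE\le S(\Psi)$, with the fibering/path argument $\eta(s)=s s_1 W$ for $MP\le NL$ and weak lower semicontinuity of the quadratic part (via the rewriting~\eqref{eq:S-reformulated}) applied to the translated Palais--Smale sequence for $S(\Psi)\le MP$. The only blemish is your claim that $g(s)=\frac{s^2}{2}Q(W)-\frac{s^{p+1}}{p+1}\norm{w_1}_{p+1}^{p+1}$ is strictly concave on $(0,\infty)$ (it is convex near $s=0$); this is harmless, since the sign analysis of $g'(s)=\frac{1}{s}I(sW)$, which you in fact use, already yields the unique maximum at $s=1$.
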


\begin{proof}
Let us start by showing
\begin{equation}\label{eq:inequality-1}
S(\Psi)\leq MP.
\end{equation}
Using $S'(\Psi)=0$, we have
\begin{multline}\label{eq:lower-semi}
S(\Psi)=S(\Psi)-\frac{1}{p+1}\dual{S'(\Psi)}{\Psi}=
\left(\frac{1}{2}-\frac{1}{p+1}\right)\bigg(  \frac1\gamma\Big(\norm{\nabla \tilde w_1}_2^2+
\left(m-\omega^2\right) \norm{\tilde w_1}_2^2\Big)
\\+\norm[\big]{v\cdot\nabla w_1-i\frac{\omega}{\gamma}w_1+w_2}_2^2 \bigg)\leq \dots
\end{multline}
Recall that $\tilde w_1$ is such that $ w_1(x)=e^{-i\omega \gamma v\cdot x}\tilde w_1(x+(\gamma-1)x_v)$ (see~\eqref{eq:S-reformulated}).
Using the weak lower semi-continuity of the norm, 
we can continue the inequality started in~\eqref{eq:lower-semi} by
\begin{multline}\label{eq:lower-semi-2}
\dots\leq
\left(\frac{1}{2}-\frac{1}{p+1}\right)\liminf_{n\to+\infty}\bigg( \frac1\gamma \Big(\norm{\nabla  \tilde w_{1,n}}_2^2+
\left(m-\omega^2\right) \norm{\tilde w_{1,n}}_2^2\Big)
\shoveright{+\norm[\big]{v\cdot\nabla  w_{1,n}-i\frac{\omega}{\gamma} w_{1,n}+ w_{2,n}}_2^2 \bigg)}\\
=\liminf_{n\to+\infty}\left( S( W_n)-\frac{1}{p+1}\dual{S'( W_n)}{ W_n}\right).
\end{multline}
Since $( W_n)$ is a Palais-Smale sequence we have 
\[
\lim_{n\to+\infty}\left(S( W_n)-\frac{1}{p+1}\dual{S'( W_n)}{ W_n}\right)=MP,
\]
and we can conclude from~\eqref{eq:lower-semi} and~\eqref{eq:lower-semi-2} that $\Psi$ verifies~\eqref{eq:inequality-1}.

We continue by showing that 
\begin{equation}\label{eq:inequality-2}
MP\leq NL.
\end{equation}
Take an element of the Nehari manifold $W\in H^1(\R^d)\times L^2(\R^d)$, $I(W)=0$. The idea, as in~\cite{JeTa03-1,JeTa03-2}, is to construct a path in $\Gamma$ so that $S(\eta(s))$ achieves its maximum when $\eta(s)=W$. It is easy to see that for $C$ large enough the path $\eta_C$ defined by $\eta_C(s)=CsW$ fulfills our needs. Indeed, we have
\begin{multline*}
\frac{ \partial }{\partial s}S(sW)=s\bigg( \frac1\gamma\Big( \norm{\nabla \tilde w_1}_2^2+\left(m-\omega^2\right) \norm{\tilde w_1}_2^2\Big)
+\norm[\big]{v\cdot\nabla w_1-i\frac{\omega}{\gamma}w_1+w_2}_2^2 
-s^{p-1}\norm{w_1}_{p+1}^{p+1}
\bigg).
\end{multline*}
In particular, $\frac{ \partial }{\partial s}S(sW)|_{s=1}=I(W)=0$. Therefore $\frac{ \partial }{\partial s}S(sW)>0$ for $s\in(0,1)$ and $\frac{ \partial }{\partial s}S(sW)<0$ for $s>1$. Hence the path $S(\eta_C(s))$ achieves its maximum when $s=\frac1C$ and $\eta(\frac1C)=W$. Therefore,
\[
ML\leq S(W),
\]
and since this is true for any $W$ on the Nehari manifold this proves~\eqref{eq:inequality-2}.

It is easy to see that
\begin{equation}\label{eq:inequality-3}
NL\leq LE.
\end{equation}
Indeed, any solution $W$ of~\eqref{eq:snlkg} (i.e. any critical point of $S$) satisfies the Nehari identity $I(W)=0$. Thus the infimum for $NL$ is taken on a larger set than the infimum for $LE$, hence~\eqref{eq:inequality-3}.

Finally, as a direct consequence of $S'(\Psi)=0$ and the definition of $LE$ we have
\begin{equation}\label{eq:inequality-4}
LE\leq S(\Psi).
\end{equation}
Combining~\eqref{eq:inequality-1},~\eqref{eq:inequality-2},~\eqref{eq:inequality-3},~\eqref{eq:inequality-4} finishes the proof of Lemma~\ref{lem:variational}.
\end{proof}

\begin{proof}[Proof of Proposition~\ref{prop:variational}]
In view of Lemmas~\ref{lem:critical-point} and~\ref{lem:variational}, the only thing left to prove is that $\Psi=\Phi_{\omega,v}$. Let us first see the case $v=0$. Since $\Psi=(\psi_1,\psi_2)$ is a critical point of $S$, we have $\psi_2=i\omega\psi_1$. Therefore, since $\varphi_\omega$ is a ground state of~\eqref{eq:scalar-snlkg}, we have
\begin{multline*}
S(\Psi)=
\frac{1}{2} \norm{\nabla \psi_1}_2^2+\frac{1}{2}\left(m-\omega^2\right) \norm{\psi_1}_2^2-\frac{1}{p+1}\norm{\psi_1}_{p+1}^{p+1}
\\
\geq
\frac{1}{2} \norm{\nabla \varphi_\omega}_2^2+\frac{1}{2}\left(m-\omega^2\right) \norm{\varphi_\omega}_2^2-\frac{1}{p+1}\norm{\varphi_\omega}_{p+1}^{p+1}=S(\Phi_{\omega,0}).
\end{multline*}
Therefore when $v=0$, we indeed have $\Psi=\Phi_{\omega,0}$. Let us now treat the case $v\neq 0$. Let $\tilde\psi_1$ be such that $\psi_1(x)=e^{-i\omega\gamma v\cdot x}\tilde\psi_1(x+(\gamma-1)x_v)$ and define $\tilde\psi_2:=i\omega\tilde\psi_1$. Then $\tilde\Psi:=(\tilde\psi_1,\tilde\psi_2)$ is a solution to~\eqref{eq:snlkg} with $v=0$. Indeed, it is not hard to see that
\[
-\Delta \psi_1+i\frac\omega\gamma \psi_2-v\cdot\nabla\psi_2=e^{-i\omega\gamma v\cdot x}\left(-\Delta\tilde\psi_1-\omega^2\tilde\psi_1\right).
\]
Hence 
\[
 S(\Psi)=\frac1\gamma(E+\omega Q)(\tilde\Psi)\geq\frac1\gamma (E+\omega Q)(\Phi_{\omega,0})=S(\Phi_{\omega,v}).
\]
This implies that $\Psi=\Phi_{\omega,v}$ for any $v$ and finishes the proof of Proposition~\ref{prop:variational}.
\end{proof}

\subsection{Kernel}

\begin{lemma}\label{lem:kernel}
The following description holds for the kernel of $S''(\Phi_{\omega,v})$:
\[
\mathrm{Ker}(S''(\Phi_{\omega,v}))=\Span\{ i\Phi_{\omega,v} , \nabla  \Phi_{\omega,v}\}.
\]
\end{lemma}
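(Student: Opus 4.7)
The plan is to exploit the reformulation~\eqref{eq:S-reformulated} of $S$ as a sum of a standing-wave-type functional and a manifestly non-negative term; this decouples the Hessian analysis into a block reducing to the $v=0$ case and a trivial positive-definite block. For the inclusion $\Span\{i\Phi_{\omega,v},\nabla\Phi_{\omega,v}\}\subseteq\mathrm{Ker}(S''(\Phi_{\omega,v}))$: the gauge and translation transformations preserve $E$, $Q$, $P$ (direct verification), hence they preserve $S$. Differentiating $S(e^{i\theta}W)=S(W)$ first in $\theta$ and then in $W$ along $V$, and evaluating at $\theta=0$, $W=\Phi_{\omega,v}$ (using $S'(\Phi_{\omega,v})=0$), yields $\langle S''(\Phi_{\omega,v})V,i\Phi_{\omega,v}\rangle=0$ for every $V$, so by self-adjointness $i\Phi_{\omega,v}\in\mathrm{Ker}(S''(\Phi_{\omega,v}))$. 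The identical argument applied to the translation invariance gives $\partial_{x_j}\Phi_{\omega,v}\in\mathrm{Ker}(S''(\Phi_{\omega,v}))$ for $j=1,\dots,d$, and these $d+1$ vectors are readily seen to be linearly independent from their explicit form.

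For the reverse inclusion, introduce the linear bijection $\Lambda\colon H^1\times L^2\to H^1\times L^2$ defined by $\Lambda(h_1,h_2):=(\tilde h_1,\xi)$, where $\tilde h_1(y):=e^{i\omega\gamma v\cdot x(y)}h_1(x(y))$ with $y=x+(\gamma-1)x_v$, and $\xi:=v\cdot\nabla h_1-i\tfrac{\omega}{\gamma}h_1+h_2$. (The inverse recovers $h_1$ by undoing the twist, then $h_2=\xi+i\tfrac{\omega}{\gamma}h_1-v\cdot\nabla h_1$.) A short calculation using $\gamma^{-2}=1-|v|^2$ shows that $\xi$ vanishes at $\Phi_{\omega,v}$, and combined with the Jacobian identity $\|w_1\|_{p+1}^{p+1}=\gamma^{-1}\|\tilde w_1\|_{p+1}^{p+1}$ the formula~\eqref{eq:S-reformulated} rearranges as $S(W)=\tfrac{1}{\gamma}\mathcal E(\tilde w_1)+\tfrac12\|\xi\|_2^2$, where $\mathcal E(u):=\tfrac12\|\nabla u\|_2^2+\tfrac12(m-\omega^2)\|u\|_2^2-\tfrac{1}{p+1}\|u\|_{p+1}^{p+1}$. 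Taking the second variation at $\Phi_{\omega,v}$ gives
\[
\langle S''(\Phi_{\omega,v})H,H\rangle=\tfrac{1}{\gamma}\langle\mathcal E''(\varphi_\omega)\tilde h_1,\tilde h_1\rangle+\|\xi\|_2^2.
\]
Since $\Lambda$ is bijective, $\tilde h_1$ and $\xi$ can be varied independently, so after polarization $H\in\mathrm{Ker}(S''(\Phi_{\omega,v}))$ if and only if $\xi=0$ and $\tilde h_1\in\mathrm{Ker}(\mathcal E''(\varphi_\omega))$.

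To identify $\mathrm{Ker}(\mathcal E''(\varphi_\omega))$, decompose $\tilde h_1=\tilde a+i\tilde b$ with $\tilde a,\tilde b$ real. The quadratic form block-diagonalizes as $\langle\tilde L_+\tilde a,\tilde a\rangle+\langle\tilde L_-\tilde b,\tilde b\rangle$ for the Schr\"odinger operators $\tilde L_+:=-\Delta+(m-\omega^2)-p\varphi_\omega^{p-1}$ and $\tilde L_-:=-\Delta+(m-\omega^2)-\varphi_\omega^{p-1}$. Since $\varphi_\omega>0$ and $\tilde L_-\varphi_\omega=0$, simplicity of the ground state of a Schr\"odinger operator gives $\mathrm{Ker}(\tilde L_-)=\Span\{\varphi_\omega\}$. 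For $\tilde L_+$ we invoke, via the scaling~\eqref{eq:scaled-phi}, the classical non-degeneracy theorem of Kwong~\cite{Kw89}, which asserts $\mathrm{Ker}(\tilde L_+)=\Span\{\partial_{x_1}\varphi_\omega,\dots,\partial_{x_d}\varphi_\omega\}$. Consequently $\mathrm{Ker}(\mathcal E''(\varphi_\omega))$ has dimension $d+1$, so $\mathrm{Ker}(S''(\Phi_{\omega,v}))$ also has dimension $d+1$ and hence coincides with the span produced in the first step. The only non-routine ingredient is Kwong's non-degeneracy theorem; the remainder of the argument merely unpacks the Lorentz boost as a change of coordinates.
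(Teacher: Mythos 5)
Your proposal is correct, and it reaches the same two external inputs as the paper (the invariances for the inclusion $\supset$, and the Weinstein--Kwong non-degeneracy of the scalar linearization for the inclusion $\subset$), but the reduction to the scalar problem is carried out differently. The paper takes $W\in\mathrm{Ker}(S''(\Phi_{\omega,v}))$, writes the linearized system it satisfies, and conjugates that system by the Lorentz change of variables through a ``lengthy but straightforward computation'', landing on~\eqref{eq:known-ground} and then identifying the transformed kernel elements explicitly, which immediately gives $W\in\Span\{\nabla\Phi_{\omega,v},i\Phi_{\omega,v}\}$. You instead work at the level of the functional: starting from~\eqref{eq:S-reformulated} and the (correct) Jacobian identity $\norm{w_1}_{p+1}^{p+1}=\gamma^{-1}\norm{\tilde w_1}_{p+1}^{p+1}$ (the map $x\mapsto x+(\gamma-1)x_v$ has determinant $\gamma$), you write $S=\frac1\gamma\,\mathcal E\circ\Lambda_1+\frac12\norm{\Lambda_2\,\cdot\,}_2^2$ with $\Lambda=(\Lambda_1,\Lambda_2)$ a bounded linear bijection of $H^1\times L^2$, so that the Hessian block-diagonalizes, $\dual{S''(\Phi_{\omega,v})H}{H}=\frac1\gamma\dual{\mathcal E''(\varphi_\omega)\tilde h_1}{\tilde h_1}+\norm{\xi}_2^2$ (using $\Lambda_1\Phi_{\omega,v}=\varphi_\omega$ and the fact that $\xi(\Phi_{\omega,v})=0$ is precisely the second equation of~\eqref{eq:snlkg}, though for the quadratic block this vanishing is not even needed). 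The polarization step is the right one: the kernel is the radical of the bilinear form, and since $\Lambda_1K,\Lambda_2K$ range independently, $H$ is in the kernel iff $\xi=0$ and $\tilde h_1\in\mathrm{Ker}(\mathcal E''(\varphi_\omega))$; the $L_\pm$ splitting and the kernels $\mathrm{Ker}(\tilde L_-)=\Span\{\varphi_\omega\}$, $\mathrm{Ker}(\tilde L_+)=\Span\{\partial_{x_j}\varphi_\omega\}$ are exactly the facts the paper cites from~\cite{We85,Kw89,ChGuNaTs07}. What your route buys is the avoidance of the componentwise conjugation of the linearized operator, replaced by a clean quadratic-form argument that recycles the identity~\eqref{eq:S-reformulated} already used elsewhere (e.g.\ in the coercivity proof); the small price is that you must check the bijectivity of $\Lambda$, the standard identification of the operator kernel with the form radical (elliptic regularity), and the linear independence of the $d+1$ generators, since you conclude by a dimension count rather than by exhibiting, as the paper does, the explicit correspondence between kernel elements of $S''(\Phi_{\omega,v})$ and of the $v=0$ problem.
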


\begin{proof}
The inclusion $\supset$ is easy to obtain. Indeed, due to invariance by translation and phase shifts,  for any $\theta\in\R$ and $y\in\R^d$ we have
\[
S'(e^{i\theta}\Phi_{\omega,v}(\cdot+y))=0.
\]
The result is obtained by deriving with respect to $\theta$ and $y$ at $\theta=0,y=0$. The reverse inclusion is much more delicate. We shall rely on existing results for standing waves of \textsc{nls} to prove it. First remark that if $W=(w_1,w_2)$ belongs to the kernel of $S''(\Phi_{\omega,v})$, then it satisfies 
\[
\left\{
\begin{aligned}
-\Delta w_1+m^2w_1-(p-1)|\Phi_{\omega,v}^1|^{p-3}\Phi_{\omega,v}^1\Re(\Phi_{\omega,v}^1\bar w_1)\qquad\\+|\Phi_{\omega,v}^1|^{p-1}w_1+i\frac{\omega}{\gamma}w_2-v\cdot\nabla w_2=0,\\
w_2-i\frac{\omega}{\gamma}w_1+v\cdot\nabla w_1=0.
\end{aligned}
\right.
\]
Here, we have denoted by $\Phi^1_{\omega,v}$ the first component of $\Phi_{\omega,v}$, i.e.
\[
\Phi_{\omega,v}^1:=e^{-i\gamma\omega v\cdot x}\varphi_{\omega}(x+(\gamma-1)x_{v}).
\]
Take $\tilde W:=(\tilde w_1,\tilde w_2)$ such that
\begin{equation*}
\begin{pmatrix} w_1\\ w_2\end{pmatrix}=e^{-i\gamma \omega v\cdot x}\begin{pmatrix}\tilde w_1(x+(\gamma-1)x_v)\\\gamma\tilde w_2(x+(\gamma-1)x_v)-\gamma v\nabla \tilde w_1(x+(\gamma-1)x_v)\end{pmatrix}
\end{equation*}
It is a lengthy but straightforward computation to verify that $\tilde W$ satisfies
\[
\left\{
\begin{aligned}
-\Delta \tilde w_1+m^2\tilde w_1-(p-1)|\Phi_{\omega,0}^1|^{p-3}\Phi_{\omega,0}^1\Re(\Phi_{\omega,0}^1\overline {\tilde w_1})\qquad\\-|\Phi_{\omega,0}^1|^{p-1}\tilde w_1+i{\omega}\tilde w_2=0,
\\
\tilde w_2-i{\omega}\tilde w_1=0.
\end{aligned}
\right.
\]
Remembering now that $\Phi_{\omega,0}^1=\varphi_\omega$ and using the second equation to substitute in the first we get
\begin{equation}\label{eq:known-ground}
\left\{
\begin{aligned}
-\Delta \tilde w_1+(m-\omega^2)\tilde w_1-(p-1)|\varphi_\omega|^{p-1}\Re( \tilde w_1)-|\varphi_\omega|^{p-1}\tilde w_1&=0,
\\
\tilde w_2&=i{\omega}\tilde w_1.
\end{aligned}
\right.
\end{equation}
Fortunately we arrive on a known ground: it is well-known since the celebrated work of Weinstein~\cite{We85} and Kwong~\cite{Kw89} (see~\cite{ChGuNaTs07} for a modern short proof of this result) that the only solutions to~\eqref{eq:known-ground} are 
\[
\binom{\tilde w_1}{\tilde w_2}\in\Span\left\{\binom{\nabla \varphi_\omega}{i\omega\nabla\varphi_\omega};\binom{i \varphi_\omega}{-\omega\varphi_\omega}\right\}=\Span\left\{\nabla\Phi_{\omega,0};i\Phi_{\omega,0}\right\}.
\]
Coming back into the original variables, this implies that 
\[
W\in\Span\left\{\nabla\Phi_{\omega,v};i\Phi_{\omega,v}\right\}
\]
and finishes the proof.
\end{proof}

\subsection{Coercivity}

The proof of our result relies on the fact that the solitary waves we are considering are stable. In particular, we have at our disposal a coercivity property on the Hessian of the action $S$ related to the soliton profile $\Phi_{\omega,v}$ which allows us to control the difference between a soliton and a function in a neighborhood of its orbit. The coercivity property is the following.

\begin{lemma}[Coercivity]\label{lem:coercivity}
Assume $p<1+\frac4d$ and let $\omega\in\R$ and $v\in\R^d$ be such that
$
\frac{1}{1+\frac{4}{p-1}-d}<\frac{\omega^2}{m}<1,$ and $|v|<1$ (i.e. they are compatible with $\mathcal O_{\rm stab}$ defined in~\eqref{eq:O-stab}) and let $\Phi_{\omega,v}$ be the associated ground state.
There exists $\delta$ such that for any $W\in H^1(\R^d)\times L^2(\R^d)$ satisfying the orthogonality conditions 
\begin{equation}\label{eq:moving-ortho}
\psld{ W}{\nabla\Phi_{\omega,v}}=\psld{W}{iJ\Phi_{\omega,v}}=\psld{W}{i\Phi_{\omega,v}}=0
\end{equation}
we have
\[
H_{\omega,v}(W)\geq \delta\norm*{W}^2_{H^1\times L^2}.
\]
where for brevity in notation 
 we defined
\[
H(W):=\dual{S''(\Phi_{\omega,v})W}{W}.
\]
\end{lemma}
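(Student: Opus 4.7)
The plan is to extract the full spectral structure of $T:=S''(\Phi_{\omega,v})$ from the variational characterizations of Proposition~\ref{prop:variational} and the kernel description of Lemma~\ref{lem:kernel}, and then to observe that the three orthogonality conditions in~\eqref{eq:moving-ortho} simultaneously remove the kernel and the unique negative direction of $T$. Using the substitution already performed in the proof of Lemma~\ref{lem:kernel} I first reduce to $v=0$, so that $\Phi_\omega=(\varphi_\omega,i\omega\varphi_\omega)$ and $S=E+\omega Q$.

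I would then establish that $T$ has essential spectrum contained in $[c,\infty)$ with $c>0$, kernel $\Span\{i\Phi_\omega,\nabla\Phi_\omega\}$, and Morse index equal to one. The essential spectrum bound comes from Weyl's theorem: the nonlinear contribution to $T$ is multiplication by exponentially decaying functions (relatively compact perturbation), while the remaining constant-coefficient operator has symbol $\bigl(\begin{smallmatrix}|\xi|^2+m & i\omega\\ -i\omega & 1\end{smallmatrix}\bigr)$ whose smallest eigenvalue $\tfrac{1}{2}(m+1-\sqrt{(m-1)^2+4\omega^2})$ is strictly positive whenever $\omega^2<m$. The kernel is given by Lemma~\ref{lem:kernel}. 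For the Morse index, a direct computation using $I(\Phi_\omega)=0$ yields
\[
\langle T\Phi_\omega,\Phi_\omega\rangle=\frac{d^2}{ds^2}S(s\Phi_\omega)\Big|_{s=1}=-(p-1)\|\varphi_\omega\|_{p+1}^{p+1}<0,
\]
giving at least one negative direction, while Proposition~\ref{prop:variational} realises $\Phi_\omega$ as a minimiser of $S$ on the codimension-one Nehari manifold $\{I=0\}$, forcing $T\geq0$ on the corresponding tangent hyperplane and bounding the Morse index by one.

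The final ingredient identifies the negative direction and relates it to $iJ\Phi_\omega$. Since $S=E+\omega Q$ depends affinely on $\omega$ and $\Phi_\omega$ solves $S'(\Phi_\omega)=0$, differentiating in $\omega$ yields $T\,\partial_\omega\Phi_\omega=-Q'(\Phi_\omega)=-iJ\Phi_\omega$, hence
\[
\langle T\,\partial_\omega\Phi_\omega,W\rangle=-\psld{W}{iJ\Phi_\omega}\qquad\text{for every }W.
\]
Pairing with $\partial_\omega\Phi_\omega$ itself and using $Q(\Phi_\omega)=-\omega\|\varphi_\omega\|_2^2$ together with the scaling~\eqref{eq:scaled-phi}, one checks that $\langle T\,\partial_\omega\Phi_\omega,\partial_\omega\Phi_\omega\rangle<0$ is \emph{exactly equivalent} to the stability range $\omega^2/m>1/(1+4/(p-1)-d)$ defining $\mathcal O_{\rm stab}$. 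Thus, under our assumption, $\partial_\omega\Phi_\omega$ has a non-trivial projection on the negative eigenspace of $T$, and the condition $\psld{W}{iJ\Phi_\omega}=0$ is precisely the $T$-orthogonality to $\partial_\omega\Phi_\omega$. Combined with the two other conditions in~\eqref{eq:moving-ortho}, which remove the kernel, a classical constrained minimisation argument then delivers $H(W)\geq\delta\|W\|_{H^1\times L^2}^2$ for some $\delta>0$; the equivalence between the graph norm of $T$ on the constrained subspace and the $H^1\times L^2$ norm is provided by Lemma~\ref{lem:rewriting-trick}.

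The main obstacle I expect is the spectral step: pinning down the Morse index to be exactly one by combining the strict local maximum of $S$ along the scaling ray with the Nehari minimiser characterisation, and then converting the abstract spectral gap into a uniform coercivity constant in the $H^1\times L^2$ norm once the orthogonality conditions are imposed. Once this is done, the algebraic link between $\partial_\omega\Phi_\omega$ and $iJ\Phi_\omega$ via differentiation of the critical-point equation makes the role of the third orthogonality condition transparent.
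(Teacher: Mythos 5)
Your overall architecture coincides with the paper's proof: positive essential spectrum away from zero, kernel given by Lemma~\ref{lem:kernel}, Morse index one deduced from the variational characterization, then the identity $S''(\Phi_{\omega,v})\Lambda_{\omega}\Phi_{\omega,v}=-\frac1\gamma iJ\Phi_{\omega,v}$ together with the fact that $\dual{S''(\Phi_{\omega,v})\Lambda_\omega\Phi_{\omega,v}}{\Lambda_\omega\Phi_{\omega,v}}<0$ exactly on the stability range (this is the paper's computation~\eqref{eq:negativity-1}), and finally positivity on the constrained subspace upgraded to coercivity through a compactness argument resting on Lemma~\ref{lem:rewriting-trick}. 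Your Nehari-minimiser argument for the Morse index bound is a reasonable substitute for the paper's citation, and your ``classical constrained minimisation argument'' is, in substance, the paper's Step 2 (decomposition along the negative eigenvector plus Cauchy--Schwarz on the positive subspace) and Step 3.

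The genuine gap is your opening move, the reduction to $v=0$. The substitution used in the proof of Lemma~\ref{lem:kernel} is not an isometry of $L^2\times L^2$: the second component mixes $\tilde w_2$ with $v\cdot\nabla\tilde w_1$, and the map $x\mapsto x+(\gamma-1)x_v$ has Jacobian $\gamma$. As a consequence, (i) the quadratic form transforms into $\frac1\gamma\bigl(\norm{\nabla\tilde w_1}_2^2+(m-\omega^2)\norm{\tilde w_1}_2^2-\int(\dots)\bigr)+\gamma\norm{\tilde w_2-i\omega\tilde w_1}_2^2$, which is not $H_{\omega,0}(\tilde W)$ (though it is bounded below by $\frac1\gamma H_{\omega,0}(\tilde W)$, so this part is repairable), and, more seriously, (ii) the three hypotheses~\eqref{eq:moving-ortho}, which are $L^2\times L^2$ pairings against $\nabla\Phi_{\omega,v}$, $iJ\Phi_{\omega,v}$, $i\Phi_{\omega,v}$, do \emph{not} become the corresponding pairings against $\nabla\Phi_{\omega,0}$, $iJ\Phi_{\omega,0}$, $i\Phi_{\omega,0}$ under this change of variables; the transformed constraints are pairings against three different functions. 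Hence the $v=0$ case of the lemma cannot be invoked as a black box, and your flat-symbol computation only controls the untransformed $v=0$ operator. The fix is simply to run your argument directly in the boosted frame, which is what the paper does: the essential-spectrum bound follows from the factorization~\eqref{eq:S-reformulated} combined with Lemma~\ref{lem:rewriting-trick} instead of the symbol; the identity $S''(\Phi_{\omega,v})\Lambda_\omega\Phi_{\omega,v}=-\frac1\gamma iJ\Phi_{\omega,v}$ and the sign of $\partial_\omega\bigl(\tfrac{\omega}{\gamma}\norm{\varphi_\omega}_2^2\bigr)$ go through verbatim with the $\frac1\gamma$ factors; and the constrained positivity and the final contradiction argument use only the orthogonality conditions exactly as written in~\eqref{eq:moving-ortho}.
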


Similar results date back to the work  of  Weinstein~\cite{We85,We86}  for \textsc{nls} equations. These ideas were later  generalized by Grillakis, Shatah and Strauss~\cite{GrShSt87} in a abstract setting. More recently, Stuart~\cite{St01} described precisely the orbital stability of solitons of NLKG using also  a coercivity statement, but with different orthogonality conditions and a slightly more complicated proof.

\begin{proof}[Proof of Lemma~\ref{lem:coercivity}]
\emph{Step 1 : Analysis of the spectrum of $S''(\Phi_{\omega,v})$.}

We first remark that, due the exponential localization of $\Phi_{\omega,v}$, the operator $S''(\Phi_{\omega,v})$ is a compact perturbation of the self-adjoint operator
\[
\mathbb L:=
\begin{pmatrix}
-\Delta+m&0\\
0&1
\end{pmatrix}
+
\frac\omega\gamma\begin{pmatrix}
0&i\\
-i&0
\end{pmatrix}
+
v\cdot
\begin{pmatrix}
0&-\nabla\\
\nabla&0
\end{pmatrix},\quad D(\mathbb L)=H^2(\R^d)\times H^1(\R^d).
\]
By Weyl's Theorem, $S''(\Phi_{\omega,v})$ and $\mathbb L$ share the same essential spectrum, that we now analyze.
Observe that for $W=(w_1,w_2)\in H^2(\R^d)\times H^1(\R^d)$ we have
\[
\dual{\mathbb L W}{W}=\norm{\nabla w_1}_2^2+m\norm{w_1}_2^2+\norm{w_2}_2^2-2\frac\omega\gamma\psld{i w_1}{w_2}+2v\cdot\psld{\nabla w_1}{w_2},
\]
which, after some factorizations (similar to those used in~\eqref{eq:S-reformulated}), we can rewrite 
\[
\dual{\mathbb L W}{W}=
\frac1\gamma\left( \norm{\nabla \tilde w_1}_2^2+\left(m-\omega^2\right) \norm{\tilde w_1}_2^2\right)
+\norm[\big]{v\cdot\nabla w_1-i\frac{\omega}{\gamma}w_1+w_2}_2^2
\]
where $\tilde w_1$ is such that $ w_1(x)=e^{-i\omega \gamma v\cdot x}\tilde w_1(x+(\gamma-1)x_v)$. 
From Lemma~\ref{lem:rewriting-trick}, we see that there exists $\delta>0$ such that for any $W\in H^2(\R^d)\times H^1(\R^d)$ we have 
\[
\dual{\mathbb L W}{W}\geq \delta\norm{W}_{H^1\times L^2}^2.
\]
This implies that the essential spectrum of $S''(\Phi_{\omega,v})$ is positive and away from $0$. The rest of its spectrum consists in a finite number of isolated eigenvalues. It turns out that from the variational characterization of $\Phi_{\omega,v}$, we can infer that $S''(\Phi_{\omega,v})$ has Morse Index 1, i.e. it admits only one negative simple eigenvalue (see e.g.~\cite{AmMa07}). We denote this eigenvalue by $-\lambda<0$, and $\Psi$ an associated normalized eigenvector, i.e. $S''(\Phi_{\omega,v})\Psi=-\lambda\Psi$ and $\norm{\Psi}_{L^2\times L^2}=1$. 

\emph{Step 2 : A positivity property.
We prove now that if $W\in H^1(\R^d)\times L^2(\R^d)$ satisfies the orthogonality conditions~\eqref{eq:moving-ortho}, then 
\[
\dual{S''(\Phi_{\omega,v})W}{W}>0.
\]
}
A particular vector associated with $S''(\Phi_{\omega,v})$ is $\Lambda_{\omega}\Phi_{\omega,v}$, where $\Lambda_{\omega}:=\frac{\partial}{\partial \omega}$. Indeed, deriving $S'(\Phi_{\omega,v})=0$ with respect to $\omega$ we get 
\begin{equation}\label{eq:miracle}
S''(\Phi_{\omega,v})\Lambda_{\omega}\Phi_{\omega,v}=-\frac1\gamma Q'(\Phi_{\omega,v})=-\frac1\gamma iJ\Phi_{\omega,v}.
\end{equation}
This implies, using~\eqref{eq:scaled-phi},
\begin{multline}\label{eq:negativity-1}
\dual{S''(\Phi_{\omega,v})\Lambda_{\omega}\Phi_{\omega,v}}{\Lambda_{\omega}\Phi_{\omega,v}}=-\frac1\gamma\dual{Q'(\Phi_{\omega,v})}{\Lambda_{\omega}\Phi_{\omega,v}}=-\frac1\gamma\Lambda_{\omega}Q(\Phi_{\omega,v})
\\
=\Lambda_\omega\left(\frac{\omega}{\gamma}\norm{\varphi_\omega}_2^2\right)=
\frac{(m-\omega^2)^{\frac{2}{p-1}-\frac d2}}{\gamma}\left(-\frac{2\omega^2\left(\frac{2}{p-1}+\frac d2\right)}{m-\omega^2}-1\right)\norm{\tilde \varphi}_2^2
<0
\end{multline}
where the last inequality follows from the fact that $\omega$ is compatible with $\mathcal O_{\mathrm{stab}}$ (see the definition~\eqref{eq:O-stab}). It is easy to verify that  $\Lambda_{\omega}\Phi_{\omega,v}$ is orthogonal to the kernel of $S''(\Phi_{\omega,v})$, namely that we have
\[
\psld{\Lambda_{\omega}\Phi_{\omega,v}}{i\Phi_{\omega,v}}=\psld{\Lambda_{\omega}\Phi_{\omega,v}}{\nabla\Phi_{\omega,v}}=0.
\]
Let us write the orthogonal decomposition of $\Lambda_{\omega}\Phi_{\omega,v}$ along the spectrum of $S''(\Phi_{\omega,v})$:
\begin{equation}\label{eq:spectral-dec-1}
\Lambda_{\omega}\Phi_{\omega,v}=\alpha \Psi+\Pi,
\end{equation}
where $\alpha\neq0$ and $\Pi$ is in the positive eigenspace of $S''(\Phi_{\omega,v})$, in particular
\[
\dual{S''(\Phi_{\omega,v})\Pi}{\Pi}\geq \delta\norm{\Pi}_{H^1\times L^2}^2.
\]
From~\eqref{eq:negativity-1} and~\eqref{eq:spectral-dec-1}, we infer that 
\begin{equation}\label{eq:negativity-2}
-\lambda\alpha^2+\dual{S''(\Phi_{\omega,v})\Pi}{\Pi}=\dual{S''(\Phi_{\omega,v})\Lambda_{\omega}\Phi_{\omega,v}}{\Lambda_{\omega}\Phi_{\omega,v}}<0
\end{equation}

Take now $W\in H^1(\R^d)\times L^2(\R^d)$ satisfying the orthogonality conditions~\eqref{eq:moving-ortho}. We also write the orthogonal decomposition of $W$ along the spectrum of $S''(\Phi_{\omega,v})$:
\begin{equation}\label{eq:spectral-dec-2}
W=\beta \Psi+\Xi,
\end{equation}
where $\beta\in\R$ and $\Xi$ is in the positive eigenspace of $S''(\Phi_{\omega,v})$. If $\beta=0$, the conclusion follows, so we assume $\beta\neq0$. Using~\eqref{eq:moving-ortho},~\eqref{eq:miracle},~\eqref{eq:spectral-dec-1} and~\eqref{eq:spectral-dec-2}, we have
\[
0=\dual{S''(\Phi_{\omega,v})\Lambda_{\omega}\Phi_{\omega,v}}{W}=-\lambda\alpha\beta+ \dual{S''(\Phi_{\omega,v})\Pi}{\Xi}
\]
Note that on the positive spectral subspace of $S''(\Phi_{\omega,v})$, Cauchy-Schwartz inequality holds:
\[
\dual{S''(\Phi_{\omega,v})\Pi}{\Xi}^2\leq \dual{S''(\Phi_{\omega,v})\Pi}{\Pi}\dual{S''(\Phi_{\omega,v})\Xi}{\Xi}.
\]
Therefore, 
\begin{multline*}
\dual{S''(\Phi_{\omega,v})W}{W}=-\lambda\beta^2+\dual{S''(\Phi_{\omega,v})\Xi}{\Xi}\geq
-\lambda\beta^2+ \frac{\dual{S''(\Phi_{\omega,v})\Pi}{\Xi}^2}{\dual{S''(\Phi_{\omega,v})\Pi}{\Pi}}
> -\lambda\beta^2+\frac{(-\lambda\alpha\beta)^2}{\lambda\alpha^2}=0
\end{multline*}

\emph{Step 3. The coercivity property.}

Assume by contradiction that there exists $(W_n=(w^n_1,w_2^n))\subset H^1(\R^d)\times L^2(\R^d)$ satisfying the orthogonality conditions~\eqref{eq:moving-ortho} and such that 
\[
\norm{W_n}_{H^1\times L^2}=1\qquad\text{and}\qquad\lim_{n\to+\infty}\dual{S''(\Phi_{\omega,v})W_n}{W_n}=0.
\]
Recall that, as for~\eqref{eq:S-reformulated}, we have 
\begin{multline*}
\dual{S''(\Phi_{\omega,v})W_n}{W_n}=\frac1\gamma\left( \norm{\nabla \tilde w^n_1}_2^2+\left(m-\omega^2\right) \norm{\tilde w^n_1}_2^2\right)
+\norm[\big]{v\cdot\nabla w_1-i\frac{\omega}{\gamma}w_1+w_2}_2^2
\\
-\int_{\R^d}\left(
(p-1)|\Phi_{\omega,v}^1|^{p-3}\Re(\Phi_{\omega,v}^1\bar w^n_1)^2+|\Phi_{\omega,v}^1|^{p-1}|w^n_1|^2\right)dx.
\end{multline*}
where $\tilde w_1^n$ is such that $ w_1^n(x)=e^{-i\omega \gamma v\cdot x}\tilde w_1^n(x+(\gamma-1)x_v)$. 

Since $(W_n)$ is bounded in $H^1(\R^d)\times L^2(\R^d)$, there exists $W\in H^1(\R^d)\times L^2(\R^d)$ such that 
\[
W_n\rightharpoonup W\;\text{as }n\to+\infty\;\text{ weakly in } H^1(\R^d)\times L^2(\R^d).
\]
On one hand $W$ must  satisfy~\eqref{eq:moving-ortho} and from Step 2 we have, if $W\neq 0$. 
\[
\dual{S''(\Phi_{\omega,v})W}{W}>0.
\]
On the other hand, by weak convergence and exponential decay of $\Phi_{\omega,v}$ we have
\[
\dual{S''(\Phi_{\omega,v})W}{W}\leq \liminf_{n\to+\infty}\dual{S''(\Phi_{\omega,v})W_n}{W_n}=0.
\]
Therefore $W$ must be $W\equiv 0$. However, in this case it would implies 
\[
-\int_{\R^d}\left(
(p-1)|\Phi_{\omega,v}^1|^{p-3}\Re(\Phi_{\omega,v}^1\bar w^n_1)^2+|\Phi_{\omega,v}^1|^{p-1}|w^n_1|^2\right)dx\to0
\]
and since $\norm{W_n}_{H^1\times L^2}=1$, we would have (using Lemma~\ref{lem:rewriting-trick})
\[
\dual{S''(\Phi_{\omega,v})W_n}{W_n}\geq \delta>0,
\]
which is a contradiction.
\end{proof}

\section{Modulation Theory}\label{sec:modulation}

The use of a coercivity property similar to Lemma~\ref{lem:coercivity} but adapted to a multi-soliton (see Lemma~\ref{lem:multi-coercivity}) will require to deal with orthogonality conditions. These  orthogonality conditions will obtained by modulation.

Given parameters $(\varepsilon,L)$, consider a neighborhood of the sum of solitons
\[
\mathcal U(\varepsilon,L):=\Bigg\{
U\in H^1(\R^d)\times L^2(\R^d);\inf_{\substack{\xi_j>\xi_{j-1}+L\\\vartheta_j\in\R\\j=1,\dots,N}} \norm[\bigg]{U-\sum_{j=1}^N  e^{i\vartheta_j}\Phi_j(\cdot-\xi_j) }_{H^1\times L^2}<\varepsilon
\Bigg\}
\]

The main result of this section is the following.

\begin{proposition}[Dynamical Modulation]\label{prop:dyn-mod}
There exists $\tilde\varepsilon,\tilde L,C,\tilde C>0$ such that for any $0<\varepsilon<\tilde\varepsilon$ and $L>\tilde L$ the following property is verified. \\
Let $U(t,x)=(u_1,u_2)(t,x)$ be a solution of~\eqref{eq:nlkg2} satisfying on a time interval $I$ 
\[
U\in\mathcal{U}(\varepsilon,L),\qquad\text{ for all }t\in I.
\]
 For $j=1,\dots,N$, there exist (unique) $\mathcal C^1$ functions
\[ 
\tilde \theta_j:I\to \R, \qquad\tilde\omega_j:I\to (-\sqrt{m},\sqrt{m}),\qquad \tilde x_j:I\to \R^d,
\]
 such that if we define $\tilde R_j(t)$ and $\Upsilon(t)$ by
\begin{equation}\label{eq:def-tilde-Phi}
\tilde R_j(t)=e^{i\tilde\theta_j(t)}\Phi_{\tilde\omega_j(t),v_j}(\cdot-\tilde x_j(t)),\qquad
\Upsilon(t)=U(t)- \sum_{j=1}^N \tilde R_j(t),
\end{equation}
 then $\Upsilon$ satisfies for all $t\in I$ the orthogonality conditions 
\begin{equation}\label{eq:ortho-dyn}
\psld{\Upsilon}{i \tilde R_j}=\psld{\Upsilon}{iJ\tilde R_j}=\psld{\Upsilon}{\nabla\tilde R_j}=0,\quad j=1,\dots,N.
\end{equation}
Moreover,  for all $t\in I$ we have
\[
\norm{\Upsilon}_{H^1\times L^2}+\sum_{j=1}^N|\tilde \omega_j-\omega_j|\leq \tilde C\varepsilon,\qquad \tilde x_{j+1}-\tilde x_j>\frac{L}{2},\; j=1,\dots,N-1,
\]
and the derivatives in time verify
\begin{equation}\label{eq:parameters-derivatives-estimate}
\sum_{j=1}^ N\left(|\partial_t\tilde\omega_j|+\left|\partial_t\tilde\theta_j-\frac{\tilde\omega_j}{\gamma_j}\right|^2+|\partial_t \tilde x_j-v_j|^2\right)<C\left(\norm{\Upsilon}_2^2+e^{-3\alpha\sqrt{m-\omega^2_\star}v_\star t}\right).
\end{equation}
\end{proposition}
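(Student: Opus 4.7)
For each $U\in H^1(\R^d)\times L^2(\R^d)$, consider the map $F:(\tilde\theta,\tilde\omega,\tilde x)\in\R^N\times \R^N\times(\R^d)^N\mapsto \R^{N(d+2)}$ whose $j$-th block encodes the three orthogonality relations,
\begin{equation*}
F_j(\tilde\theta,\tilde\omega,\tilde x):=\bigl(\psld{\Upsilon}{i\tilde R_j},\;\psld{\Upsilon}{iJ\tilde R_j},\;\psld{\Upsilon}{\nabla\tilde R_j}\bigr),\qquad \Upsilon:=U-\sum_k \tilde R_k.
\end{equation*}
At the reference configuration $U_{\rm ref}=\sum_k e^{i\vartheta_k^0}\Phi_k(\cdot-\xi_k^0)$ with $(\tilde\theta,\tilde\omega,\tilde x)=(\vartheta^0,\omega,\xi^0)$, we have $\Upsilon=0$, and the Jacobian $D_{(\tilde\theta,\tilde\omega,\tilde x)}F$ is a block operator whose diagonal block associated with index $j$ is the Gram matrix pairing $\{-i\tilde R_j,\,-\Lambda_\omega\tilde R_j,\,\nabla\tilde R_j\}$ against $\{i\tilde R_j,\,iJ\tilde R_j,\,\nabla\tilde R_j\}$.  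The only non-obvious pairing is $\psld{\Lambda_\omega\Phi_j}{iJ\Phi_j}=\Lambda_\omega Q(\Phi_j)$, which is nonzero on $\mathcal O_{\rm stab}$ by exactly the computation already performed in Step~2 of the proof of Lemma~\ref{lem:coercivity}; the cross-pairings $\psld{i\Phi_j}{iJ\Phi_j}$ and $\psld{\nabla\Phi_j}{iJ\Phi_j}$ vanish after an integration by parts. Off-diagonal blocks involve inner products of $\Phi_j$ against translates separated by distance $\geq L$ and are $O(e^{-cL})$ by~\eqref{eq:exponential-decay}. For $L$ large and $\varepsilon$ small, the Jacobian is invertible as a perturbation of a block-diagonal matrix, so the implicit function theorem supplies unique $\mathcal C^1$ parameters achieving~\eqref{eq:ortho-dyn} together with the bounds $\norm{\Upsilon}_{H^1\times L^2}+\sum_j|\tilde\omega_j-\omega_j|\leq \tilde C\varepsilon$ and $\tilde x_{j+1}-\tilde x_j>L/2$.

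\textbf{Time-derivative ODE for the parameters.} Composing with $t\mapsto U(t)\in\mathcal C^1(I,L^2\times H^{-1})$ yields $(\tilde\theta_j,\tilde\omega_j,\tilde x_j)\in\mathcal C^1(I)$. Differentiating each identity in~\eqref{eq:ortho-dyn} in $t$, using $\partial_t U=JE'(U)$ and the decomposition
\begin{equation*}
\partial_t\tilde R_j=\partial_t\tilde\theta_j\,i\tilde R_j+\partial_t\tilde\omega_j\,\Lambda_\omega\tilde R_j-\partial_t\tilde x_j\cdot\nabla\tilde R_j,
\end{equation*}
and rewriting $JE'(\tilde R_j)$ via the stationary equation~\eqref{eq:snlkg} for $(\tilde\omega_j,v_j)$ (up to a correction of order $|\tilde\omega_j-\omega_j|$), one obtains a linear system in the unknowns $(\partial_t\tilde\omega_j,\,\partial_t\tilde\theta_j-\tilde\omega_j/\gamma_j,\,\partial_t\tilde x_j-v_j)_j$ whose leading matrix is exactly the Gram matrix from the IFT step, hence invertible.

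\textbf{Bounding the forcing terms.} The right-hand side of this linear system is made of three kinds of contributions. First, pairings linear in $\Upsilon$ with derivatives of $\tilde R_j$ are killed, up to remainders of size $\norm{\Upsilon}_2^2$, by the orthogonality conditions~\eqref{eq:ortho-dyn} and the fact that $\tilde R_j$ almost solves the stationary equation. Second, nonlinear remainders from $E'(U)-\sum_k E'(\tilde R_k)$ paired with a localized test function are bounded by $C\norm{\Upsilon}_2^2$ using the $L^\infty$ bound on $\tilde R_j$. Third, soliton--soliton interactions of the form $\psld{\tilde R_k\,\text{or its derivatives}}{\text{test localized near }\tilde x_j}$ for $k\neq j$ are controlled via exponential decay~\eqref{eq:exponential-decay} and the separation $|\tilde x_j-\tilde x_k|\geq v_\star t/2$ at large $t$ (which follows from $|\tilde x_k-v_kt-x_k|\leq\tilde C\varepsilon$); choosing $\alpha$ small enough that $3\alpha<\tfrac12$ absorbs these into $e^{-3\alpha\sqrt{m-\omega_\star^2}v_\star t}$. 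Inverting the leading matrix yields~\eqref{eq:parameters-derivatives-estimate}; the squared powers on the translation and phase corrections reflect the Cauchy--Schwarz absorption required to close the system, whereas $|\partial_t\tilde\omega_j|$ retains its natural scaling because the forcing for the $Q$-modulation already comes quadratic. The main technical obstacle is precisely this algebraic bookkeeping: verifying that every linear-in-$\Upsilon$ contribution genuinely factorizes through the orthogonalities~\eqref{eq:ortho-dyn}, leaving only quadratic leftovers plus the interaction remainders.
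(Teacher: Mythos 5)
Your overall route coincides with the paper's: a static modulation lemma obtained from the implicit function theorem applied to a nearly block-diagonal Gram matrix (off-diagonal blocks exponentially small in $L$), followed by differentiating the orthogonality relations in time and inverting that same matrix to get an ODE system for the parameters. The gap is in the step where you bound the forcing, which is exactly where the content of \eqref{eq:parameters-derivatives-estimate} lies. Your blanket claim that all ``pairings linear in $\Upsilon$ \dots are killed, up to remainders of size $\norm{\Upsilon}_2^2$, by the orthogonality conditions and the fact that $\tilde R_j$ almost solves the stationary equation'' is false for the phase and translation equations: pairing the equation for $\Upsilon$ against $\nabla\tilde R_k$ or $i\tilde R_k$ leaves a term of the type $\psld{\Upsilon}{\mathfrak L^\star\nabla\tilde R_k}$ which the orthogonality conditions \eqref{eq:ortho-dyn} do not annihilate; it can only be estimated by $C\norm{\Upsilon}_2$ (this is \eqref{eq:evol4}--\eqref{eq:evol5} in the paper). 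That linear bound is precisely why $\left|\partial_t\tilde\theta_j-\tfrac{\tilde\omega_j}{\gamma_j}\right|$ and $|\partial_t\tilde x_j-v_j|$ enter \eqref{eq:parameters-derivatives-estimate} squared — not through any ``Cauchy--Schwarz absorption'', but simply because squaring a linear bound gives $\norm{\Upsilon}_2^2$. Moreover, if your blanket cancellation were true, you would be proving a quadratic bound for all parameters, which is stronger than the statement and not available.

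Conversely, for the one equation where a cancellation does occur — the $\tilde\omega_k$-equation — you assert that ``the forcing for the $Q$-modulation already comes quadratic'' without identifying the mechanism, and the mechanism is not that $\tilde R_j$ almost solves the stationary equation. It is the choice of the test direction $iJ\tilde R_k\propto Q'(\tilde R_k)$ combined with the nondegeneracy of Lemma~\ref{lem:kernel}: the linear term $\psld{\Upsilon}{(JE''(\tilde R_k))^\star iJ\tilde R_k}$ recombines with the leading part of $\psld{\Upsilon}{\partial_t(iJ\tilde R_k)}$ (phase rotating at rate $\tilde\omega_k/\gamma_k$, translation at speed $v_k$) into $\psld{\Upsilon}{(JS''(\tilde R_k))^\star iJ\tilde R_k}=0$, since $i\tilde R_k\in\mathrm{Ker}\,S''(\tilde R_k)$ (equivalently via \eqref{eq:miracle}); what survives is of size $|Mod(t)|\,\norm{\Upsilon}_2$ plus genuinely quadratic and interaction terms, and only after the rough bound \eqref{eq:first-estimate} obtained by inverting the full system does one upgrade to $|\partial_t\tilde\omega_k|\lesssim\norm{\Upsilon}_2^2+e^{-3\alpha\sqrt{m-\omega_\star^2}v_\star t}$ as in \eqref{eq:evol2} and \eqref{eq:better-estimate}; the left-hand coefficient $\Lambda_{\tilde\omega_k}Q(\tilde R_k)\neq0$ on $\mathcal O_{\rm stab}$ is the point you did get right. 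Since this two-step, direction-specific argument yields the only nontrivial feature of \eqref{eq:parameters-derivatives-estimate} (it is what later feeds Lemma~\ref{lem:claim3} and the Taylor expansion), asserting it is a genuine gap. Two smaller looseness points, shared to some extent with the paper: $\mathcal C^1$ dependence in $t$ does not follow directly from composing the IFT map with $t\mapsto U(t)$, since $U$ is only $\mathcal C^1$ with values in $L^2\times H^{-1}$ (the paper invokes a regularization argument from~\cite{MaMe01}); and the $t$-dependent error $e^{-3\alpha\sqrt{m-\omega_\star^2}v_\star t}$ requires the modulated centers to separate like $v_\star t$, which does not follow from $U\in\mathcal U(\varepsilon,L)$ alone but from the bootstrap context in which the proposition is applied.
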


The proof of Proposition~\ref{prop:dyn-mod} relies on the following Lemma. Note that this lemma is valid for time-independent functions.

\begin{lemma}[Static Modulation]\label{lem:modulation-static-2}
There exist $\tilde L,\tilde C,\tilde \varepsilon>0$ such that for any  $L>\tilde L$, $0<\varepsilon<\tilde\varepsilon$, the following property is verified. \\
 For $j=1,\dots,N$, there exist (unique) $\mathcal C^1$ functions
\[ 
\tilde \theta_j:\mathcal U(\varepsilon,L)\to \R, \qquad\tilde\omega_j:\mathcal U(\varepsilon,L)\to (-\sqrt{m},\sqrt{m}),\qquad \tilde x_j:\mathcal U(\varepsilon,L)\to \R^d,
\]
 such that if we define $\tilde\Phi_j$ and $\Upsilon$ by
\begin{equation*}
\tilde\Phi_j=e^{i\tilde\theta_j}\Phi_{\tilde\omega_j,v_j}(\cdot-\tilde x_j),\qquad
\Upsilon=U- \sum_{j=1}^N \tilde\Phi_j
\end{equation*}
 then $\Upsilon$ satisfies the orthogonality conditions 
\begin{equation}\label{eq:ortho-stat}
\psld{\Upsilon}{i\tilde\Phi_j}=\psld{\Upsilon}{iJ\tilde\Phi_j}=\psld{\Upsilon}{\nabla\tilde\Phi_j}=0,\quad j=1,\dots,N
\end{equation}
Moreover, 
\[
\norm{\Upsilon}_{H^1\times L^2}+\sum_{j=1}^N|\tilde \omega_j-\omega_j|\leq \tilde C\varepsilon,\qquad \tilde x_{j+1}-\tilde x_j>\frac{L}{2},\; j=1,\dots,N-1.
\]
\end{lemma}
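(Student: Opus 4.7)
The plan is to obtain the modulation parameters as $\mathcal C^1$ functions of $U$ via the implicit function theorem applied to an adapted map. For $U \in H^1(\R^d)\times L^2(\R^d)$ and parameters $\Theta = (\tilde\theta_j, \tilde\omega_j, \tilde x_j)_{j=1,\ldots,N} \in \R^{N(d+2)}$, define $\tilde\Phi_j(\Theta)$ and $\Upsilon(U,\Theta) = U - \sum_k \tilde\Phi_k(\Theta)$ as in the statement, and introduce
\[
F : (U, \Theta) \longmapsto \bigl(\psld{\Upsilon}{i\tilde\Phi_j},\; \psld{\Upsilon}{iJ\tilde\Phi_j},\; \psld{\Upsilon}{\nabla\tilde\Phi_j}\bigr)_{j=1,\ldots,N} \in \R^{N(d+2)}.
\]
The orthogonality conditions \eqref{eq:ortho-stat} amount to $F(U, \Theta) = 0$. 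At the reference configuration $U_0 = \sum_j e^{i\vartheta_j}\Phi_j(\cdot - \xi_j)$ with $\Theta_0 = (\vartheta_j, \omega_j, \xi_j)_j$ one has $\Upsilon = 0$, hence $F(U_0, \Theta_0) = 0$, and everything reduces to showing that the Jacobian $\partial_\Theta F(U_0, \Theta_0)$ is invertible with bounds uniform in the base point.

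Since $\Upsilon = 0$ at the reference point, only the derivatives of the explicit factors survive, and one computes $\partial_{\tilde\theta_k}\tilde\Phi_k|_{\Theta_0} = i\tilde\Phi_k$, $\partial_{\tilde\omega_k}\tilde\Phi_k|_{\Theta_0} = \Lambda_\omega\tilde\Phi_k$ and $\partial_{\tilde x_k}\tilde\Phi_k|_{\Theta_0} = -\nabla\tilde\Phi_k$. The resulting $N(d+2) \times N(d+2)$ matrix has a natural block structure: the $(j,k)$-block gathers $L^2$-pairings between generators supported near $\xi_k$ and the test vectors $i\tilde\Phi_j$, $iJ\tilde\Phi_j$, $\nabla\tilde\Phi_j$ supported near $\xi_j$. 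For $j \neq k$, the exponential decay \eqref{eq:exponential-decay} combined with $|\xi_j - \xi_k| \geq L$ bounds each such pairing by $C e^{-c\sqrt{m - \omega_\star^2}\, L}$ (the type of interaction estimate announced for the Appendix), so the full Jacobian is an exponentially small perturbation (in $L$) of the block-diagonal matrix formed by the diagonal $(d+2) \times (d+2)$ blocks.

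Each diagonal block is invertible thanks to three ingredients: $\psld{i\tilde\Phi_j}{i\tilde\Phi_j} = \norm{\tilde\Phi_j}_{L^2\times L^2}^2 > 0$; the Gram matrix $\psld{\nabla\tilde\Phi_j}{\nabla\tilde\Phi_j}$ is positive definite (reducing by a change of variables to the $v_j = 0$ case, where radiality of $\varphi_{\omega_j}$ makes it diagonal with positive entries); and the scalar $\psld{\Lambda_\omega\tilde\Phi_j}{iJ\tilde\Phi_j} = \Lambda_\omega Q(\tilde\Phi_j)$ is nonzero precisely by the computation already performed in \eqref{eq:negativity-1}, which rests on $(\omega_j, \theta_j, v_j, x_j) \in \mathcal O_{\rm stab}$. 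The remaining mixed entries of each diagonal block either vanish by symmetry considerations on $\varphi_{\omega_j}$ (for instance $\psld{i\tilde\Phi_j}{iJ\tilde\Phi_j} = 0$, $\psld{\nabla\tilde\Phi_j}{i\tilde\Phi_j} = 0$ and $\psld{\Lambda_\omega\tilde\Phi_j}{\nabla\tilde\Phi_j} = 0$ in the rest frame $v_j = 0$) or remain controlled, leaving a block-triangular structure with nonzero diagonal entries. The main obstacle to watch is precisely the non-degeneracy of $\Lambda_\omega Q$: it is the single place where the stable regime $\mathcal O_{\rm stab}$ enters the modulation argument, and without this condition the $\tilde\omega_j$-direction of modulation would degenerate.

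For $L > \tilde L$ large enough, $\partial_\Theta F(U_0, \Theta_0)$ is therefore invertible with inverse uniformly bounded in $(\vartheta_j, \xi_j)_j$; the uniformity is legitimate because the entries of the Jacobian depend only on the differences $\xi_j - \xi_k$ and are unchanged by the equivariance $\vartheta_j \mapsto \vartheta_j + \phi$. The implicit function theorem then produces $\tilde\varepsilon > 0$ and a $\mathcal C^1$ map $U \mapsto \Theta(U) = (\tilde\theta_j, \tilde\omega_j, \tilde x_j)_j$ defined on $\mathcal U(\varepsilon, L)$ for every $\varepsilon < \tilde\varepsilon$, solving $F(U, \Theta(U)) = 0$ and obeying the Lipschitz bound $|\tilde\theta_j - \vartheta_j| + |\tilde\omega_j - \omega_j| + |\tilde x_j - \xi_j| \leq \tilde C \varepsilon$. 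The announced estimate on $\norm{\Upsilon}_{H^1 \times L^2}$ follows by the triangle inequality from this bound and the definition of $\mathcal U(\varepsilon, L)$, while the separation $\tilde x_{j+1} - \tilde x_j > L/2$ is secured by choosing $\tilde\varepsilon$ small enough that $\tilde C \tilde\varepsilon < L/4$.
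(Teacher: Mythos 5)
Your proposal is correct and follows essentially the same route as the paper: the implicit function theorem applied to the map built from the orthogonality pairings, with the Jacobian at the reference configuration analyzed as invertible diagonal blocks (using $\norm{\tilde\Phi_j}_{L^2\times L^2}^2>0$, the nondegenerate gradient Gram matrix, and $\Lambda_\omega Q\neq 0$ in the regime of $\mathcal O_{\rm stab}$, as in~\eqref{eq:negativity-1}) plus cross terms that are exponentially small in $L$ by the decay estimate~\eqref{eq:exponential-decay}. The only difference is presentational: the paper runs the argument in balls $\mathcal B(\varepsilon)$ around fixed parameters and then patches over the tube $\mathcal U(\varepsilon,L)$, whereas you invoke uniform invertibility of the Jacobian in the base point directly, which amounts to the same thing.
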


In the proofs, we will use the notation $\Lambda_{\omega_j}$ for the scaling operator, i.e. 
\[
\displaystyle\Lambda_{\omega_j}\tilde\Phi_j:=\frac{\partial}{\partial \omega}e^{i\tilde\theta_j}\Phi_{\omega,v_j}(\cdot-\tilde x_j)\Big|_{\omega=\tilde\omega_j}.
\]

\begin{proof}
We start by proving the lemma in a ball. Take $\varepsilon>0$, $L>0$, $(\vartheta_j)_{j=1,\dots,N}\subset\R$ and $(\xi_j)_{j=1,\dots,N}\subset\R^d$ such that $\xi_{j+1}>\xi_j+L$ for $j=1,\dots,N-1$. Let $\mathcal{B}(\varepsilon)$ denote the ball of $H^1(\R^d)\times L^2(\R^d)$ defined by 
\[
\mathcal{B}(\varepsilon)=
\left\{
U\in H^1(\R^d)\times L^2(\R^d); \norm*{U-\sum_{j=1}^N  e^{i\vartheta_j}\Phi_j(x-\xi_j)}_{H^1\times L^2}<\varepsilon
\right\}
\]
Define
\(
\mathfrak p_0:=(\vartheta_1,\dots,\vartheta_N,\omega_1,\dots,\omega_N,\xi_1,\dots,\xi_N),
\)
and let $\mathfrak P\subset\R^N\times\R^N\times(\R^d)^N$, be a neighborhood of $\mathfrak p_0$. We denote by
\[
\mathfrak p=(\theta_1,\dots,\theta_N,\varpi_1,\dots,\varpi_N,y_1,\dots,y_N)
\]
a generic element of $\mathfrak P$.
We define the functional $F:\mathfrak P\times \mathcal B(\varepsilon)\to(\R^{(d+2)})^N$ by 
\begin{equation*}
F(\mathfrak p,U):=
\begin{pmatrix}
F_{k,1}(\mathfrak p,U)\\
F_{k,2}(\mathfrak p,U)\\
F_{k,3}(\mathfrak p,U)
\end{pmatrix}_{k=1,\dots,N},
\end{equation*}
where for $k=1,\dots,N$ we have set
\begin{align*}
F_{k,1}(\mathfrak p,U)&=\psld{U-\sum_{j=1}^Ne^{i\theta_j}\tau_{y_j}\Phi_{\varpi_j,v_j}}
	{ie^{i\theta_k}\tau_{y_k}\Phi_{\varpi_k,v_k}},\\
F_{k,2}(\mathfrak p,U)&=\psld{U-\sum_{j=1}^Ne^{i\theta_j}\tau_{y_j}\Phi_{\varpi_j,v_j}}		 
	{ie^{i\theta_k}\tau_{y_k}J\Phi_{\varpi_k,v_k}},\\
F_{k,3}(\mathfrak p,U)&=\psld{U-\sum_{j=1}^Ne^{i\theta_j}\tau_{y_j}\Phi_{\varpi_j,v_j}}
	{ e^{i\theta_k}\tau_{y_k}\nabla\Phi_{\varpi_k,v_k}}.
\end{align*}
Here, $\tau_y$ is the translation by $y$, i.e. $\tau_y v(x)=v(x-y)$.
We clearly have 
\[
F\left(\mathfrak p_0,\sum_{j=1}^N  e^{i\vartheta_j}\Phi_j(x-\xi_j)\right)=0.
\]
 The lemma inside the ball will follow from the Implicit Function Theorem if we prove that 
\begin{equation}\label{eq:invertibility}
\frac{\partial F}{\partial\mathfrak p}\bigg(\mathfrak p=\mathfrak p_0, U=\sum_{j=1}^N  e^{i\vartheta_j}\Phi_j(x-\xi_j)\bigg)
\text{ is invertible}.
\end{equation}
The computation of the derivative is not very hard. Many terms will be made small using the exponential decay of the profiles. Other will cancel due to orthogonality. We will essentially be left with a diagonal matrix with nonzero entries, hence the invertibility. We give only some representative calculations. Let's start by
\[
\frac{\partial F_{k,1}}{\partial \theta_j}\bigg(\mathfrak p_0, \sum_{j=1}^N  e^{i\vartheta_j}\Phi_j(x-\xi_j)\bigg)=-\psld{ie^{i\vartheta_j}\tau_{\xi_j}\Phi_{\omega_j,v_j}}{ie^{i\vartheta_k}\tau_{\xi_k}\Phi_{\omega_k,v_k}}
\]
When $j=k$, we readily have 
\[
\frac{\partial F_{k,1}}{\partial \theta_k}\bigg(\mathfrak p_0, \sum_{j=1}^N  e^{i\vartheta_j}\Phi_j(x-\xi_j)\bigg)=-\norm{\Phi_{\omega_k,v_k}}_2^2.
\]
Assume now $j\neq k$. Then, by exponential decay (see~\eqref{eq:exponential-decay}), we have 
\begin{multline*}
\left|\tau_{\xi_j}\Phi_{\omega_j,v_j}\tau_{\xi_k}\Phi_{\omega_k,v_k}\right|
	\leq C e^{-\frac{\sqrt{m-\omega_j^2}}{2}|x-\xi_j|}e^{-\frac{\sqrt{m-\omega_k^2}}{2}|x-\xi_k|}\\
	\leq C e^{-\frac{\sqrt{m-\omega_j^2}}{4}|x-\xi_j|}e^{-\frac{\sqrt{m-\omega_k^2}}{4}|x-\xi_k|}
e^{-\frac{\sqrt{\min\{{m-\omega_j}^2,m-{\omega_k}^2\}} }{4}|\xi_j-\xi_k|}.
\end{multline*}
Therefore, since $|\xi_j-\xi_k|>L$, this implies
\begin{equation}\label{eq:exp-small}
\left|\frac{\partial F_{k,1}}{\partial \theta_j}\bigg(\mathfrak p_0, \sum_{j=1}^N  e^{i\vartheta_j}\Phi_j(x-\xi_j)\bigg)\right|\leq C e^{-\frac{\sqrt{\min\{{m-\omega_j^2},{m-\omega_k^2}\}} }{4}L}.
\end{equation}
This quantity can be made as small as we need by increasing the value of~$L$.
For the derivative with respect to $\varpi_j$, we have 
\[
\frac{\partial F_{k,1}}{\partial \varpi_j}\bigg(\mathfrak p_0, \sum_{j=1}^N  e^{i\vartheta_j}\Phi_j(x-\xi_j)\bigg)=-\psld{e^{i\theta_j}\tau_{\xi_j}\Lambda_{\omega_j}\Phi_{\omega_j,v_j}}{ie^{i\theta_k}\tau_{\xi_k}\Phi_{\omega_k,v_k}}
\]
When $j\neq k$, this quantity can be made small as in~\eqref{eq:exp-small}. For $j=k$, since $\varphi_\omega\in\R$, we simply have 
\[
\frac{\partial F_{k,1}}{\partial \varpi_k}\bigg(\mathfrak p_0, \sum_{j=1}^N  e^{i\vartheta_j}\Phi_j(x-\xi_j)\bigg)=-\frac1\gamma\psld{\Lambda_{\omega_j}\Phi_{\omega_k}}{i\Phi_{\omega_k}}=0.
\]
All other computations follow from similar arguments and we finally find that \[
\frac{\partial F}{\partial\mathfrak p}\bigg(\mathfrak p_0, \sum_{j=1}^N  e^{i\vartheta_j}\Phi_j(x-\xi_j)\bigg)=DIAG+O(e^{-\frac{\sqrt{m-\omega_\star^2}}{4}L})
\]
where $DIAG$ is a diagonal matrix with nonzero entries on the diagonal. Therefore, for $L$ large enough, we have the desired invertibility property~\eqref{eq:invertibility} and the Implicit Function Theorem implies the result inside the ball. 
Since any $U\in\mathcal{U}(\varepsilon,L)$ belongs to some ball $\mathcal{B}(\varepsilon)$, the existence part follows in the cylinder $\mathcal{U}(\varepsilon,L)$. To show uniqueness, one has to prove that the functions obtained are independent of the ball chosen, we leave the details of this argument to the reader.
\end{proof}

\begin{proof}[Proof of Proposition~\ref{prop:dyn-mod}]
The first part of the statement follows from Lemma~\ref{lem:modulation-static-2} (except the regularity that follows from other regularization arguments, see~\cite{MaMe01}), hence the main thing to check is~\eqref{eq:parameters-derivatives-estimate}. We first write the equation verified by $\Upsilon$. Recall that $U$ satisfies
\(
\partial_t U=JE'(U).
\)
We replace $U$ by $\sum_{j=1}^N \tilde R_j(t)+\Upsilon(t)$ in the previous equation to get
\[
\partial_t\Upsilon+\sum_{j=1}^N \left(i \partial_t\tilde\theta_j\tilde R_{j}+\partial_t\tilde\omega_j\Lambda_{\tilde\omega_j}  \tilde R_{j}
-\partial_t\tilde x_j\cdot\nabla\tilde R_{j}\right)=JE'\bigg(\sum_{j=1}^N \tilde R_j(t)+\Upsilon(t)\bigg)
\]
such that it follows
\begin{multline}\label{eq:linearized-eq}
\partial_t\Upsilon+\sum_{j=1}^N\left(i\left(\partial_t\tilde\theta_j-\frac{\tilde\omega_j}{\gamma_j}\right)\tilde R_{j}+\partial_t\tilde\omega_j\Lambda_{\tilde\omega_j}  \tilde R_{j}
-(\partial_t\tilde x_j-v_j)\cdot\nabla\tilde R_{j}\right)
\\=\mathfrak{L}\Upsilon+\mathfrak N(\Upsilon) +O(e^{-3\alpha\sqrt{m-\omega_\star^2}v_\star t})
\end{multline}
where  $\mathfrak L$ is the linearized operator defined by
\begin{gather*}
\mathfrak L:=
J\left(
\begin{pmatrix}
-\Delta+m&0\\
0&1
\end{pmatrix}
-
\sum_{j=1}^N
\begin{pmatrix}
(p-1)|\tilde R_j|^{p-3}\tilde R_j\Re(\tilde R_j\bar\cdot)+|\tilde R_j|^{p-1}&0\\
0&0
\end{pmatrix}
\right)
\end{gather*}
 and
 $\mathfrak N(\Upsilon)$ is the remaining nonlinear part. To write this equation, we have used Lemma~\ref{lem:interactions-estimates}, the fact that 
\begin{gather*}
E'\left(\sum_{j=1}^N\tilde  R_j\right)=\sum_{j=1}^NE'(\tilde R_j)+O(e^{-3\alpha\sqrt{m-\omega_\star^2}v_\star t}),\\
 \mathfrak L=
J E''\left(\sum_{j=1}^N\tilde R_j\right)+O(e^{-3\alpha\sqrt{m-\omega_\star^2}v_\star t}).
\end{gather*}
and that $\tilde R_j$ is a critical point of $E+\frac{\tilde\omega_j}{\gamma_j}Q+v_j\cdot P$. An analogous computation is derived in all details in Lemma~\ref{lem:taylor}.

Take now the scalar product of~\eqref{eq:linearized-eq} with $iJ\tilde R_{k}$. By using Lemma~\ref{lem:interactions-estimates}, the definition of $\tilde R_k$, and the orthogonality conditions
\eqref{eq:ortho-dyn} it follows that
\begin{align*}
&\psld{\nabla\tilde R_k}{iJ\tilde R_k}=\psld{i \tilde R_k}{iJ\tilde R_k}=0\\
&\psld{\Lambda_{\tilde \omega_j} R_j}{iJ\tilde R_k}=\psld{\nabla\tilde R_j}{iJ\tilde R_k}=\psld{i \tilde R_j}{iJ\tilde R_k}= O(e^{-3\alpha\sqrt{m-\omega_\star^2}v_\star t}) \ \text{ if }j \neq k \\
&\psld{\partial_t \Upsilon}{iJ\tilde R_k}=- \psld{\Upsilon}{\partial_t iJ\tilde R_k}.
\end{align*}
Therefore
\begin{multline}\label{eq:evol1}
\partial_t\tilde\omega_k\left(\Lambda_{\tilde\omega_k} Q(\tilde R_{k})\right)=\psld{\mathfrak L\Upsilon}{iJ\tilde R_k}+\psld{\Upsilon}{\partial_tiJ\tilde R_k}
+O(\norm{\Upsilon}^2_{H^1\times L^2})+O(e^{-3\alpha\sqrt{m-\omega_\star^2}v_\star t}),
\end{multline}
where the term $\Lambda_{\tilde\omega_k} Q(\tilde R_{k})$ comes from 
\[
\psld{\Lambda_{\tilde\omega_k}\tilde R_k}{iJ\tilde R_{k}}=
	\psld{\tilde R_k}{iJ\Lambda_{\tilde\omega_k}\tilde R_k}=
	\Lambda_{\tilde\omega_k} Q(\tilde R_{k}).
\]
Note that by exponential localization
\[
\psld{\mathfrak L\Upsilon}{iJ\tilde R_k}=
\psld{\Upsilon}{\mathfrak L^\star iJ\tilde R_k}=
\psld{\Upsilon}{(JE''(\tilde R_k))^\star iJ\tilde R_k}+O(e^{-3\alpha\sqrt{m-\omega_\star^2}v_\star t})
\]
We want to use the fact that $i\tilde R_k$ belongs to the kernel of $S''(\tilde R_k)$ (see Lemma~\ref{lem:kernel}),
namely that
\[
\left(J\left(E''+\frac{\tilde\omega_k}{\gamma_k}Q''+v_k\cdot P''\right)(\tilde R_k)\right)^\star iJ\tilde R_k=0.
\]
To this aim, we use the definition~\eqref{eq:def-tilde-Phi} of $\tilde R_k$, to compute the following time derivative and make the missing parts appear.
\begin{multline*}
\psld{\Upsilon}{\partial_tiJ\tilde R_k}=\frac{\tilde\omega_k}{\gamma_k}\psld{\Upsilon}{(JQ''(\tilde R_k))^\star iJ\tilde R_k}+v_k\cdot \psld{\Upsilon}{(JP''(\tilde R_k))^\star iJ\tilde R_k}\\
+\left(\frac{\tilde\omega_k}{\gamma_k}-\partial_t\tilde\theta_k\right)\psld{\Upsilon}{J\tilde R_k}
+(v_k-\partial_t\tilde x_k)\cdot \psld{\Upsilon}{ iJ\nabla\tilde R_k}\\
+\partial_t\tilde\omega_k\psld{\Upsilon}{iJ\Lambda_{\tilde\omega_k}\tilde R_k}.
\end{multline*}
Therefore,~\eqref{eq:evol1} gives
\begin{multline}\label{eq:evol2}
\partial_t\tilde\omega_k\left(\Lambda_{\tilde\omega_k} Q(\tilde R_{k})\right)=
	\left(\frac{\tilde\omega_k}{\gamma_k}-\partial_t\tilde\theta_k\right)\psld{\Upsilon}{J\tilde R_k}
+(v_k-\partial_t\tilde x_k)\psld{\Upsilon}{iJ\nabla\tilde R_k}
\\
+\partial_t\tilde\omega_k\psld{\Upsilon}{iJ\Lambda_{\tilde\omega_k}\tilde R_k}
	+O(\norm{\Upsilon}^2_{H^1\times L^2})+O(e^{-3\alpha\sqrt{m-\omega_\star^2}v_\star t}).
\end{multline}
Take the scalar product of~\eqref{eq:linearized-eq} with $\frac{\partial}{\partial x_j}\tilde R_{k}$ for $j=1,\dots, d$ to get from similar arguments
\begin{multline}\label{eq:evol3}
(v_k-\partial_t\tilde x_k)\frac{1}{d}\norm*{\nabla\tilde R_{k}}_2^2=
\psld{\mathfrak L\Upsilon}{\nabla\tilde R_k}+
\psld{\Upsilon}{\partial_t\nabla\tilde R_k}
+O(\norm{\Upsilon}^2_{H^1\times L^2})+O(e^{-3\alpha\sqrt{m-\omega_\star^2}v_\star t}).
\end{multline}
Conversely to what happened for~\eqref{eq:evol2}, we do not expect to have a cancellation on the linear term. 
We just estimate it by
\[
\psld{\mathfrak L\Upsilon}{\nabla\tilde R_k}=
\psld{\Upsilon}{\mathfrak L^\star\nabla\tilde R_k}\leq C\norm{\Upsilon}_2
\]
Therefore~\eqref{eq:evol3} gives 
\begin{equation}\label{eq:evol4}
(v_k-\partial_t\tilde x_k)\frac{1}{d}\norm*{\nabla\tilde R_{k}}_2^2=
O(\norm{\Upsilon}_{H^1\times L^2})+O(e^{-3\alpha\sqrt{m-\omega_\star^2}v_\star t}).
\end{equation}
Finally, take the scalar product of~\eqref{eq:linearized-eq} with $i\tilde R_k$  and argue as previously to obtain 
\begin{equation}\label{eq:evol5}
(\partial_t\tilde\theta_k-\tilde\omega_k)\norm{\tilde R_k}_2^2
=
O(\norm{\Upsilon}_{H^1\times L^2})+O(e^{-3\alpha\sqrt{m-\omega_\star^2}v_\star t}).
\end{equation}
Putting together~\eqref{eq:evol2},~\eqref{eq:evol4} and~\eqref{eq:evol5}  we obtain a differential system for the modulation equations vector $Mod(t):=(\partial_t\tilde\omega_j,\partial_t\tilde\theta_j-\tilde\omega_j,\partial_t\tilde x_j-v_j)_{j=1,\dots,N}$ of the form
\[
A\cdot Mod(t)=M(\Upsilon)++O(e^{-3\alpha\sqrt{m-\omega_\star^2}v_\star t}),
\] 
where $|M(\Upsilon)|\leq C \norm{\Upsilon}_{H^1\times L^2}$.
As long as the modulation parameter do not vary too much and $\norm{\Upsilon}_{H^1}$ remains small, $A$ is invertible (it is of the form $DIAG+small$ with $DIAG$ a diagonal nondegenerate matrix) and we can deduce that 
\begin{equation}\label{eq:first-estimate}
|Mod(t)|\leq C\norm{\Upsilon}_{H^1\times L^2}+O(e^{-3\alpha\sqrt{m-\omega_\star^2}v_\star t}).
\end{equation}
Coming back now to~\eqref{eq:evol2}, it is now easy to see that in fact we can improve in part the previous estimate into
\begin{equation}\label{eq:better-estimate}
\sum_{j=1}^N|\partial_t\tilde\omega_j|\leq C\norm{\Upsilon}_{H^1\times L^2}^2+O(e^{-3\alpha\sqrt{m-\omega_\star^2}v_\star t}).
\end{equation}
This improvement is due to our choice of orthogonality conditions. Combining~\eqref{eq:first-estimate} and~\eqref{eq:better-estimate} gives the desired result.
\end{proof}

\section{Uniform Estimates}\label{sec:uniform}

This Section is devoted to the proof of Proposition~\ref{prop:uniform}. We essentially follow the same line as in the Schr\"odinger case \cite{MaMe06}. Since our approximate multi-solitons have final data $U_n(T_n)=R(T_n)$, they satisfy the desired estimate at least on some interval $[T_n-\delta,T_n]$. Thus the idea is to reduce things to a bootstrap argument: Proposition~\ref{prop:uniform} is a consequence of the following proposition.

\begin{proposition}[Bootstrap]\label{prop:bootstrap}
There exist $\alpha=\alpha(d,N)>0$, and $T_0\in\R$ (independent of $n$) such that for $n$ large enough the following bootstrap property holds. For $t^\dagger\in [T_0,T^n]$, if  $U_n$  satisfies for all $t\in[t^\dagger,T^n]$ the estimate
\begin{equation}\label{eq:bootstrap1}
\norm{U_n(t)-R(t)}_{H^1\times L^2}\leq e^{-\alpha \sqrt{m-\omega_\star^2} v_\star t},
\end{equation}
then it will also satisfies for all $t\in[t^\dagger,T^n]$ the better estimate
\begin{equation}\label{eq:bootstrap2}
\norm{U_n(t)-R(t)}_{H^1\times L^2}\leq \frac{1}{2}e^{-\alpha \sqrt{m-\omega_\star^2} v_\star t}.
\end{equation}
\end{proposition}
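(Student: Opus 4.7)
The plan is to run a Lyapunov argument on the remainder $\Upsilon(t)=U_n(t)-\sum_{j=1}^N\tilde R_j(t)$ coming from the decomposition of Proposition~\ref{prop:dyn-mod}, using an almost-conserved coercive functional built from localized pieces of $E$, $Q$, $P$. First I would verify that, choosing $T_0$ large enough (depending on $\tilde\varepsilon,\tilde L,N,\alpha$), the bootstrap hypothesis \eqref{eq:bootstrap1} places $U_n(t)$ inside $\mathcal U(\varepsilon,L)$ for every $t\in[t^\dagger,T^n]$, so that Proposition~\ref{prop:dyn-mod} applies and provides the modulated parameters $(\tilde\theta_j,\tilde\omega_j,\tilde x_j)$, the orthogonality \eqref{eq:ortho-dyn}, and the derivative bound \eqref{eq:parameters-derivatives-estimate}. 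The final-data condition $U_n(T^n)=R(T^n)$ forces $\Upsilon(T^n)=0$ together with the nominal values $\tilde\omega_j(T^n)=\omega_j$, $\tilde x_j(T^n)=v_jT^n+x_j$, $\tilde\theta_j(T^n)=\frac{\omega_j}{\gamma_j}T^n+\theta_j$.

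Next I would introduce cutoffs $\chi_1,\dots,\chi_N$ of $\R^d$ separating the soliton centers $v_jt+x_j$; since the $v_j$ are pairwise distinct, the zones on which $|\nabla\chi_k|\neq 0$ stay at distance $\gtrsim v_\star t$ from every soliton, so that by \eqref{eq:exponential-decay} each $\tilde R_k$ is exponentially small there. With the corresponding localized charges, momenta and energies $Q_{\chi_k},P_{\chi_k},E_{\chi_k}$ in hand, assemble the global action and its linearized version around the sum of solitons,
\[
\mathcal S(t,U):=\sum_{k=1}^N\Big(E_{\chi_k}(U)+\tfrac{\tilde\omega_k(t)}{\gamma_k}Q_{\chi_k}(U)+v_k\cdot P_{\chi_k}(U)\Big),\qquad
\mathcal H(t):=\mathcal S(t,U_n(t))-\mathcal S\big(t,\textstyle\sum_k\tilde R_k(t)\big).
\]
The multi-soliton coercivity announced by the authors (Lemma~\ref{lem:multi-coercivity}) is expected to transport the single-soliton coercivity of Lemma~\ref{lem:coercivity} to $\mathcal H$, giving, thanks to \eqref{eq:ortho-dyn},
\[
\mathcal H(t)\geq \delta\,\norm{\Upsilon(t)}_{H^1\times L^2}^{2}-Ce^{-3\alpha\sqrt{m-\omega_\star^2}v_\star t}.
\]

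For the time derivative, the Hamiltonian structure makes the interior contributions cancel; what remains splits into commutators of $\chi_k$ with space derivatives (supported where every $\tilde R_j$ is exponentially small, hence bounded by $Ce^{-3\alpha\sqrt{m-\omega_\star^2}v_\star t}$ via the interaction estimates of the appendix) and the time-derivative of the multipliers $\tilde\omega_k(t)/\gamma_k$ hitting $Q_{\chi_k}$; the latter is controlled by the \emph{quadratic} bound on $\partial_t\tilde\omega_k$ in \eqref{eq:parameters-derivatives-estimate} combined with \eqref{eq:bootstrap1}. Altogether this produces $|\partial_t\mathcal H(t)|\leq Ce^{-3\alpha\sqrt{m-\omega_\star^2}v_\star t}$. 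Integrating backward from $T^n$, where $\mathcal H(T^n)=0$, and applying the coercive lower bound yields $\norm{\Upsilon(t)}_{H^1\times L^2}^2\leq Ce^{-3\alpha\sqrt{m-\omega_\star^2}v_\star t}$ on $[t^\dagger,T^n]$. Feeding this back into \eqref{eq:parameters-derivatives-estimate} produces $|\partial_t\tilde\omega_k|\leq Ce^{-3\alpha\sqrt{m-\omega_\star^2}v_\star t}$ together with $|\partial_t\tilde x_k-v_k|+|\partial_t\tilde\theta_k-\tilde\omega_k/\gamma_k|\leq Ce^{-\frac{3\alpha}{2}\sqrt{m-\omega_\star^2}v_\star t}$, which integrated backward from the prescribed final values yield
\[
|\tilde\omega_k(t)-\omega_k|+|\tilde x_k(t)-v_k t-x_k|+\bigl|\tilde\theta_k(t)-\tfrac{\omega_k}{\gamma_k}t-\theta_k\bigr|\leq Ce^{-\frac{3\alpha}{2}\sqrt{m-\omega_\star^2}v_\star t}.
\]

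The triangle inequality $\norm{U_n-R}_{H^1\times L^2}\leq\norm{\Upsilon}_{H^1\times L^2}+\norm{\sum_k\tilde R_k-R}_{H^1\times L^2}$ then gives an estimate of order $e^{-\frac{3\alpha}{2}\sqrt{m-\omega_\star^2}v_\star t}$, which beats $\frac12 e^{-\alpha\sqrt{m-\omega_\star^2}v_\star t}$ as soon as $T_0$ is chosen so that $Ce^{-\frac{\alpha}{2}\sqrt{m-\omega_\star^2}v_\star T_0}\leq\frac12$; this closes the bootstrap \eqref{eq:bootstrap2}. The main obstacle I expect is the almost-conservation estimate on $\partial_t\mathcal H$: one needs a delicate accounting of how the Hamiltonian flow interacts with the spatial cutoffs and with the time-dependent multipliers $\tilde\omega_k(t)/\gamma_k$, and the fact that $\partial_t\tilde\omega_k$ is \emph{quadratically} rather than linearly small in $\norm{\Upsilon}$ (built into Proposition~\ref{prop:dyn-mod} by the very choice of orthogonality conditions) is precisely what makes the otherwise borderline contribution $\partial_t\tilde\omega_k\cdot Q_{\chi_k}(U_n)$ acceptable.
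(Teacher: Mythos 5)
Your overall architecture (modulation, localized action, coercivity, almost conservation, backward integration from $T^n$) is the same as the paper's, but the key quantitative step --- the almost-conservation estimate --- has a genuine gap, and it is exactly where the bootstrap lives or dies. First, the multiplier term: because you take the \emph{modulated} multipliers $\tilde\omega_k(t)/\gamma_k$ in $\mathcal S$, its time derivative contains $\gamma_k^{-1}\partial_t\tilde\omega_k\,Q_{\chi_k}(U_n)$ with $Q_{\chi_k}(U_n)=O(1)$. The quadratic bound \eqref{eq:parameters-derivatives-estimate} together with \eqref{eq:bootstrap1} gives only $|\partial_t\tilde\omega_k|\leq C(\norm{\Upsilon}_2^2+e^{-3\alpha\sqrt{m-\omega_\star^2}v_\star t})\leq Ce^{-2\alpha\sqrt{m-\omega_\star^2}v_\star t}$, so this term is $O(e^{-2\alpha\sqrt{m-\omega_\star^2}v_\star t})$ with a non-small constant, not $Ce^{-3\alpha\sqrt{m-\omega_\star^2}v_\star t}$ as you claim. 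Second, the cutoff commutator terms: in the transition regions the \emph{solitons} are exponentially small, but the remainder $\Upsilon$ is not --- its contribution there is only controlled by $\norm{\Upsilon}_{H^1\times L^2}^2\leq Ce^{-2\alpha\sqrt{m-\omega_\star^2}v_\star t}$, again with a fixed constant. Third, you differentiate $\mathcal H(t)=\mathcal S(t,U_n)-\mathcal S(t,\sum_k\tilde R_k)$ but never estimate $\partial_t\mathcal S(t,\sum_k\tilde R_k)$; since $\sum_k\tilde R_k$ is not a solution, ``Hamiltonian cancellation'' does not apply to it, and its derivative must be handled via the modulation equations and criticality of $\tilde R_k$. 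The net effect is that your argument only yields $|\partial_t\mathcal H|\leq Ce^{-2\alpha\sqrt{m-\omega_\star^2}v_\star t}$, hence after integration $\norm{\Upsilon(t)}_{H^1\times L^2}\leq C' e^{-\alpha\sqrt{m-\omega_\star^2}v_\star t}$ with a constant $C'$ that does \emph{not} shrink as $T_0$ grows; you therefore cannot recover the factor $\tfrac12$ in \eqref{eq:bootstrap2}, which is the whole point of the bootstrap.

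The paper avoids both borderline terms by design. It freezes the multipliers at the original $\omega_j$, so no $\partial_t\tilde\omega_k\,Q_k$ term appears in $\partial_t\mathcal S$ at all; the price is a nonvanishing first-order term in the Taylor expansion, which is paid by proving $|\tilde\omega_j(t)-\omega_j|=O(e^{-2\alpha\sqrt{m-\omega_\star^2}v_\star t})$ (Lemma~\ref{lem:claim3}, via almost conservation of the localized charge and monotonicity of $\omega\mapsto Q(\Phi_\omega)$ on $\mathcal O_{\rm stab}$), so that the cross term is $O(e^{-3\alpha\sqrt{m-\omega_\star^2}v_\star t})$. And it takes cutoffs whose transition zones widen like $\sqrt t$, so that $\norm{\nabla\phi_j}_\infty\lesssim t^{-1/2}$; this extra $t^{-1/2}$ is precisely what upgrades the remainder contribution in the transition regions from $O(e^{-2\alpha\sqrt{m-\omega_\star^2}v_\star t})$ to $o(e^{-2\alpha\sqrt{m-\omega_\star^2}v_\star t})$ (Lemma~\ref{lem:almost-conservation}), which is then enough to close the bootstrap by enlarging $T_0$. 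Your approach could be repaired in the same spirit --- e.g.\ either freeze the multipliers as in the paper, or keep $\tilde\omega_k(t)$ but pair $\partial_t\tilde\omega_k$ with the \emph{difference} $Q_{\chi_k}(U_n)-Q_{\chi_k}(\sum_j\tilde R_j)=O(\norm{\Upsilon}_{H^1\times L^2})$ after honestly computing $\partial_t\mathcal S(t,\sum_k\tilde R_k)$ --- and in all cases you need the time-widening cutoffs (or some equivalent smallness mechanism) for the commutator terms involving $\Upsilon$.
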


Let us quickly indicate how to obtain Proposition~\ref{prop:uniform} from Proposition~\ref{prop:bootstrap}.

\begin{proof}[Proof of Proposition~\ref{prop:uniform}]
Proposition~\ref{prop:bootstrap} implies Proposition~\ref{prop:uniform} by means of  a classical continuity argument (see e.g~\cite{MaMe06}). First, let us notice that the map $t \mapsto U_n(t) \in H^1(\R^d)\times L^2(\R^d)$ is continuous. Second, let us define
$$
t^{\star}:=\inf\{\tau \in [T_0,T^n]\text{ such that }~\eqref{eq:bootstrap1} \text{ holds for all } t \in [\tau,T^n]  \}.
$$
Recall that $U_n(T^n)=R(T^n)$, therefore we have $T_0\leq t^{\star}<T^n$. Our purpose is to show that
$t^{\star}=T_0$. Let us suppose that $t^{\star}>T_0$. Thanks to~\eqref{eq:bootstrap2} we get
$$
\norm{U_n(t)-R(t)}_{H^1\times L^2}\leq \frac{1}{2}e^{-\alpha \sqrt{m-\omega_\star^2} v_\star t},
$$
for all $t\in[t^{\star},T^n]$. By continuity it  exists $\delta_1>0$ such that
$$
\norm{U_n(t)-R(t)}_{H^1\times L^2}\leq e^{-\alpha \sqrt{m-\omega_\star^2} v_\star t},
$$
for all $t\in[t^{\star}-\delta_1,T^n]$. This contradicts the definition of $t^{\star}$ and finishes the proof.
\end{proof}

Hence  now we only have to prove Proposition~\ref{prop:bootstrap}. 
For the rest of the paper, we  make the following assumption.
\begin{BA} Let $T_0>0$ to be determined later and assume that there exists $t^\dagger\in [T_0,T^n]$ such that  $U_n$  satisfies for all $t\in[t^\dagger,T^n]$ the estimate
\begin{equation}\label{eq:bootstrap-assumption}
\norm{U_n(t)-R(t)}_{H^1\times L^2}\leq e^{-\alpha \sqrt{m-\omega_\star^2} v_\star t}.
\end{equation}
\end{BA}

We want to prove that in fact~\eqref{eq:bootstrap-assumption} holds with the better constant $\frac 12$ on the left hand side.

To prove Proposition~\ref{prop:bootstrap}, we need a way to control the difference between the sum of solitons and the approximate multi-soliton $U_n$. If there is only one soliton, it is known since the ground work of Weinstein~\cite{We85} that the coercivity property of the hessian of the action functional (Lemma~\ref{lem:coercivity}) provides a mean to control the difference between a soliton and a solution close to the orbit of the soliton. As in~\cite{CoLe11,CoMaMe11,MaMe06,MaMeTs06}, we are going to generalize such a property to the case of $N$ solitons. To that purpose, we define \emph{localized versions} of the conservation laws around each solitons and prove that a coercivity property also holds for the functional action related to the multi-solitons.

\subsection{Bootstrap}

In this section, we prove Proposition~\ref{prop:bootstrap}, assuming three intermediate Lemmas proved in the later sections.

First of all, we begin by selecting a particular direction of propagation. Define the application $\Omega:\mathbb S^{d-1}\to \R$ by
\[
\Omega(e):=\prod_{j\neq k}|(v_j-v_k,e)_{\R^d}|,\quad e\in\mathbb S^1.
\]
  Let $e_1$ be  such that 
\begin{equation}\label{eq:Omega}
\Omega(e_1)=\max\big\{ \Omega(e),\, e\in\mathbb S^{d-1}
 \big\}>0
\end{equation}
Here, the $\sup$ is a $\max$ since we are maximizing a continuous function on a compact set.  Let us prove the last inequality. We have
\[
\Omega^{-1}(\{0\})=\bigcup_{j\neq k}\left\{ e\in\mathbb S^{d-1},\,(v_j-v_k,e)_{\R^d}=0 \right\}.
\]
Each set composing the union on the right is of $0$ Lesbegue measure, therefore so is $\Omega^{-1}(\{0\})$. Hence $\mathbb S^{d-1}\setminus \Omega^{-1}(\{0\})\neq\emptyset$  and this proves~\eqref{eq:Omega}.
We can complete $e_1$ into an orthonormal basis $(e_1,\dots ,e_d)$ of $\R^d$ and we infer from \eqref{eq:Omega} that there exists $\tilde\alpha>0$ such that for any $j\neq k$, 
\begin{equation}\label{eq:tilde-alpha}
|(v_j-v_k,e_1)_{\R^d}|\geq \tilde \alpha |v_j-v_k|. 
\end{equation}
Since~\eqref{eq:nlkg2} is 
rotation-invariant, we can assume that $(e_1,\dots ,e_d)$ is the canonical basis of $\R^d$. Calling $v_j^1$ the first component of the j-th velocity vector, up to reindexing the solitons, we can assume  that
$$
v_{1}^1<v_{2}^1<\dots <v_{N}^1.
$$
The localization works as follows.
We first define a partition of unity $(\phi_j)_{j=1,\dots,N}$: take $\psi$ a cutoff function such that 
\begin{gather}
\psi(s)=0\text{ for }s<-1\text{ and }\psi(s)=1\text{ for }s>1,\;  0\le \psi'(s)\le 1\text{ in }[-1,1],\nonumber\\
\exists C>0,\,\forall s\in\R,\;|\psi'(s)|\leq C\sqrt{\psi(s)}. \label{eq:cond-IMS}
\end{gather}
Define
\begin{gather*}
\psi_1(t,x)=1,\; \psi_j(t,x)=\psi\left(\frac 1{\sqrt t}(x^1-m_jt)\right), \;m_j=\frac 12 (v_{j-1}^1+v_j^1).
\\
\phi_j=\psi_j-\psi_{j+1}\text{ for }j=1,\dots,N-1\text{ and }\phi_N=\psi_N.
\end{gather*}
We consider the following \emph{localized action} functional for $W=(w_1,w_2)\in H^1(\R^d)\times L^2(\R^d)$
\[
{\mathcal{S}}(t,W)=\sum_{j=1}^N S_j(t,W)=\sum_{j=1}^N E_j(t,W)+\frac {\omega_j}{\gamma_j}Q_j(t,W)+v_j\cdot P_j(t,W),
\]
where for $j=1,\dots,N$ we have defined the \emph{localized energies, charges and momenta} by
\begin{align*}
 E_j(t,W)&:=\frac 12\int_{\R^d} |\nabla w_1|^2\phi_jdx+
\frac m2\int_{\R^d} | w_1|^2\phi_jdx+\frac 12\int_{\R^d} | w_2|^2\phi_jdx 
-\frac 1{p+1}\int_{\R^d} |w_1|^{p+1}\phi_jdx,\\
Q_j(t,W) &:=\Im  \int_{\R^d} w_1 \bar w_2 \phi_j dx,\\ 
P_j(t,W)&:= \Re  \int_{\R^d} \nabla w_1 \bar w_2 \phi_j dx .
\end{align*}

Since $U_n$ verifies~\eqref{eq:bootstrap-assumption}, we can assume that $T_0$ is large enough, so that $U_n$ satisfies the hypotheses of Proposition~\ref{prop:dyn-mod} and thus there exists a modulated sum of solitons $\tilde R=\sum_{j=1}^N\tilde R_j$ and $\Upsilon_n$ verifying the orthogonality conditions~\eqref{eq:ortho-dyn} such that 
\begin{align}\label{eq:Y-error}
&U_n(t)=\tilde R(t)+\Upsilon_n(t), \nonumber\\
&\norm{\Upsilon_n}_{H^1 \times L^2}\leq Ce^{-\alpha \sqrt{m-\omega_\star^2} v_\star t}.
\end{align}
Let us define the localized linearized action for $\tilde R$ by
\[
\mathcal H_n(\Upsilon_n(t),\Upsilon_n(t)):=\sum_{j=1}^N\dual{S_j''(\tilde R_j(t))\Upsilon_n(t)}{\Upsilon_n(t)}.
\]

It turns out that $\mathcal H_n$ is inheriting the coercivity property of the hessian of the action around a single soliton (Lemma~\ref{lem:coercivity}).

\begin{lemma}[Coercivity]\label{lem:multi-coercivity}
There exists $C>0$ such that  for all $t\in[T_0,T^n]$ the localized Hessian verifies
\[
\mathcal H_n(\Upsilon_n(t),\Upsilon_n(t))\geq C\norm{\Upsilon_n(t)}^2_{H^1\times L^2}.
\]
\end{lemma}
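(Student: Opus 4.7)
\medskip
\noindent\emph{Proof proposal.} The plan is to follow the IMS-localization strategy classical in spectral theory: split $\Upsilon_n$ into pieces concentrated around each soliton, apply the single-soliton coercivity of Lemma~\ref{lem:coercivity} to each piece, and recombine. Define the localized pieces
\[
\Upsilon_n^j := \Upsilon_n \sqrt{\phi_j}, \qquad j=1,\dots,N.
\]
Since the cutoff $\psi$ satisfies the square-root condition \eqref{eq:cond-IMS}, one has $|\nabla \sqrt{\phi_j}|^2 \le C/t$ uniformly in $x$, so the IMS square-root trick works. The separation of the soliton centers grows like $v_\star t$: on the support of $\phi_j$ the profile $\tilde R_j$ is essentially the only one present, and all the other $\tilde R_k$'s (and their derivatives) are exponentially small, with exponent proportional to $\sqrt{m-\omega_\star^2}\,v_\star t$ (this is the analog of Lemma~\ref{lem:interactions-estimates} already used in Section~\ref{sec:modulation}). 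In the same way, $\tilde R_j(1-\sqrt{\phi_j})$ and its derivatives are exponentially small.

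First, I would establish the IMS comparison identity
\[
\sum_{j=1}^N \dual{S''(\tilde R_j)\,\Upsilon_n^j}{\Upsilon_n^j}
= \mathcal H_n(\Upsilon_n,\Upsilon_n) + r_n(t),
\]
where $r_n(t)$ collects three kinds of error: (i) a purely geometric term $\int |\Upsilon_{n,1}|^2 \sum_j |\nabla\sqrt{\phi_j}|^2 = O(1/t)\,\|\Upsilon_n\|_{L^2}^2$ coming from the kinetic part (here the key point is that $\sum_j \phi_j =1$ makes the cross terms in $\sum_j|\nabla(\Upsilon_{n,1}\sqrt{\phi_j})|^2$ collapse); (ii) the momentum cross terms $\sum_j v_j\cdot \Re\int \Upsilon_{n,1}\bar\Upsilon_{n,2}\sqrt{\phi_j}\nabla\sqrt{\phi_j}$, bounded by $C/\sqrt{t}\,\|\Upsilon_n\|_{H^1\times L^2}^2$; and (iii) exponentially small contributions from $\tilde R_j \phi_k$ with $k\neq j$ and from replacing $S_j''(\tilde R_j)$ by $S''(\tilde R_j)\phi_j$. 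Note that the charge piece $Q_j$ and the nonlinear piece of $E_j$ are already expressed through $|\cdot|^2\phi_j$, so they pass to $|\Upsilon_n^j|^2$ without error.

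Second, I would check that each $\Upsilon_n^j$ approximately satisfies the orthogonality conditions~\eqref{eq:moving-ortho} of Lemma~\ref{lem:coercivity} relative to $\tilde R_j$. Starting from~\eqref{eq:ortho-dyn} and writing, for instance,
\[
\psld{\Upsilon_n^j}{i\tilde R_j} = \psld{\Upsilon_n}{i\tilde R_j} - \psld{\Upsilon_n}{i\tilde R_j(1-\sqrt{\phi_j})},
\]
the first term vanishes by~\eqref{eq:ortho-dyn} and the second is exponentially small by the support considerations above. Subtracting from $\Upsilon_n^j$ its (exponentially small) projection onto $\mathrm{Span}\{i\tilde R_j,iJ\tilde R_j,\nabla\tilde R_j\}$, I obtain $\Upsilon_n^{j,\perp}$ exactly orthogonal to this span and differing from $\Upsilon_n^j$ by an exponentially small term in $H^1\times L^2$. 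Applying Lemma~\ref{lem:coercivity} with $(\omega,v)=(\tilde\omega_j,v_j)$ (which lies in $\mathcal O_{\rm stab}$ by the smallness estimate in Proposition~\ref{prop:dyn-mod}) yields
\[
\dual{S''(\tilde R_j)\,\Upsilon_n^{j,\perp}}{\Upsilon_n^{j,\perp}} \ge \delta \,\norm{\Upsilon_n^{j,\perp}}^2_{H^1\times L^2},
\]
and hence the same inequality for $\Upsilon_n^j$ up to exponentially small errors. Summing over $j$ and using $\sum_j \phi_j =1$ together with IMS gives $\sum_j \norm{\Upsilon_n^j}_{H^1\times L^2}^2 \ge \tfrac12 \norm{\Upsilon_n}_{H^1\times L^2}^2$ for $T_0$ large, which combined with the comparison identity yields $\mathcal H_n(\Upsilon_n,\Upsilon_n) \ge \tfrac{\delta}{4}\,\norm{\Upsilon_n}_{H^1\times L^2}^2$ for $T_0$ large enough. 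The smallness of $\norm{\Upsilon_n}$ from~\eqref{eq:Y-error} absorbs the error $O(1/\sqrt t)\|\Upsilon_n\|^2$ against the gain $\delta/2$.

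The main obstacle, as usual in this type of argument, is the second step: reconciling the cut-off localization with the orthogonality conditions~\eqref{eq:moving-ortho}, which are formulated relative to the \emph{global} profile $\tilde R_j$, whereas coercivity is needed for the \emph{localized} piece $\Upsilon_n^j$. The momentum cross terms from the IMS decomposition do \emph{not} cancel when summed over $j$ (unlike the kinetic cross terms) because the weights $v_j$ depend on $j$, and they are merely $O(1/\sqrt t)$; this is precisely what forces $T_0$ to be chosen large and explains why the statement is quantified with a uniform $C>0$ rather than an explicit constant.
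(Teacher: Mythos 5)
Your proposal is correct and follows essentially the same route as the paper: IMS localization of the Hessian via the $\sqrt{\phi_j}$ cutoffs, the $O(1/\sqrt{t})$ bound on $\nabla\sqrt{\phi_j}$, and the single-soliton coercivity of Lemma~\ref{lem:coercivity} applied to each localized piece $\sqrt{\phi_j}\Upsilon_n$, with $T_0$ chosen large enough to absorb the localization errors. You are in fact more explicit than the paper on two points it treats tersely --- the approximate orthogonality of the localized pieces relative to $\tilde R_j$ and the non-cancelling $O(1/\sqrt{t})$ momentum cross terms --- but the underlying argument is the same.
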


In addition, since 
$\mathcal S(t,U_n(t))$ is made of localized versions of conserved quantities, it varies slowly.

\begin{lemma}[Almost conservation]\label{lem:almost-conservation}
For $t\in[t^{\star},T^n]$, we have 
\[
\left|\frac {\partial}{\partial t}\mathcal S(t,U_n(t))\right|\leq 
o(e^{-2\alpha \sqrt{m-\omega^2_\star}v_\star t}).
\]
\end{lemma}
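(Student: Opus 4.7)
The plan is to exploit the fact that the global energy, charge and momentum are conserved along the flow of~\eqref{eq:nlkg2}, so that the time derivative of $\mathcal S(t,U_n(t))$ is forced to come entirely from derivatives of the cutoffs $\phi_j$. First, I would write, using the equation satisfied by $U_n$ and integration by parts, a pointwise conservation-law computation for each localized density. After rearranging, the calculation yields the schematic identity
\[
\frac{d}{dt}\mathcal S(t, U_n(t)) \;=\; \sum_{j=1}^N \int_{\R^d} \Big( \mathcal G_j(U_n)\cdot\nabla\phi_j + \mathcal D_j(U_n)\,\partial_t\phi_j \Big)\,dx,
\]
where $\mathcal G_j$ and $\mathcal D_j$ are polynomial expressions in $(U_n,\nabla u_{n,1})$ of degree at most $p+1$. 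The essential gain is that every integrand is supported in the strips $\{|x^1-m_j t|\le\sqrt t\}$ where the $\phi_j$ are not flat, and on these strips $\norm{\nabla\phi_j}_\infty + \norm{\partial_t\phi_j}_\infty = O(t^{-1/2})$.

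The next step is to substitute the modulated decomposition $U_n = \tilde R + \Upsilon_n$ from Proposition~\ref{prop:dyn-mod} and expand. By the ordering of the speeds and the choice of $e_1$, one has $|v_k^1 - m_j|\ge \tfrac{\tilde\alpha}{2} v_\star$ for all admissible $j,k$, where $\tilde\alpha$ is the constant from~\eqref{eq:tilde-alpha}. Combined with~\eqref{eq:parameters-derivatives-estimate} and the bootstrap assumption, this yields $|\tilde x_k(t) - m_j t|\ge \tfrac{\tilde\alpha}{4}v_\star\, t$ for $t$ large. Hence the exponential decay~\eqref{eq:exponential-decay} (transported through the Lorentz boost) gives on each strip the pointwise bound
\[
|\tilde R_k(t,x)| + |\nabla\tilde R_k(t,x)| \;\le\; C\,e^{-c_0\sqrt{m-\omega_\star^2}\, v_\star\, t},
\]
for a structural constant $c_0 = c_0(d,N) >0$.

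It then remains to bound each type of contribution arising from the expansion. The pure $\tilde R$ contributions carry at least one factor of $\tilde R$, so they are bounded by $C e^{-c_0\sqrt{m-\omega_\star^2} v_\star t}$, which is $o(e^{-2\alpha\sqrt{m-\omega_\star^2}v_\star t})$ as soon as $2\alpha < c_0$. Cross terms linear in $\Upsilon_n$ are handled by Cauchy--Schwarz paired with the exponentially small $\tilde R$-factor, yielding $C e^{-c_0\sqrt{m-\omega_\star^2}v_\star t}\norm{\Upsilon_n}_{H^1\times L^2}\le C e^{-(c_0+\alpha)\sqrt{m-\omega_\star^2}v_\star t}$. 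Finally, terms quadratic or higher in $\Upsilon_n$ inherit the prefactor $O(t^{-1/2})$ and are controlled by $t^{-1/2}\norm{\Upsilon_n}_{H^1\times L^2}^2\le t^{-1/2}e^{-2\alpha\sqrt{m-\omega_\star^2}v_\star t}$. The nonlinear monomial $|u_{n,1}|^{p+1}$ is dealt with using Sobolev embedding $H^1\hookrightarrow L^{p+1}$ together with the uniform bound from~\eqref{eq:bootstrap-assumption}.

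The main technical obstacle I foresee is the careful expansion of $|\tilde R + \Upsilon_n|^{p-1}(\tilde R+\Upsilon_n)$: one must split this nonlinear piece into contributions with at least one soliton factor (exponentially small on the cutoff strip) and a purely $\Upsilon_n$ part (which must be paired with the $O(t^{-1/2})$ gain from the cutoff derivative). Choosing $\alpha = \alpha(d,N)>0$ small enough so that $(p+1)\alpha < c_0$, all the contributions combine to give the claimed $o(e^{-2\alpha\sqrt{m-\omega_\star^2}v_\star t})$ bound.
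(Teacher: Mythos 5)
Your proposal is correct and follows essentially the same strategy as the paper: the time derivative of the localized quantities reduces to flux terms carrying a derivative of the cutoffs, hence supported on the transition strips with an $O(t^{-1/2})$ prefactor, where the solitons are exponentially small and the remainder is controlled by the bootstrap assumption, with $\alpha$ chosen small relative to $\tilde\alpha$. The only cosmetic differences are that the paper works with the unmodulated splitting $U_n=(U_n-R)+R$ rather than the modulated one, and uses $\sum_{j}\phi_j\equiv 1$ to treat the energy part by exact conservation, so that only the localized charge and momentum fluxes need to be estimated.
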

We also have the following Taylor-like expansion for  $\mathcal S(t,U_n(t))$.

\begin{lemma}[Taylor-like expansion]\label{lem:taylor}
The action $\mathcal S(t,U_n(t)) $ satisfies for $t\in[t^{\star},T^n]$
\begin{multline*}
\mathcal S(t,U_n(t)) =\sum_{j=1}^N \left(E(R_j(t))+\frac{\omega_j}{\gamma_j} Q(R_j(t))+v_j\cdot P(R_j(t))\right)\\
+\mathcal H_n(\Upsilon_n(t),\Upsilon_n(t))
+o(e^{-2\alpha \sqrt{m-\omega_\star^2}v_\star t}),
\end{multline*}
\end{lemma}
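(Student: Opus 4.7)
The plan is to Taylor expand each localized functional $S_j(t,\cdot)$ around the modulated sum of solitons $\tilde R$, then simplify term by term using the spatial separation of the solitons, the critical-point property of each $\tilde R_j$, and the orthogonality conditions~\eqref{eq:ortho-dyn}. Writing $U_n = \tilde R + \Upsilon_n$, Taylor's formula gives
\[
S_j(t, \tilde R + \Upsilon_n) = S_j(t, \tilde R) + \dual{S_j'(t, \tilde R)}{\Upsilon_n} + \tfrac12\dual{S_j''(t, \tilde R)\Upsilon_n}{\Upsilon_n} + \mathcal R_j,
\]
where the remainder $\mathcal R_j$ comes only from the nonlinearity $-\tfrac1{p+1}|w_1|^{p+1}\phi_j$ and is bounded by $C\norm{\Upsilon_n}_{H^1\times L^2}^{\min(3,p+1)}$. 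Since $\min(3,p+1)>2$, the bootstrap assumption~\eqref{eq:bootstrap-assumption} gives $\mathcal R_j = o(e^{-2\alpha\sqrt{m-\omega_\star^2}v_\star t})$.

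For the zeroth-order sum $\sum_j S_j(t,\tilde R)$, I expand $\tilde R=\sum_k\tilde R_k$. On the support of $\phi_j$, every $\tilde R_k$ with $k\neq j$ is exponentially small in $t$ because $|\tilde x_j-\tilde x_k|\gtrsim t$ (via the interaction estimates of Lemma~\ref{lem:interactions-estimates}); simultaneously $\phi_j\equiv 1$ on a ball of radius $\sim t$ around $\tilde x_j$, while $\tilde R_j$ decays exponentially away from its center. This reduces $S_j(t,\tilde R)$ to $E(\tilde R_j)+\tfrac{\omega_j}{\gamma_j}Q(\tilde R_j)+v_j\cdot P(\tilde R_j)$ up to exponentially small errors in $t$. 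Now $R_j$ is a critical point of $E+\tfrac{\omega_j}{\gamma_j}Q+v_j\cdot P$, so the scalar function $\omega\mapsto (E+\tfrac{\omega_j}{\gamma_j}Q+v_j\cdot P)(\Phi_{\omega,v_j})$ has vanishing derivative at $\omega=\omega_j$, and its value at $\tilde\omega_j$ agrees with its value at $\omega_j$ up to $O(|\tilde\omega_j-\omega_j|^2)$. Integrating~\eqref{eq:parameters-derivatives-estimate} backwards from $T^n$ (where $\tilde\omega_j=\omega_j$ because $U_n(T^n)=R(T^n)$) against the bootstrap bound yields $|\tilde\omega_j-\omega_j|=O(e^{-2\alpha\sqrt{m-\omega_\star^2}v_\star t})$, so the discrepancy is $O(e^{-4\alpha\sqrt{m-\omega_\star^2}v_\star t})=o(e^{-2\alpha\sqrt{m-\omega_\star^2}v_\star t})$.

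For the first-order sum, the same separation/cutoff argument reduces $\dual{S_j'(t,\tilde R)}{\Upsilon_n}$ to $\dual{(E+\tfrac{\omega_j}{\gamma_j}Q+v_j\cdot P)'(\tilde R_j)\phi_j}{\Upsilon_n}$ modulo exponentially-in-$t$ small contributions (interaction cross-terms, and $\nabla\phi_j$ boundary terms from integration by parts, which are small because $\nabla\phi_j$ is supported where $\tilde R_j$ is itself exponentially small). Since $\tilde R_j$ solves~\eqref{eq:snlkg} with parameters $(\tilde\omega_j,v_j)$, this derivative simplifies to $\tfrac{\omega_j-\tilde\omega_j}{\gamma_j}iJ\tilde R_j$, and combining the orthogonality $\psld{\Upsilon_n}{iJ\tilde R_j}=0$ with $|\omega_j-\tilde\omega_j|=O(e^{-2\alpha\sqrt{m-\omega_\star^2}v_\star t})$ and the bootstrap yields a total contribution of order $o(e^{-2\alpha\sqrt{m-\omega_\star^2}v_\star t})$. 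For the second-order sum, only the nonlinear part of the Hessian depends on the base point, and on the support of $\phi_j$ one has $\tilde R-\tilde R_j=\sum_{k\neq j}\tilde R_k$ which is exponentially small in $t$; hence $\dual{(S_j''(\tilde R)-S_j''(\tilde R_j))\Upsilon_n}{\Upsilon_n}=o(\norm{\Upsilon_n}^2_{H^1\times L^2})=o(e^{-2\alpha\sqrt{m-\omega_\star^2}v_\star t})$, and what remains is precisely $\mathcal H_n(\Upsilon_n,\Upsilon_n)$ (the Taylor coefficient $\tfrac12$ being absorbed into the definition of $\mathcal H_n$).

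The main obstacle is the systematic bookkeeping of all these errors: most of them are of the form (exponentially-small-in-$t$)$\times\norm{\Upsilon_n}^k$ for $k\in\{0,1,2\}$, and each must be shown to be $o(e^{-2\alpha\sqrt{m-\omega_\star^2}v_\star t})$. This forces $\alpha=\alpha(d,N)$ to be chosen small enough compared with both the exponential decay rate of the ground state profiles (controlled by $\sqrt{m-\omega_\star^2}$) and the rate at which $\phi_j$ separates the solitons (controlled by the effective relative velocity $v_\star$); this compatibility condition, already needed in Lemmas~\ref{lem:multi-coercivity} and~\ref{lem:almost-conservation}, must be tracked consistently throughout.
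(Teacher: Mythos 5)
Your argument is correct and, for the main expansion, it mirrors the paper's proof: Taylor expansion of the (localized) action at the modulated sum $\tilde R$, reduction of zeroth/first/second order terms to single-soliton quantities via the interaction estimates of Lemma~\ref{lem:interactions-estimates} and the localization $\phi_j$, use of the criticality of $\tilde R_j$ for $E+\frac{\tilde\omega_j}{\gamma_j}Q+v_j\cdot P$ to reduce the first-order term to $\frac{\omega_j-\tilde\omega_j}{\gamma_j}Q'(\tilde R_j)\Upsilon_n$, and removal of the tildes in the zeroth-order term at cost $O(|\tilde\omega_j-\omega_j|^2)$ since the first $\omega$-derivative vanishes at $\omega_j$. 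The one genuinely different ingredient is how you obtain the key estimate $|\tilde\omega_j(t)-\omega_j|=O(e^{-2\alpha\sqrt{m-\omega_\star^2}v_\star t})$: the paper proves this as a separate statement (Lemma~\ref{lem:claim3}) by combining the almost-conservation of the localized charge $Q_j$ (from~\eqref{eq:der-q}) with the strict monotonicity of $\omega\mapsto Q(\Phi_{\omega,v})$ on $\mathcal O_{\rm stab}$, i.e.\ it re-uses the stability condition through the non-vanishing of $\partial_\omega Q$; you instead integrate the modulation estimate~\eqref{eq:parameters-derivatives-estimate} backwards from $T^n$, using $\tilde\omega_j(T^n)=\omega_j$ (uniqueness of the modulation decomposition at the final time, where $\Upsilon_n(T^n)=0$) and $\norm{\Upsilon_n}_2^2\lesssim e^{-2\alpha\sqrt{m-\omega_\star^2}v_\star t}$ from the bootstrap. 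Both routes are legitimate and non-circular; yours is more elementary at this point but leans crucially on the \emph{quadratic} bound $|\partial_t\tilde\omega_j|\lesssim\norm{\Upsilon_n}_2^2$ in~\eqref{eq:parameters-derivatives-estimate} (the improvement~\eqref{eq:better-estimate} due to the choice of orthogonality conditions) --- with only a linear bound the integration would give $e^{-\alpha t}$, which is not enough --- while the paper's charge-monotonicity argument does not need that refinement but does need $\omega_j\in\mathcal O_{\rm stab}$. Two fine points you handle correctly and which the paper glosses over: the factor $\tfrac12$ from Taylor's formula versus the definition of $\mathcal H_n$ (harmless, since only the coercivity and the $o(e^{-2\alpha\sqrt{m-\omega_\star^2}v_\star t})$ size of the quadratic term are used in the bootstrap), and the replacement of $\phi_j\,iJ\tilde R_j$ by $iJ\tilde R_j$ before invoking orthogonality, which costs only an exponentially small error since $1-\phi_j$ is supported where $\tilde R_j$ is exponentially small (and is in fact not even needed, as $|\omega_j-\tilde\omega_j|\,\norm{\Upsilon_n}_{H^1\times L^2}$ is already $O(e^{-3\alpha\sqrt{m-\omega_\star^2}v_\star t})$).
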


With Lemmas~\ref{lem:multi-coercivity},~\ref{lem:almost-conservation} and~\ref{lem:taylor} in hand, we can now conclude the proof of Proposition~\ref{prop:bootstrap}.

\begin{proof}[Proof of Proposition~\ref{prop:bootstrap}]
The first step is to show that
\begin{equation}\label{eq:esti-Upsilon}
\norm{\Upsilon_n}_{H^1 \times L^2}^2=o\left(e^{-2\alpha\sqrt{m-\omega_{\star}^2}v_{\star}t}\right).
\end{equation}
Indeed, thanks to Lemma~\ref{lem:almost-conservation} we obtain 
\begin{equation}\label{eq:bored}
 \mathcal S(t,U_n(t))\leq \mathcal S(T^n,U_n(T^n))
+o(e^{-2\alpha\sqrt{m-\omega_{\star}^2}v_{\star}t}).
\end{equation}
Now notice that $\sum_{j=1}^N\left(E(R_j(t))+\frac{\omega_j}{\gamma_j} Q(R_j(t))+v_j\cdot P(R_j(t))\right)$ is a time independent quantity. Therefore, 
\begin{multline*}
\sum_{j=1}^N \left(E(R_j(t))+\frac{\omega_j}{\gamma_j} Q(R_j(t))+v_j\cdot P(R_j(t))\right)
\\
=\sum_{j=1}^N \left(E(R_j(T_n))+\frac{\omega_j}{\gamma_j} Q(R_j(T_n))+v_j\cdot P(R_j(T_n))\right)
=\mathcal S(T_n,U_n(T_n)).
\end{multline*}
Combined with Lemma~\ref{lem:taylor} and~\eqref{eq:bored}, this implies
\begin{multline*}
\mathcal H(\Upsilon_n(t),\Upsilon_n(t))
=\mathcal S(t,U_n(t))-\mathcal S(T_n,U_n(T_n))+o\left(e^{-2\alpha\sqrt{m-\omega_{\star}^2}v_{\star}t}\right)
=
o\left(e^{-2\alpha\sqrt{m-\omega_{\star}^2}v_{\star}t}\right).
\end{multline*}
By Lemma~\ref{lem:multi-coercivity} we get
 \[
C\norm{\Upsilon_n}_{H^1 \times L^2}^2\leq \mathcal H(\Upsilon_n(t),\Upsilon_n(t))
\leq
o\left(e^{-2\alpha\sqrt{m-\omega_{\star}^2}v_{\star}t}\right).
\]
Hence~\eqref{eq:esti-Upsilon} is proved.
Now we have
$$
\norm{U_n-R}^2_{H^1\times L^2}\leq 2\norm{\tilde R-R}^2_{H^1\times L^2}+2\norm{\Upsilon_n}^2_{H^1\times L^2},
$$
such that, by~\eqref{eq:esti-Upsilon} and~\eqref{eq:parameters-derivatives-estimate} we infer
\begin{multline*}
\norm{U_n-R}^2_{H^1\times L^2}
\leq C\bigg(\sum_{j=1}^N|\tilde \omega_j(t)-\omega_j|^2+|\tilde \theta_j(t)-\theta_j|^2+|\tilde x_j(t)-x_j|^2\bigg)
+o\left(e^{-2\alpha\sqrt{m-\omega_{\star}^2}v_{\star}t}\right)
\\
\leq 
o\left(e^{-2\alpha\sqrt{m-\omega_{\star}^2}v_{\star}t}\right).
\end{multline*}
Choosing $t$ large enough we have
$$
\norm{U_n-R}_{H^1\times L^2}\leq\frac{1}{2}e^{-\alpha\sqrt{m-\omega_{\star}^2}v_{\star}t}.
$$
This concludes the proof.
\end{proof}

\subsection{Coercivity}

\emph{From now on and until the end of this paper, the subscript $n$ is removed  when there is no possible confusion. For example,  $U_n$ is now denoted simply by $U$.
}

We first prove  Lemma~\ref{lem:multi-coercivity}.

\begin{proof}[Proof of Lemma~\ref{lem:multi-coercivity}]
From Lemma~\ref{lem:coercivity}, we already know that for any $j=1,\dots,N$ we have
\[
\dual{S''(\tilde R_j(t))\Upsilon(t)}{\Upsilon(t)}\geq C\norm{\Upsilon(t)}^2_{H^1\times L^2}
\]
where the dependency of $S$ in $j$ is understood (recall that $S=E+\frac{\tilde \omega_j}{\gamma_j}Q+v_j\cdot P$).
We remark that 
\begin{multline*}
\dual{Q''_j(\tilde R_j)\Upsilon(t)}{\Upsilon(t)}=\Im\int_{\R^d}\Upsilon_1(t)\bar\Upsilon_2(t)\phi_jdx\\
=\Im\int_{\R^d}(\sqrt{\phi_j}\Upsilon_1(t)\overline{(\sqrt{\phi_j}\Upsilon_2(t))}dx=\dual{Q''(\tilde R_j)\sqrt{\phi_j}\Upsilon(t)}{\sqrt{\phi_j}\Upsilon(t)}.
\end{multline*}
Similar computations can be performed for the momentum and the $0$-order part of the energy. We deal with the gradient part by means of the classical IMS localization formula (see e.g. \cite{Si82}):
\[
\norm{\nabla \Upsilon_1}_2^2= \sum_{j=1}^N\left(\norm*{\nabla\left(\sqrt{\phi_j}\Upsilon_1\right)}_2^2-\norm*{\left|\nabla\left(\sqrt{\phi_j}\right)\right|\Upsilon_1}_2^2\right).
\]
Straightforward computations using the definition of the cutoff functions $\phi_j$  and~\eqref{eq:cond-IMS} imply that 
 \[
\norm*{\nabla\left(\sqrt{\phi_j}\right)}_\infty\leq \frac{C'}{\sqrt{t}}
\]
This implies that 
\[
\norm*{\nabla\left(\sqrt{\phi_j}\right)\Upsilon_1}_2^2\leq \frac{C'}{\sqrt{t}}\norm{\Upsilon}^2_{H^1\times L^2}.
\]
Combining these informations,
 we infer that
\begin{multline*}
\mathcal H(\Upsilon(t),\Upsilon(t))=
\sum_{j=1}^N\dual{S_j''(\tilde R_j(t))\Upsilon(t)}{\Upsilon(t)}=\\
\sum_{j=1}^N\dual{S''(\tilde R_j(t))\sqrt{\phi_j}\Upsilon(t)}{\sqrt{\phi_j}\Upsilon(t)}-\norm*{\nabla\left(\sqrt{\phi_j}\right)\Upsilon_1}_2^2\\
\geq \left(C-\frac{C'}{\sqrt{t}}\right)\norm{\Upsilon(t)}^2_{H^1\times L^2} \geq \frac C2\norm{\Upsilon(t)}^2_{H^1\times L^2},
\end{multline*}
where the last inequality follows from the fact that $t\geq T_0$ and $T_0$ can be chosen so that $T_0\geq \left(\frac{2C'}{C}\right)^2$.
\end{proof}

\subsection{Almost conservation}

In this section, we prove Lemma~\ref{lem:almost-conservation}. Recall that we have assumed that $U\equiv U_n$ verifies the bootstrap assumption~\eqref{eq:bootstrap-assumption}.
We start with a preliminary lemma.

\begin{lemma}\label{lem:deriv-local}
Let $\phi$ be a $\mathcal C^1$ function of the variable $x_1$ such that $\phi$ and $\phi'$ are bounded . Then for all $t$ in the time interval of existence of $U$ we have
\begin{align*}
\frac{\partial}{\partial t}\Im\int_{\R^d}u_1\bar u_2\phi(x_1)dx&=\Im\int_{\R^d}\partial_{x_1}u_1\bar u_1\phi'(x_1)dx,\\
\frac{\partial}{\partial t}\Re\int_{\R^d}\partial_{x_j}u_1\bar u_2\phi(x_1)dx&=-\Re\int_{\R^d}\partial_{x_j}u_1\partial_{x_1}\bar u_1\phi'(x_1)dx \quad(j\neq1),\\
\frac{\partial}{\partial t}\Re\int_{\R^d}\partial_{x_1}u_1\bar u_2\phi(x_1)dx&=\int_{\R^d}\bigg(-|\partial_{x_1}u_1|^2+\frac{1}{2}\left(|\nabla u_1|^2+m|u_1|^2-|u_2|^2\right)
\nonumber\\
	&\hspace{3cm}
-\frac{1}{p+1}|u_1|^{p+1}\bigg)\phi'(x_1)dx.
\end{align*}
\end{lemma}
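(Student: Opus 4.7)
The three identities are just the localized/pointwise versions of the conservation laws for charge $Q$ and for the $j$-th component of momentum $P_j$, so the natural approach is to differentiate under the integral sign, substitute the Hamiltonian equations, and then integrate by parts, exploiting the fact that $\nabla\phi=\phi'(x_1)e_1$ only has a component in the $x_1$-direction.

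Concretely, from the Hamiltonian formulation $\partial_t U=JE'(U)$ one reads off $\partial_t u_1=u_2$ and $\partial_t u_2=\Delta u_1-mu_1+|u_1|^{p-1}u_1$. For the first identity I would differentiate $\Im\int u_1\bar u_2\,\phi\,dx$ in $t$: the $|u_2|^2\phi$ contribution is real and drops out when taking imaginary parts, the mass and nonlinear terms $-m|u_1|^2\phi$ and $|u_1|^{p+1}\phi$ are real and drop out for the same reason, and only $\Im\int u_1\Delta\bar u_1\,\phi\,dx$ survives. Integrating by parts once, $\int \phi|\nabla u_1|^2\,dx$ is real and disappears under $\Im$, while $-\int u_1\nabla\bar u_1\cdot\nabla\phi\,dx=-\int u_1\partial_{x_1}\bar u_1\,\phi'(x_1)\,dx$ remains, and the identity $\Im(u_1\partial_{x_1}\bar u_1)=-\Im(\bar u_1\partial_{x_1}u_1)$ yields the claimed right-hand side.

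For identities (2) and (3) the plan is the same: differentiate $\Re\int\partial_{x_j}u_1\bar u_2\,\phi\,dx$ in $t$, substitute the evolution equations, and integrate by parts. The key observation is that $\Re(\partial_{x_j}u_2\bar u_2)=\tfrac12\partial_{x_j}|u_2|^2$, $\Re(\partial_{x_j}u_1\bar u_1)=\tfrac12\partial_{x_j}|u_1|^2$, and $\Re(\partial_{x_j}u_1\,|u_1|^{p-1}\bar u_1)=\tfrac{1}{p+1}\partial_{x_j}|u_1|^{p+1}$, so an integration by parts moves a derivative onto $\phi$ and produces $\partial_{x_j}\phi=\phi'(x_1)\delta_{j1}$. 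When $j\neq 1$ all of these vanish, while for the Laplacian term, integrating by parts twice gives
\[
\Re\int\partial_{x_j}u_1\,\Delta\bar u_1\,\phi\,dx=-\tfrac12\int|\nabla u_1|^2\partial_{x_j}\phi\,dx-\Re\int\partial_{x_j}u_1\,\partial_{x_1}\bar u_1\,\phi'(x_1)\,dx,
\]
in which, again for $j\neq 1$, the first piece vanishes, leaving exactly~(2). For $j=1$ the same three algebraic manipulations now each produce a boundary term of the form $-\tfrac12\int(\cdot)\phi'\,dx$, and the Laplacian computation above gives $\tfrac12\int|\nabla u_1|^2\phi'\,dx-\int|\partial_{x_1}u_1|^2\phi'\,dx$; summing all contributions reproduces (3) exactly.

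\textbf{Main obstacle.} There is no real difficulty: the argument is bookkeeping around integration by parts and separating real/imaginary parts. The only point requiring some care is justifying that all formal calculations — differentiation under the integral and the various integrations by parts — are rigorous. For this I would appeal to the Cauchy theory in $H^1\times L^2$, work first for smooth data (e.g.\ in $H^2\times H^1$) where $\partial_t u_2\in L^2$ makes every manipulation trivially legal, and then pass to the $H^1\times L^2$ limit using the continuous dependence stated in the Cauchy Theory, together with the $H^1$-subcritical bound on $|u_1|^{p-1}u_1$; the boundedness of $\phi$ and $\phi'$ ensures the integrals in the final identities are finite for any $U\in H^1\times L^2$.
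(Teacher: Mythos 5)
Your proof is correct and takes essentially the same route as the paper, which simply states that the identities follow from elementary computations using that $U$ solves the Hamiltonian system~\eqref{eq:nlkg2}; your substitution of $\partial_t u_1=u_2$, $\partial_t u_2=\Delta u_1-mu_1+|u_1|^{p-1}u_1$ followed by integration by parts with $\nabla\phi=\phi'(x_1)e_1$ is exactly that computation, and the regularization via $H^2\times H^1$ data plus continuous dependence is the standard justification. The only blemish is a sign slip in your displayed Laplacian identity, whose first term should read $+\tfrac12\int|\nabla u_1|^2\,\partial_{x_j}\phi\,dx$; this is harmless, since that term vanishes for $j\neq 1$ and in the $j=1$ case you in fact use the correct value $+\tfrac12\int|\nabla u_1|^2\,\phi'(x_1)\,dx$, so the sum does reproduce the third identity.
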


\begin{proof}
The results follows from elementary computations using the fact that $U$ is a solution to~\eqref{eq:nlkg2}. 
\end{proof}

\begin{proof}[Proof of Lemma~\ref{lem:almost-conservation}]
Let $U=(u_1,u_2)$ and let us start by looking at the derivative of the localized charge. By Lemma~\ref{lem:deriv-local} we have 
\begin{eqnarray*}
  \left|\frac{\partial}{\partial t}\Im  \int_{\R^d} u_1 \bar u_2 \psi_j dx\right|&=&\left|\Im  \int_{\R^d} \partial_{x_1} u_1 \bar u_1   \partial_{x_1}\psi_j dx + 
\Im  \int u_1 \bar u_2  \partial_{t}\psi_j dx\right|\\
&=& \frac1{\sqrt t}\left|\Im\int_{\R^d} \left(\partial_{x_1} u_1 \bar u_1 
-\frac{m_j}{2}u_1 \bar u_2 \right)\psi'\left(\frac {x^1-m_jt}{\sqrt{t}}\right)dx\right|
\\
&\leq&\frac{C}{\sqrt{t}}\int_{\tilde A_j} (|\partial_{x_1} u_1 \bar u_1|+|u_1 \bar u_2|)dx,
\end{eqnarray*}
where $\tilde A_j:=\{ x \in \R^d;\;\psi_j'(x)\neq 0 \}$. Remembering that
$\phi_j=\psi_j-\psi_{j+1}$ for $j=1,\dots,N-1$ and $\phi_N=\psi_N$,
and defining $\ A_j:=\{ x \in \R^d;\;\ \phi_j' \neq 0 \}$ we have
$$ 
\left|\frac{\partial}{\partial t}\Im  \int_{\R^d} u_1 \bar{u_2} \phi_j dx\right|\leq \frac{C}{\sqrt{t}}\int_{A_j} (|\partial_{x_1} u_1 \bar u_1|+|u_1 \bar u_2|)dx.
$$
and then
\begin{equation*}
\left|\frac{\partial}{\partial t}\Im  \int u_1 \bar{u_2} \phi_j dx\right|\leq \frac{C}{\sqrt{t}}\left(\|u_1\|_{H^1(A_j)}^2+\|u_2\|_{L^2(A_j)}^2\right).
\end{equation*}
Now notice that $\norm{U}^2_{H^1(A_j)\times L^2(A_j)}\leq 2\norm{U-R}^2_{H^1(\R^d)\times L^2(\R^d)}+2\norm{R}^2_{H^1(A_j)\times L^2(A_j)}$. 
Thanks to Lemma~\ref{lem:interactions-estimates} we have
$$
\norm{R}^2_{H^1(A_j)\times L^2(A_j)}\leq \norm{R_j}^2_{H^1(A_j)\times L^2(A_j)}+O(e^{-3\alpha\sqrt{m-\omega_\star^2}v_\star t}).
$$
By using the properties of our partition of unity and the decay of the profile of $R_j$
it follows that
$$
\norm{R_j}^2_{H^1(A_j)\times L^2(A_j)}\leq \int_{|x^1|\geq\frac{\tilde\alpha}{2}v_{\star}t}Ce^{-\frac 12\sqrt{m-\omega_{\star}^2}|x|}dx\leq Ce^{-\frac {\tilde\alpha}{4} \sqrt{m-\omega^2_\star}v_\star t}.
$$
Recall that $\tilde\alpha$ stems from~\eqref{eq:tilde-alpha}. 
We conclude thanks to the bootstrap assumption~\eqref{eq:bootstrap-assumption} that 
\begin{equation}\label{eq:der-q}
\left|\frac {\partial}{\partial t} Q_j(t,U(t))\right|= o(e^{-2\alpha \sqrt{m-\omega^2_\star}v_\star t})
\end{equation}
if we choose $\alpha<\frac{\tilde\alpha}{8}.$
Now we focus on the derivative of the localized momenta. We start with the first component of the momentum. By Lemma~\ref{lem:deriv-local} we have 
\begin{multline*}
\left|\frac{\partial}{\partial t} \Re  \int \partial_{x_1}u_1 \bar u_2\psi_jdx\right|\\
\le \frac {1}{\sqrt t}\int_{\R^d}  \bigg[ \partial_{x_1}u_1\bar u_2-
\left( -\frac{m_1}{2}|\partial_{x_1}  u_1|^2+\frac {|u_2|^2}2-\frac {|\nabla u_1|^2}2-\frac {m|u_1|^2}2\right)\\
\shoveright{- \frac{|u_1|^{p+1}}{p+1}\bigg]\psi'\left(\frac {x_1-m_1t}{\sqrt{t}}\right)dx}
 \\
\le \frac {C}{\sqrt t}\left[ \|u_1\|_{H^1(A_j)}^2+\|u_2\|_{L^2(A_j)}^2+\|u_1\|^{p+1}_{L^{p+1}(A_j)}\right] \\
\le \frac {C}{\sqrt t}\left[ \|u_1\|_{H^1(A_j)}^2+\|u_2\|_{L^2(A_j)}^2+\|u_1\|^{p+1}_{H^1(A_j)}\right].
\end{multline*}
Now we argue as for the derivative of the localized charge. The other components of the momentum can be estimated in a similar fashion. Thus we have
\begin{equation}\label{eq:der-p}
\left| \frac{\partial}{\partial t} P_j(t,U(t))\right|= o(e^{-2\alpha \sqrt{m-\omega^2_\star}v_\star t}).
\end{equation}
Remark now that 
\[
\mathcal S(t,U(t))=E(U(t))+\sum_{j=1}^N \frac{\omega_j}{\gamma}Q_j(t,U(t))+v_j\cdot P_j(t,U(t)).
\]
Since $E$ is a conserved quantity, combining~\eqref{eq:der-q} and~\eqref{eq:der-p} gives the desired result. 
\end{proof}

\subsection{The Taylor expansion}

We now prove Lemma~\ref{lem:taylor}. We start by an estimate on the modulation parameters.

\begin{lemma} \label{lem:claim3} For any $t\in[t^{\star},T^n]$, we have
\[
\sum_{j=1}^N|\tilde \omega_j(t)-\omega_j|=O(e^{-2\alpha\sqrt{m-\omega_\star^2}v_\star t}).
\]
\end{lemma}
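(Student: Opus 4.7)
The plan is to combine the sharpened modulation estimate for $\partial_t\tilde\omega_j$ from Proposition~\ref{prop:dyn-mod} with the bootstrap assumption and integrate backward from the final time $T^n$. The crucial point is that the modulation result gives an estimate on $\partial_t\tilde\omega_j$ that is quadratic in $\norm{\Upsilon}_{H^1\times L^2}$, not merely linear; this is exactly what is needed to match the exponential rate $2\alpha\sqrt{m-\omega_\star^2}v_\star$ appearing on the right-hand side.

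First, I would observe that $U_n$ satisfies $U_n(T^n)=R(T^n)$, and by the uniqueness part of Lemma~\ref{lem:modulation-static-2} the decomposition $U_n(T^n)=\sum_j\tilde R_j(T^n)+\Upsilon_n(T^n)$ must coincide with the trivial one, i.e.\ $\tilde\omega_j(T^n)=\omega_j$ for every $j=1,\dots,N$ (and $\Upsilon_n(T^n)=0$). This provides the terminal condition for the integration.

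Next, the bootstrap assumption~\eqref{eq:bootstrap-assumption} yields $\norm{\Upsilon_n(s)}_{H^1\times L^2}^2\leq e^{-2\alpha\sqrt{m-\omega_\star^2}v_\star s}$ for all $s\in[t^\dagger,T^n]$, and in particular $\norm{\Upsilon_n(s)}_2^2\leq e^{-2\alpha\sqrt{m-\omega_\star^2}v_\star s}$. Plugging into the refined modulation bound~\eqref{eq:parameters-derivatives-estimate} of Proposition~\ref{prop:dyn-mod} gives, for each $j$,
\[
|\partial_s\tilde\omega_j(s)|\leq C\bigl(\norm{\Upsilon_n(s)}_2^2+e^{-3\alpha\sqrt{m-\omega_\star^2}v_\star s}\bigr)\leq C\,e^{-2\alpha\sqrt{m-\omega_\star^2}v_\star s}.
\]

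Finally, integrating this estimate from $t$ to $T^n$ and using $\tilde\omega_j(T^n)=\omega_j$,
\[
|\tilde\omega_j(t)-\omega_j|=\Bigl|\int_t^{T^n}\partial_s\tilde\omega_j(s)\,ds\Bigr|\leq C\int_t^{+\infty}e^{-2\alpha\sqrt{m-\omega_\star^2}v_\star s}\,ds\leq C'\,e^{-2\alpha\sqrt{m-\omega_\star^2}v_\star t},
\]
and summing over $j=1,\dots,N$ yields the claim. The only subtle point worth underlining is that the quadratic dependence on $\norm{\Upsilon_n}$ in~\eqref{eq:parameters-derivatives-estimate} (which itself was obtained by a careful exploitation of the orthogonality conditions in the proof of Proposition~\ref{prop:dyn-mod}) is indispensable here: a merely linear bound would only give decay at rate $\alpha\sqrt{m-\omega_\star^2}v_\star$, which would not close the bootstrap in the subsequent arguments.
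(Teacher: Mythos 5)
Your argument is correct, but it follows a genuinely different route from the paper. You integrate the refined modulation law \eqref{eq:parameters-derivatives-estimate} backward from $T^n$: the quadratic-in-$\Upsilon$ bound on $|\partial_t\tilde\omega_j|$, combined with the bootstrap assumption \eqref{eq:bootstrap-assumption} (via \eqref{eq:Y-error}, which is what actually converts $\norm{U_n-R}_{H^1\times L^2}\leq e^{-\alpha\sqrt{m-\omega_\star^2}v_\star t}$ into $\norm{\Upsilon_n}^2_{H^1\times L^2}\leq Ce^{-2\alpha\sqrt{m-\omega_\star^2}v_\star t}$) and the terminal identification $\tilde\omega_j(T^n)=\omega_j$, gives the rate $2\alpha$ after integration. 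The paper instead never touches the ODE for $\tilde\omega_j$ here: it expands the localized charge, $Q_j(t,U)=Q(\tilde R_j)+\Im\int\Upsilon_1\bar\Upsilon_2\phi_j\,dx+O(e^{-3\alpha\sqrt{m-\omega_\star^2}v_\star t})$, uses the almost conservation $|Q_j(t,U(t))-Q_j(T^n,U(T^n))|=o(e^{-2\alpha\sqrt{m-\omega_\star^2}v_\star t})$ already established in \eqref{eq:der-q}, and then inverts a Taylor expansion of $\omega\mapsto Q(\Phi_{\omega,v_j})$, whose derivative is nonzero precisely because $\omega_j\in\mathcal O_{\rm stab}$. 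Note that the two proofs ultimately rest on the same nondegeneracy: the coefficient $\Lambda_{\tilde\omega_k}Q(\tilde R_k)$ that makes the modulation system solvable (and yields the quadratic improvement \eqref{eq:better-estimate} you rely on) is the same quantity, cf.\ \eqref{eq:negativity-1}, that the paper invokes directly. Your route is shorter given Proposition~\ref{prop:dyn-mod}, and your observation that a merely linear bound on $\partial_t\tilde\omega_j$ would only yield the rate $\alpha$ is exactly right; the paper's route has the advantage of reusing the charge almost-conservation computation needed anyway for Lemma~\ref{lem:almost-conservation}, in the spirit of the mass-based arguments for (\textsc{nls}) multi-solitons. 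Two small points to tighten: the terminal condition $\tilde\omega_j(T^n)=\omega_j$ (and $\Upsilon_n(T^n)=0$) should be justified by the uniqueness part of Lemma~\ref{lem:modulation-static-2} applied to $U_n(T^n)=R(T^n)$, for which the trivial decomposition already satisfies the orthogonality conditions — the paper uses this same fact, so you are on equal footing — and the passage from the bootstrap bound to $\norm{\Upsilon_n}_2^2\leq Ce^{-2\alpha\sqrt{m-\omega_\star^2}v_\star t}$ should cite the modulation estimate rather than be read off verbatim from \eqref{eq:bootstrap-assumption}.
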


\begin{proof}
Recall that $U=\sum_{j=1}^N\tilde R_j+\Upsilon$. Thanks to the interaction estimates given by Lemma~\ref{lem:interactions-estimates} and the orthogonality conditions it follows 
\begin{equation}\label{eq:e1}
Q_j(t,U)=Q(\tilde R_j)+\Im \int_{\R^d}\Upsilon_1\bar\Upsilon_2\phi_jdx+O(e^{-3\alpha \sqrt{m-\omega_{\star}^2}v_{\star}t}).
\end{equation}
We already computed the time-derivative of $Q_j$ during the proof of Lemma~\ref{lem:almost-conservation} (see~\eqref{eq:der-q}), and it implies 
\begin{equation}\label{eq:e2}
|Q_j(t,U(t))-Q_j(T^n,U(T_n))|=o(e^{-2\alpha \sqrt{m-\omega^2_\star}v_\star t}).
\end{equation}
Thanks to the scaling property \eqref{eq:scaled-phi} of the profile  we get
\begin{multline*}
Q(\tilde R_j(t))-Q(\tilde R_j(T_n))\\
=\gamma_j\left(-\tilde \omega_j(t)
\left(m-\tilde \omega_j(t)^2\right)^{\frac{2}{p-1}-\frac{d}{2}}
+\tilde \omega_j(T^n)\left(m-\tilde \omega_j(T^n)^2\right)^{\frac{2}{p-1}-\frac{d}{2}}\right)\norm{\tilde \varphi(x)}_2^2,\\
=\gamma_j\left(-\tilde \omega_j(t)
\left(m-\tilde \omega_j(t)^2\right)^{\frac{2}{p-1}-\frac{d}{2}}
+\tilde \omega_j\left(m-\tilde \omega_j^2\right)^{\frac{2}{p-1}-\frac{d}{2}}\right)\norm{\tilde \varphi(x)}_2^2,
\end{multline*}
where the last inequality is due to the fact that $\tilde \omega_j(T^n)=\omega_j$.
By simple Taylor expansion in frequencies we conclude
\begin{multline}\label{eq:e3}
Q(\tilde R_j(t))-Q(\tilde R_j(T_n))\\
\gamma_j\left[-(m-\omega_j^2)^{\frac{2}{p-1}-\frac{d}{2}}+2\omega_j^2(\frac{2}{p-1}-\frac{d}{2})(m-\omega_j^2)^{\frac{2}{p-1}-\frac{d}{2}-1}\right](\tilde \omega_j(t)-\omega_j)\norm{\tilde \varphi(x)}_2^2\\
+o(\tilde \omega_j(t)-\omega_j).
\end{multline}
Since $\omega_j$ is part of the set $\mathcal O_{\rm stab}$ (see \eqref{eq:O-stab}) we have
$$
-(m-\omega_j^2)^{\frac{2}{p-1}-\frac{d}{2}}+2\omega_j^2(\frac{2}{p-1}-\frac{d}{2})(m-\omega_j^2)^{\frac{2}{p-1}-\frac{d}{2}-1}>0.
$$
Combining the bootstrap assumption \eqref{eq:bootstrap-assumption}, and \eqref{eq:e1}-\eqref{eq:e3} gives the desired result.
\end{proof}

\begin{proof}[Proof of Lemma~\ref{lem:taylor}]
The first step consists in splitting the action using $U=\sum_{j=1}^N\tilde R_j+\Upsilon$.
We start with the energy part. We have
\begin{multline*}
\sum_{j=1}^N E_j(t,U)=E(U)=E(\tilde R+\Upsilon)
=E(\tilde R)+E'(\tilde R)\Upsilon+\frac12\dual{E''(\tilde R)\Upsilon}{\Upsilon}+o(\norm{\Upsilon}_{H^1\times L^2}^2).
\end{multline*}
We treat the $0$ order term first. By Lemma~\ref{lem:interactions-estimates}, we have
\begin{multline*}
E(\tilde R)
=\frac12\norm[\Big]{\nabla \Big(\sum_{j=1}^N \tilde R_{j,1} \Big)}_2^2+\frac m2\norm[\Big]{ \sum_{j=1}^N \tilde R_{j,1} }_2^2+\frac12\norm[\Big]{\sum_{j=1}^N \tilde R_{j,2} }_2^2-\frac{1}{p+1}\norm[\Big]{ \sum_{j=1}^N \tilde R_{j,1} }_{p+1}^{p+1}\\
=\sum_{j=1}^N\left(\frac12\norm{\nabla  \tilde R_{j,1} }_2^2+\frac m2\norm{ \tilde R_{j,1} }_2^2+\frac12\norm{\tilde R_{j,2} }_2^2-\frac{1}{p+1}\norm{  \tilde R_{j,1} }_{p+1}^{p+1}\right)
+O(e^{-3\alpha \sqrt{m-\tilde \omega_\star^2}v_\star t})
\end{multline*}
where $\tilde \omega_\star=\max\{|\tilde \omega_j|; j=1,\dots,N\}$. In short, we have
\[
E(\tilde R)=\sum_{j=1}^N E(\tilde R_j)+O(e^{-3\alpha \sqrt{m-\tilde \omega_\star^2}v_\star t}).
\]
Now notice that 
$$
e^{-3\alpha \sqrt{m- \omega_{\star}^2}v_\star t}-e^{-3\alpha \sqrt{m-\tilde \omega_\star^2}v_\star t}=\frac{-\omega_\star e^{-3\alpha \sqrt{m-\omega_{\star}^2}v_\star t}(3\alpha v_{\star}t)}{\sqrt{m-\omega_{\star}^2}}(\tilde \omega_{\star}-\omega_{\star})+o(|\tilde \omega_{\star}-\omega_{\star}|),
$$
such that, thanks to Lemma~\ref{lem:claim3}, we get 
$$
E(\tilde R)=\sum_{j=1}^N E(\tilde R_j)+O(e^{-3\alpha \sqrt{m-\omega_\star^2}v_\star t}).
$$
Using similar arguments we have
\begin{align*}
E'(\tilde R)\Upsilon&=\sum_{j=1}^N E'(\tilde R_j)\Upsilon+O(e^{-3\alpha \sqrt{m-\omega_\star^2}v_\star t}),\\
\dual{E''(\tilde R)\Upsilon}{\Upsilon}&=\sum_{j=1}^N \dual{E''(\tilde R_j)\Upsilon}{\Upsilon}+O(e^{-3\alpha \sqrt{m-\omega_\star^2}v_\star t}).
\end{align*}
The proof follows the same steps for the localized charges and momenta: we have
\begin{align*}
Q_j(U)&=\sum_{j=1}^N \left( Q(\tilde R_j)+Q'(\tilde R_j)\Upsilon+\frac12\dual{Q_j''(\tilde R_j)\Upsilon}{\Upsilon}\right)+O(e^{-3\alpha \sqrt{m-\omega_\star^2}v_\star t}),
\\
P_j(U)&=\sum_{j=1}^N \left( P(\tilde R_j)+P'(\tilde R_j)\Upsilon+\frac12\dual{P_j''(\tilde R_j)\Upsilon}{\Upsilon}\right)+O(e^{-3\alpha \sqrt{m-\omega_\star^2}v_\star t}).
\end{align*}
The second step consists in expanding $\tilde\omega_j$ around $\omega_j$ using Lemma~\ref{lem:claim3}.
Remembering that $\tilde R_j$ is a critical point of $E+\frac{\tilde \omega_j}{\gamma_j}Q+v_j\cdot P$, we infer
\[
E'(\tilde R_j)+\frac{\omega_j}{\gamma_j}Q'(\tilde R_j)+v_j\cdot P'(\tilde R_j)=\frac{\omega_j-\tilde\omega_j}{\gamma_j}Q'(\tilde R_j).
\]
From Lemma~\ref{lem:claim3},~\eqref{eq:bootstrap-assumption} and~\eqref{eq:Y-error}, it follows that
\[
\left|\frac{\omega_j-\tilde\omega_j}{\gamma_j}Q'(\tilde R_j)\Upsilon\right|\leq 
O(e^{-2\alpha\sqrt{m-\omega_\star^2}v_\star t})\norm{\Upsilon}_{H^1\times L^2}\leq O(e^{-3\alpha\sqrt{m-\omega_\star^2}v_\star t}).
\]
The only thing left to see is to remove the tildes corresponding to modulation. We have
\begin{multline*}
\sum_{j=1}^N\left(E(\tilde R_j)+\frac{\omega_j}{\gamma_j}Q(\tilde R_j)+v_j\cdot P_j(\tilde R_j)\right)=\\
\sum_{j=1}^N\bigg(E( R_j)+\frac{\omega_j}{\gamma_j}Q( R_j)+v_j\cdot P_j( R_j)+
O((\tilde\omega_j-\omega_j)^2)\bigg),
\end{multline*}
where we have used the fact that
\[
(\tilde\omega_j-\omega_j)(E'+\frac{\omega_j}{\gamma_j}Q'+v_j\cdot P')(R_j)\frac{\partial R_j}{\partial \omega}=0.
\]
Thanks to Lemma~\ref{lem:claim3} we have
$$
\sum_{j=1}^N |\tilde\omega_j-\omega_j|^2\leq O(e^{-4\alpha \sqrt{m- \omega_\star^2}v_\star t}).
$$
Gathering all these informations we get the desired result.
\end{proof}

\begin{ack}
J.B is supported  by  FIRB2012 `Dinamiche dispersive: analisi di Fourier e metodi variazionali' and  PRIN2009 `Metodi Variazionali e Topologici nello Studio di Fenomeni non Lineari', M.G  by the PRIN2009 grant `Critical Point Theory
and Perturbative Methods for Nonlinear Differential Equations', S.L.C by the french ANR project ESONSE. 
\end{ack}

\appendix

\section{Appendix}

\begin{lemma}[Rellich-Kondrachov in $H^s$]\label{lem:rellich}
Let $\Omega$ be a bounded open set, $s\geq 0$ and $\varepsilon>0$, and $u_n\in H^s(\R^d)$ be a bounded sequence such that $\text{ supp } u_n\subset \Omega$. Then there exists $u\in H^s$ such that  $\norm{u_n-u}_{H^{s-\varepsilon}}=o(1)$
\end{lemma}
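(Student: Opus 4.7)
The plan is to combine weak compactness in $H^s$ with a frequency-space splitting that exploits the common compact support. First, since $\{u_n\}$ is bounded in $H^s(\R^d)$, by weak compactness there exist $u\in H^s(\R^d)$ and a subsequence (still denoted $u_n$) with $u_n\rightharpoonup u$ weakly in $H^s$. Because $\mathrm{supp}\,u_n\subset\overline{\Omega}$ for every $n$, the same inclusion passes to the weak limit, so $\mathrm{supp}\,u\subset\overline{\Omega}$. Setting $v_n:=u_n-u$, it suffices to prove $\|v_n\|_{H^{s-\varepsilon}}\to 0$, where $v_n\rightharpoonup 0$ weakly in $H^s$, each $v_n$ is supported in $\overline{\Omega}$, and $\|v_n\|_{H^s}\le M$ uniformly.

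Next I would split the $H^{s-\varepsilon}$ norm, computed via the Fourier transform, at a cutoff $R>0$:
\[
\|v_n\|_{H^{s-\varepsilon}}^2
=\int_{|\xi|\le R}(1+|\xi|^2)^{s-\varepsilon}|\hat v_n(\xi)|^2\,d\xi
+\int_{|\xi|>R}(1+|\xi|^2)^{s-\varepsilon}|\hat v_n(\xi)|^2\,d\xi.
\]
The high-frequency tail is controlled uniformly in $n$ by
\[
\int_{|\xi|>R}(1+|\xi|^2)^{s-\varepsilon}|\hat v_n(\xi)|^2\,d\xi
\le (1+R^2)^{-\varepsilon}\|v_n\|_{H^s}^2
\le M^2(1+R^2)^{-\varepsilon},
\]
which, for any prescribed $\eta>0$, is smaller than $\eta/2$ once $R$ is chosen large enough.

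The low-frequency piece is where the compact support is used. For every fixed $\xi\in\R^d$,
\[
\hat v_n(\xi)=\int_{\overline{\Omega}}v_n(x)e^{-i\xi\cdot x}\,dx,
\]
and the function $x\mapsto e^{-i\xi\cdot x}\mathbf{1}_{\overline{\Omega}}(x)$ lies in $L^2(\R^d)\hookrightarrow H^{-s}(\R^d)$ (since $s\ge 0$). Weak convergence $v_n\rightharpoonup 0$ in $H^s$ therefore forces $\hat v_n(\xi)\to 0$ pointwise. Moreover, Cauchy--Schwarz together with the embedding $H^s\hookrightarrow L^2$ yields the uniform bound $|\hat v_n(\xi)|\le|\Omega|^{1/2}\|v_n\|_{L^2}\le C$. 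The integrand on $\{|\xi|\le R\}$ is thus pointwise vanishing and dominated by the constant $(1+R^2)^{s-\varepsilon}C^2$ on a set of finite measure, so dominated convergence gives that the low-frequency integral tends to $0$. Combined with the high-frequency bound, this yields $\|v_n\|_{H^{s-\varepsilon}}\to 0$.

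The argument is essentially routine; the only mildly subtle point is that weak $H^s$-convergence does translate into pointwise convergence of the Fourier transforms, and this works precisely because the common compact support makes each mode $e^{-i\xi\cdot x}\mathbf{1}_{\overline{\Omega}}$ a fixed element of $H^{-s}$.
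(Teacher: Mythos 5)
Your proof is correct and follows essentially the same route as the paper: weak compactness in $H^s$, a Plancherel splitting of the $H^{s-\varepsilon}$ norm at frequency $R$, a uniform high-frequency tail bound of order $(1+R^2)^{-\varepsilon}$, and dominated convergence on the low frequencies using the pointwise convergence and uniform boundedness of $\hat v_n$ coming from the common compact support. Your justification of the pointwise convergence of the Fourier transforms via the pairing with $e^{-i\xi\cdot x}\mathbf{1}_{\overline{\Omega}}\in L^2\hookrightarrow H^{-s}$ is in fact slightly more explicit than the paper's.
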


\begin{proof}
Let $u_n$ be a bounded sequence in $H^s(\Omega)$ weakly converging to $u\in H^s$,
we shall prove that, up to subsequences, $\norm{u_n-u}_{H^{s-\varepsilon}}=o(1)$.
By Plancherel identity we have
\begin{multline*}
\norm{u_n-u}_{H^{s-\varepsilon}}^2=\int_{|\xi|\leq R} (1+|\xi|^2)^{s-\varepsilon}|\hat u_n(\xi)-\hat u(\xi)|^2d \xi
+\int_{|\xi|> R}(1+|\xi|^2)^{s-\varepsilon}|\hat u_n(\xi)-\hat u(\xi)|^2d \xi.
\end{multline*}
We have
\begin{multline*}
\int_{|\xi|> R} (1+|\xi|^2)^{s-\varepsilon}|\hat u_n(\xi)-\hat u(\xi)|^2d \xi \leq \\
\frac{1}{(1+R^2)^{\varepsilon}}\int (1+|\xi|^2)^{s}|\hat u_n(\xi)-\hat u(\xi)|^2d \xi\leq  
\frac{2}{(1+R^2)^{\varepsilon}}\norm{u_n}_{H^s}^2.
\end{multline*}
in addition $\Omega$ is bounded and by weak convergence we have $\hat u_n(\xi) \rightarrow \hat u (\xi)$. To conclude it suffices to show that
\begin{equation}\label{eq:ts}
\int_{|\xi|\leq  R} (1+|\xi|^2)^{s-\varepsilon}|\hat u_n(\xi)-\hat u(\xi)|^2d \xi =o(1).
\end{equation}
Notice that 
\[
\norm{\hat u_n}_{L^\infty(\Omega)}\leq \norm{u_n}_{L^1(\Omega)}\leq 
\mu(\Omega)^{\frac 12} \norm{u_n}_{L^2(\Omega)}\leq \mu(\Omega)^{\frac 12}\norm{u_n}_{H^s}
\]
and hence  $(1+|\xi|^2)^{s-\varepsilon}|\hat u_n(\xi)-\hat u(\xi)|^2$ is dominated by $C(1+|R|^2)^{s-\varepsilon}$ such that
\eqref{eq:ts} holds. \\
Now, fix $\delta>0$ and choose $R>0$ and $N$ sufficiently large such that 
\[
\int_{|\xi|> R} (1+|\xi|^2)^{s-\varepsilon}|\hat u_n(\xi)-\hat u(\xi)|^2d \xi\leq \frac{\delta}{2},
\]
and for all $n\geq N$
\[
\int_{|\xi|\leq  R} (1+|\xi|^2)^{s-\varepsilon}|\hat u_n(\xi)-\hat u(\xi)|^2d \xi \leq \frac{\delta}{2},
\]
i.e $\norm{u_n-u}_{H^{s-\varepsilon}}\leq \delta.$
\end{proof}

\begin{lemma}[Interactions estimates]\label{lem:interactions-estimates}
There exists $f\in L^\infty_tL^1_x(\R,\R^d)\cap L^\infty_tL^\infty_x(\R,\R^d)$ such that if $j\neq k$
\begin{gather*}
|R_jR_k|+|R_j\nabla R_k|+|\nabla R_j\nabla R_k|
+|R_j|\phi_k+|\nabla R_j|\phi_k
\leq Ce^{-3\alpha\sqrt{m-\omega_\star^2}v_\star t}f(t,x)
\\
\begin{split}
\left||R|^{p+1}-\sum_{l=1}^N|R_l|^{p+1}\right|
+\left||R|^{p-1}R-\sum_{l=1}^N|R_l|^{p-1}R_l\right|
+\left||R|^{p-1}-\sum_{l=1}^N|R_l|^{p-1}\right|
\\
\leq Ce^{-3\alpha\sqrt{m-\omega_\star^2}v_\star t}f(t,x).
\end{split}
\end{gather*}
\end{lemma}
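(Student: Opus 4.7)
The plan rests on two facts: for $j\ne k$ the soliton centers $v_jt+x_j$ separate linearly in $t$ at rate at least $v_\star$, and each profile is exponentially localized. From \eqref{eq:profile} together with \eqref{eq:exponential-decay} (and the analogous decay of $\nabla\varphi_{\omega_j}$, which follows from standard elliptic regularity applied to \eqref{eq:scalar-snlkg}), I first derive the pointwise bound $|R_j(t,x)|+|\nabla R_j(t,x)|\le Ce^{-\sqrt{\mu}\,|x-v_jt-x_j|}$, valid for any fixed $\mu<m-\omega_\star^2$ and every $j$; the Lorentz reshuffle $x\mapsto x+(\gamma_j-1)x_{v_j}$ does not weaken this estimate because $|x+(\gamma_j-1)x_{v_j}|\ge|x|$.

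\textbf{Two-center products.} For $R_jR_k$, $R_j\nabla R_k$ and $\nabla R_j\nabla R_k$ with $j\ne k$, I split each exponential factor in half and invoke the triangle inequality
\[
|x-v_jt-x_j|+|x-v_kt-x_k|\ge |(v_j-v_k)t+(x_j-x_k)|\ge v_\star t-C,
\]
valid for $t$ large. One half supplies the time-dependent prefactor $e^{-\sqrt{\mu}(v_\star t-C)/2}$, while the remaining halves multiply into a function dominated by the universal envelope $f(t,x):=C\sum_{\ell=1}^N e^{-\sqrt{\mu}\,|x-v_\ell t-x_\ell|/2}$, which visibly belongs to $L^\infty_tL^1_x\cap L^\infty_tL^\infty_x$. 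Choosing $\alpha=\alpha(d,N)$ small enough that $6\alpha\sqrt{m-\omega_\star^2}\le\sqrt{\mu}$ yields the bound with the required exponent.

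\textbf{Cutoff interactions.} For $|R_j|\phi_k$ and $|\nabla R_j|\phi_k$ with $j\ne k$, I exploit the geometry of the partition of unity. By construction $\phi_k$ is supported inside the strip where $x^1$ lies within $O(\sqrt t)$ of $[m_kt,m_{k+1}t]$ (with the obvious modification when $k=1$ or $k=N$). Using the ordering $v_1^1<\dots<v_N^1$ and \eqref{eq:tilde-alpha}, one checks that $|x^1-v_j^1t-x_j^1|\ge c\,v_\star t$ uniformly on $\operatorname{supp}\phi_k$ for every $j\ne k$ and $t$ large. Combined with $|x-v_jt-x_j|\ge|x^1-v_j^1t-x_j^1|$ and the pointwise bound on $|R_j|$, the same split-exponential trick produces the desired time factor times a term dominated by $f$.

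\textbf{Nonlinear differences and main obstacle.} For the three nonlinear quantities I invoke elementary pointwise algebraic inequalities of the type $\bigl||{\textstyle\sum_\ell a_\ell}|^{p+1}-\sum_\ell|a_\ell|^{p+1}\bigr|\le C\sum_{j\ne k}(|a_j|^p|a_k|+|a_j||a_k|^p)$, and analogous versions for $|{\cdot}|^{p-1}(\cdot)$ and $|{\cdot}|^{p-1}$, each obtained by Taylor expansion and induction on $N$. Every cross-term $|R_j|^p|R_k|$ still decays exponentially in both $|x-v_jt-x_j|$ and $|x-v_kt-x_k|$ (taking $p$-th powers merely rescales the decay rate), so the argument of the second paragraph applies verbatim after slightly shrinking $\alpha$. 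The only genuine difficulty is the bookkeeping: one must fix a single $\alpha=\alpha(d,N)$ and a single envelope $f$ that work for every term simultaneously. This is arranged by selecting $\mu$ slightly below $m-\omega_\star^2$, then $\alpha$ small enough that $3\alpha\sqrt{m-\omega_\star^2}v_\star$ is dominated by every decay rate appearing above (including those degraded by the $p$-th power in the nonlinear step); the geometric constant $\tilde\alpha$ of \eqref{eq:tilde-alpha} enters only through the cutoff paragraph.
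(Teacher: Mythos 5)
Your proposal is correct and follows essentially the same route as the paper: pointwise exponential decay of the (Lorentz-boosted) profiles, a split of the exponentials exploiting the separation $|(v_j-v_k)t|\geq v_\star t$ of the centers to extract the time factor, the support geometry of $\phi_k$ in the $e_1$-direction (via $\tilde\alpha$) for the $|R_j|\phi_k$ terms, and an elementary algebraic inequality reducing the nonlinear differences to pairwise cross terms. The only differences are cosmetic (triangle inequality in place of the paper's change of variables, a traveling-exponential envelope $f$, and your more explicit bookkeeping of $\tilde\alpha$ and of the constraint on $\alpha$).
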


\begin{proof}
We start proving that there exists  $f\in L^\infty_tL^1_x(\R,\R^d)\cap L^\infty_tL^\infty_x(\R,\R^d)$ such that if $j\neq k$ 
$$|R_jR_k|\leq  Ce^{-3\alpha\sqrt{m-\omega_\star^2}v_\star t}f(t,x).$$
Thanks to~\eqref{eq:exponential-decay}
(the Lorenz transform gives indeed only a contraction along the direction of propagation) we know that
\begin{gather*}
|R_j|\leq Ce^{-\frac 12\sqrt{m-\omega_j^2}|x-v_jt|}\leq Ce^{-\frac 12 \sqrt{m-\omega_{\star}^2}|x-v_jt|}\\
|R_k|\leq Ce^{-\frac 14 \sqrt{m-\omega_k^2}|x-v_jt|}\leq Ce^{-\frac 14 \sqrt{m-\omega_{\star}^2}|x-v_kt|}.
\end{gather*}
By a simple change of variable we get
$$|R_j||R_k|\leq Ce^{-\frac 12\sqrt{m-\omega_\star^2}|x|}e^{-\frac 14\sqrt{m-\omega_\star^2}|x-(v_k-v_i)t)|},$$
such that, thanks to the following inequality 
$$|x-(v_k-v_j)t|\geq |(v_k-v_j)t|-|x|\geq v_\star t-|x|$$
we conclude
$$|R_j||R_k|\leq Ce^{-\frac 14\sqrt{m-\omega_\star^2}|x|}e^{-\frac 14\sqrt{m-\omega_\star^2}v_\star t}.$$
Taking $3\alpha\leq \frac{1}{4}$ we get the desired estimate. The estimates for
$|R_j\nabla R_k|$ and $|\nabla R_j\nabla R_k|$ follow analogously.\\
Now we shall prove that if $j\neq k$
$$|R_j|\phi_k\leq  Ce^{-3\alpha\sqrt{m-\omega_\star^2}v_\star t}f(t,x).$$
Let us suppose without any lack of generality that $j<k-1$. Notice that
$$|R_j|\phi_k\leq C^{-\frac 12 \sqrt{m-\omega_\star^2}|x-v_jt|}\chi_{[\frac{1}{2}(v_{k-1}+v_{k})t-\sqrt{t}, \frac{1}{2}(v_{k+1}+v_{k})t+\sqrt{t}]}$$ 
that implies
$$|R_j|\phi_k\leq Ce^{-\frac 12 \sqrt{m-\omega_\star^2}|x-v_jt|}\chi_{[(v_{j}+v_{\star})t-\sqrt{t}, v_k t+\sqrt{t}]}.$$
By a simple change of variable we get
$$|R_j|\phi_k\leq Ce^{-\frac 12 \sqrt{m-\omega_\star^2}|x|}\chi_{[v_{\star}t-\sqrt{t}, (v_{k}-v_j)t+\sqrt{t}]}.$$
Now, for $t\geq \max\{\frac{4}{v_\star^2}, 1\}$, it follows
\begin{multline}
|R_j|\phi_k\leq Ce^{-\frac 14 \sqrt{m-\omega_{\star}^2}|x|}e^{-\frac 14 \sqrt{m-\omega_{\star}^2}|x|}\chi_{[\frac 12 v_{\star}t, (v_{k}-v_j+1)t]} 
\leq Ce^{-\frac 18 \sqrt{m-\omega_{\star}^2}v_\star t}e^{-\frac 14 \sqrt{m-\omega_{\star}^2}|x|}.
\end{multline}
Now for $\alpha<\frac{1}{24}$ we conclude
$$|R_j|\phi_k\leq Ce^{- 3\alpha \sqrt{m-\omega_{\star}^2}v_\star t}e^{-\frac 14 \sqrt{m-\omega_{\star}^2}|x|}.$$
The case $j\geq k-1$, $j\neq k$, follows identically as well as the estimates
concerning the gradient.
The second part of the lemma follows from the inequality
\begin{equation*}
(|a+b|^p-|a|^p-|b|^p)\leq C (|a||b|^{p-1}+|a|^{p-1}|b|) \ \text{ with } p>0
\end{equation*}
that derives from the elementary inequality
\begin{equation*}
(|1+t|^p-1-|t|^p)\leq C (|t|+|t|^{p-1}) \ \text{ with } p>0.
\end{equation*}
By arguing as before we get the desired estimates.
\end{proof}

\bibliographystyle{siam}
\bibliography{biblio}

%

\end{document}